\newcommand\ext{{\rm{Ext}}}
\newcommand\cds{S}
\newcommand\Gzero{\widehat{G}}
\newcommand\conv{{\rm conv}}
\newcommand\cG{{\mathcal G}}
\newcommand\eps{\varepsilon}
\newcommand\Prb{\mathbb{P}}
\newcommand\N{\mathbb{N}}
\newcommand\Q{\mathbb{Q}}
\newcommand\Z{\mathbb{Z}}
\newcommand\R{\mathbb{R}}
\newcommand\spanof[1]{\left<#1\right>}
\newcommand\lid{{\ell_\infty^d}}
\renewcommand{\le}{\leqslant}
\renewcommand{\ge}{\geqslant}
\title{Random Geometric Graphs and Isometries of Normed Spaces}
\author[P.~Balister]{Paul Balister}
\address{Department of Mathematical Sciences, University of Memphis, Memphis TN 38152, USA.}
\email{pbalistr@memphis.edu}
\author[B.~Bollob\'as]{B\'ela Bollob\'as}
\address{Department of Pure Mathematics and Mathematical Statistics,
Wilberforce Road, Cambridge CB3\thinspace0WB, UK; {\em and\/}
Department of Mathematical Sciences, University of Memphis, Memphis TN 38152,
USA; {\em and\/} London Institute for Mathematical Sciences, 35a South St.,
Mayfair, London W1K\thinspace2XF, UK.}
\email{bollobas@dpmms.cam.ac.uk}
\thanks{The first and second author are partially supported by
NSF grant DMS~1301614 and MULTIPLEX no.\ 317532.}
\author[K.~Gunderson]{Karen Gunderson}
\address{Heilbronn Institute for Mathematical Research, School of
  Mathematics, University of Bristol, Bristol BS8 1TW, UK}
\email{karen.gunderson@bristol.ac.uk}
\author[I.~Leader]{Imre Leader}
\address{Department of Pure Mathematics and Mathematical Statistics,
Wilberforce Road, Cambridge CB3 0WB, UK.}
\email{I.Leader@dpmms.cam.ac.uk}
\author[M.~Walters]{Mark Walters}
\address{School of Mathematical Sciences, Queen Mary University of London, London E1 4NS, UK.}
\email{m.walters@qmul.ac.uk}
\subjclass[2000]{05C63; 05C80, 46B04}
\begin{document}

\newtheorem{theorem}{Theorem}
\newtheorem{lemma}[theorem]{Lemma}
\newtheorem{proposition}[theorem]{Proposition}
\newtheorem{corollary}[theorem]{Corollary}
\newtheorem*{defn}{Definition}
\newtheorem*{theorem*}{Theorem}
\newtheorem*{lemma*}{Lemma}
\newtheorem*{conjecture}{Conjecture}
\newtheorem{question}{Question}
\newtheorem*{quotedresult}{Lemma A}
\theoremstyle{remark}
\newtheorem*{remark}{Remark}

\begin{abstract}
  Given a countable dense subset $S$ of a finite-dimensional normed
  space $X$, and $0<p<1$, we form a random graph on $S$ by joining,
  independently and with probability $p$, each pair of points at
  distance less than $1$. We say that $S$ is \emph{Rado} if any two such
  random graphs are (almost surely) isomorphic.

  Bonato and Janssen showed that in $\ell_\infty^d$ almost all $S$ are
  Rado. Our main aim in this paper is to show that $\ell_\infty^d$ is the
  unique normed space with this property: indeed, in every other space
  almost all sets $S$ are non-Rado. We also determine which spaces admit
  some Rado set: this turns out to be the spaces that have an
  $\ell_\infty$ direct summand. These results answer questions of Bonato and
  Janssen.

  A key role is played by the determination of which
  finite-dimensional normed spaces have the property that every
  bijective step-isometry (meaning that the integer part of distances
  is preserved) is in fact an isometry. This result may be of
  independent interest.
\end{abstract}
\maketitle
\section{Introduction}
In \cite{MR2854782} Bonato and Janssen introduced a new random
geometric graph model, defined as follows. Let $V$ be a
finite-dimensional normed space and let $\cds$ be a fixed countable dense
subset of $V$. Let $\Gzero=\Gzero(V,S)$ be the unit radius graph on
$\cds$: that is $x,y\in \cds$ are joined if $\|x-y\|<1$. Form
$G=G_p(V,S)$ by taking a random subgraph of $\Gzero(V,S)$ in which
each edge is chosen independently with probability~$p$, and let
$\cG_p(V,S)$ be the probability space of such graphs.

Motivated by the existence of the Rado graph, the unique infinite
graph in the Erd\H{o}s-R\'enyi random graph model, Bonato and Janssen
asked when the random graph in their model is almost surely unique up
to isomorphism.  We say a set $\cds$ is \emph{Rado} if the resulting
graph is almost surely unique up to isomorphism, and we say it is
\emph{strongly non-Rado} if any two such graphs are almost surely not
isomorphic. (Rather surprisingly, there are sets that are neither Rado
nor strongly non-Rado; see Theorem~\ref{t:precise-main-theorem} below.)

Bonato and Janssen proved that, for $V= \ell_\infty^d$ (the normed
space on $\R^d$ with norm defined by $\|(x_1,x_2,\dots, x_d)\|=\max_i
|x_i|$), almost all countable dense sets are Rado. [The exact
definition of `almost all' for countable dense sets is a little subtle
and we discuss it at the end of Section~\ref{s:notation}, but, for
now, we remark that the only property of `almost all' that we require
is that almost all sets contain no integer distances, and no integer
distances (or coincidences) in projections onto natural subspaces
such as the coordinate axes.]

In the same paper, Bonato and Janssen proved that all countable dense
sets in the Euclidean plane are strongly
non-Rado. Subsequently~\cite{MR3056370} they showed that almost all
countable dense sets in the plane with the hexagonal norm are strongly
non-Rado, and in~\cite{1310.4768B} that, for $\R^2$ with any norm that
is strictly convex or has a polygonal unit ball (apart from a
parallelogram), there are no Rado sets. They asked which normed spaces
contain a Rado set.

Our first result implies that $\ell_\infty^d$ is the only space for
which almost all countable dense sets are Rado.
\begin{theorem}\label{t:main-theorem}
  Let $V$ be a finite-dimensional normed space not isometric to
  $\ell_\infty^d$. Then, for any $0<p<1$, almost every countable dense
  set $\cds$ is strongly non-Rado.
\end{theorem}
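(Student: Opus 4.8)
The plan is to fix a ``generic'' countable dense set $S$ --- one with no integer distances and no integer coincidences among the coordinate projections, which is what ``almost every $S$'' means here --- to take two \emph{independent} samples $G,G'\in\cG_p(V,S)$, and to show that a.s.\ $G\not\cong G'$. There are three ingredients. (a) From a single random graph one can reconstruct the step-metric $(x,y)\mapsto\lfloor\|x-y\|\rfloor$ on $S$ in a way that is invariant under graph isomorphism, so any isomorphism $\phi\colon G\to G'$ is a bijective step-isometry of $(S,\|\cdot\|)$. (b) Since $V$ is not isometric to $\lid$, every such step-isometry is in fact an isometry, hence affine, and there are therefore only countably many of them. (c) A fixed affine bijection a.s.\ fails to be an isomorphism between two independent random subgraphs, so a union bound over the countably many candidates finishes the proof.

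For (a) I would first show that a.s., for all $x,y\in S$, one has $d_G(x,y)=\lceil\|x-y\|\rceil$ whenever $\|x-y\|\ge 1$, and $d_G(x,y)\le 2$ whenever $\|x-y\|<2$. The bound $d_G\ge d_{\Gzero}\ge\lceil\|x-y\|\rceil$ is just the triangle inequality; for the reverse one covers the segment from $x$ to $y$ by $\lceil\|x-y\|\rceil-1$ small balls, each meeting $S$ in an infinite set and consecutive ones at pairwise distance $<1$, and observes that in the $p$-random graph the set of vertices of the $(i{+}1)$st ball joined to $x$ through the first $i$ balls is a.s.\ infinite at each stage, so a path of the right length survives in $G$. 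This recovers $\lfloor\|x-y\|\rfloor$ for $\|x-y\|\ge1$; the one thing it misses is separating $\|x-y\|<1$ from $\|x-y\|\in[1,2)$ (both can give $d_G(x,y)=2$), and for this I would use the a.s.-valid, purely graph-theoretic criterion
\[
  \|x-y\|<1\quad\Longleftrightarrow\quad\forall z\in S\colon\ \bigl(xz\in E(G)\ \Rightarrow\ d_G(y,z)\le 2\bigr).
\]
For ``$\Rightarrow$'': if $\|x-y\|<1$ and $xz\in E(G)$ then $\|x-z\|<1$, so $\|y-z\|<2$ and hence $d_G(y,z)\le2$. For ``$\Leftarrow$'' (contrapositive): if $\|x-y\|>1$, then taking $z$ slightly beyond $x$ on the ray from $y$ through $x$ shows the open set $\{z:\|x-z\|<1,\ \|y-z\|>2\}$ is non-empty, so it meets $S$ in infinitely many points; a.s.\ one of them, $z$, has $xz\in E(G)$, and then $\|y-z\|>2$ forces $d_G(y,z)\ge3$. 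Since the right-hand side refers only to the graph, any isomorphism $\phi\colon G\to G'$ preserves ``$\|x-y\|<1$'', hence ``$\|x-y\|\ge1$'', and then (by the formula for graph distance) preserves $\lceil\|x-y\|\rceil$ on such pairs; so $\phi$ preserves $\lfloor\|x-y\|\rfloor$ throughout, i.e.\ $\phi$ is a bijective step-isometry of $(S,\|\cdot\|)$.

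Ingredient (b) is where $V\not\cong\lid$ enters: one uses (indeed, must first establish) the structural fact referred to in the abstract, that in any finite-dimensional normed space other than $\lid$ a bijective step-isometry of a generic countable dense set must be distance-preserving --- possibly by first extending it to a step-isometry of all of $V$ and invoking the corresponding statement for $V$. Being isometric on the dense set $S$, such a $\phi$ extends to a surjective isometry of $V$, hence is affine by the Mazur--Ulam theorem; and an affine bijection with $\phi(S)=S$ is completely determined by the images of any $\dim V+1$ affinely independent points of $S$, each of which lies in $S$. So there are at most countably many bijective step-isometries of $S$. For (c), fix one such $\phi$. If $\phi=\mathrm{id}$, then $\phi$ being an isomorphism $G\to G'$ would force $E(G)=E(G')$, which has probability $0$ since $\Gzero$ has infinitely many edges and $G,G'$ are independent. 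If $\phi\ne\mathrm{id}$, then $\phi$, being a non-identity affine map, moves a non-empty open subset of $V$, hence infinitely many edges of $\Gzero$; thus ``$\phi\colon G\to G'$ is an isomorphism'' entails infinitely many \emph{mutually independent} events of the form ``$e\in E(G)\iff\phi(e)\in E(G')$'', each of probability $p^2+(1-p)^2<1$, so again the probability is $0$. A union bound over the countably many candidates $\phi$ shows that a.s.\ no isomorphism $G\to G'$ exists, so $S$ is strongly non-Rado.

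The main obstacle is ingredient (b): the structural theorem that outside $\lid$ every bijective step-isometry is an isometry, together with the passage from step-isometries of the dense set $S$ to step-isometries of $V$. Ingredient (a) requires some care with the probabilistic estimates but is clean given the criterion above, and (c) is routine once the candidate set is countable. It is instructive that in $\lid$ itself part (b) genuinely fails --- the cube permits ``coordinatewise'' step-isometries, so the candidate set is uncountable --- and this is precisely why $\lid$ is Rado rather than strongly non-Rado.
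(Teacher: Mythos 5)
Your ingredients (a) and (c) are sound and match the paper's strategy: (a) is Bonato--Janssen's Lemma~\ref{l:bonato-janssen} (your graph-theoretic criterion for $\|x-y\|<1$ is a reasonable way to handle the extra check), and (c) is exactly the countable union bound in Lemma~\ref{l:countable=not-isomorphic}. The problem is ingredient (b), where you assert the wrong structural theorem. It is \emph{not} true that in every finite-dimensional normed space other than $\lid$ a bijective step-isometry must be an isometry. The correct statement (Theorem~\ref{t:step-isometry}) is that a step-isometry factorises over the $\ell_\infty$-decomposition $V=(U\oplus\lid)_\infty$ as $f_U\oplus f_\lid$, where only the $U$-part is forced to be an isometry; the $\lid$-part can be an arbitrary step-isometry of $\lid$. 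So the dividing line for ``step-isometry $\Rightarrow$ isometry'' is $d=0$, not $V\not\cong\lid$. Concretely, $V=(\ell_2^2\oplus\R)_\infty$ is not isometric to $\ell_\infty^3$, yet the identity on $\ell_2^2$ crossed with $x\mapsto\lfloor x\rfloor+g(x-\lfloor x\rfloor)$ is a non-isometric step-isometry of $V$; such a map can distort distances between points of $S$ whose separation is dominated by the $\R$-component. Your claim that the candidate isomorphisms are isometries, hence affine, hence countably many, therefore breaks down precisely in the case $d>0$, $U\neq\{0\}$.

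The gap is repairable, and this is what the paper does in part~(\ref{e:iii}) of Theorem~\ref{t:precise-main-theorem}: for almost every $S$ no two points share the same $U$-component, so each fibre over $U$ meets $S$ in at most one point. Then $f_U$ (which \emph{is} an isometry, hence affine, hence determined by countably many choices of images of an affine basis drawn from $S$) already determines $f$ on all of $S$, because $f(s)$ must be the unique point of $S$ in the fibre over $f_U(u)$. So the candidate set is still countable --- but the countability comes from the genericity of $S$ interacting with the factorisation, not from $f$ being an isometry. Without this extra step your argument only covers the case $d=0$ (part~(\ref{e:ii}) of the paper), and misses spaces such as $(\ell_2^2\oplus\R)_\infty$ that are covered by the theorem.
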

\noindent%

Theorem~\ref{t:main-theorem} shows what happens for `typical'
countable dense sets~$\cds$, but leaves open the possibility of
exceptional cases. Our second result,
Theorem~\ref{t:precise-main-theorem} below, is a refinement of
Theorem~\ref{t:main-theorem} that answers the question of Bonato and
Janssen, and, in fact, describes the precise situation in each normed
space.

Before stating the theorem, we need the following fact about
finite-dimensional normed spaces, which roughly says that any such
space contains a unique maximal $\lid$ subspace embedded in an
$\ell_\infty$ fashion. The precise statement is that, for any
finite-dimensional normed space $V$, there exists a unique maximal
subspace $W$ isometric to $\lid$ for some $d$, such that there is a
subspace $U$ with $V=U\oplus W$ and $\|u+w\|=\max(\|u\|,\|w\|)$ for
all $u\in U$ and $w\in W$. We prove this result in
Section~\ref{s:l-inf-decomp}. This decomposition is useful since, in
essence, the complicated behaviour can only occur on the
$\ell_\infty^d$ part. We call this decomposition the
\emph{$\ell_\infty$-decomposition} and write it as $V=(U\oplus
\ell_\infty^d)_\infty$.

We are now ready to state the main result of the paper.

\begin{theorem}\label{t:precise-main-theorem}
  Let $V$ be a normed space with $\ell_\infty$-decomposition $(U\oplus
  \ell_\infty^d)_\infty$ as above, and let $0<p<1$. Then
  \begin{enumerate}[(i)]
    \item\label{e:i} If $V= \ell_\infty^d$ (i.e., $U={0}$ in the
    $\ell_\infty$-decomposition), then almost all countable dense
    sets~$\cds$ are Rado, but there exist countable dense sets which
    are strongly non-Rado.  Additionally, there exist countable dense
    sets $S$ for which the probability that two graphs $G,G'\in
    \cG_p(V,S)$ are isomorphic lies strictly between $0$ and $1$.
    \item\label{e:ii} If $d=0$ (i.e., $V=U$), then \emph{all}
    countable dense sets $\cds$ are strongly non-Rado.
    \item\label{e:iii} If $d>0$ and $U\not=\{0\}$ then, almost all
    countable dense sets $\cds$ are strongly non-Rado, but there exist
    countable dense sets $\cds$ which are Rado. Additionally, there
    exist countable dense sets for which the probability that two
    graphs $G,G'\in \cG_p(V,S)$ are isomorphic lies strictly between
    $0$ and $1$.
  \end{enumerate}
\end{theorem}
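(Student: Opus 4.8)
The plan is to treat the three cases largely separately, using Theorem~\ref{t:main-theorem}, the uniqueness of the $\ell_\infty$-decomposition, and the step-isometry rigidity result announced in the introduction as black boxes.

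\textbf{The two easy halves.} Part~\ref{e:i} asserts in particular that almost all $\cds\subseteq\lid$ are Rado, which is precisely the theorem of Bonato and Janssen; I would quote it, or re-derive it by the back-and-forth argument used below for part~\ref{e:iii}, whose geometric core is that in $\lid$ a family of unit balls --- translates of the cube $(-1,1)^d$ --- has Helly number~$2$, so that a pattern of prescribed adjacencies is globally realisable as soon as it is pairwise realisable. For part~\ref{e:iii}, the statement that almost all $\cds$ are strongly non-Rado is immediate from Theorem~\ref{t:main-theorem}: since the $\ell_\infty$-decomposition is unique, $U\ne\{0\}$ forces $V\not\cong\lid$ for every $d$, so Theorem~\ref{t:main-theorem} applies verbatim. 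It remains to prove part~\ref{e:ii} in full and to produce the exceptional sets in parts~\ref{e:i} and~\ref{e:iii}.

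\textbf{Part~\ref{e:ii}.} Write $\Gzero^{(n)}=\Gzero^{(n)}(V,\cds)$ for the graph on $\cds$ with $x\sim y$ when $0<\|x-y\|<n$. The key point is a reconstruction lemma. For a fixed pair $x,y\in\cds$ with $\|x-y\|<2$, the open set $\{z:\|z-x\|<1\}\cap\{z:\|z-y\|<1\}$ is non-empty, hence meets $\cds$ in infinitely many points, each of which becomes a common neighbour of $x,y$ in $G$ with probability $p^2>0$, independently over distinct such $z$; while if $\|x-y\|\ge2$ these balls are disjoint and $x,y$ have no common neighbour at all. A union bound over the countably many pairs shows that almost surely $G$ determines $\Gzero^{(2)}$ as the graph of ``having a common neighbour''. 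Moreover, for every $k\ge1$ one has $\Gzero^{(2k)}=(\Gzero^{(2)})^{k}$ (the $k$-th graph power): the triangle inequality gives one inclusion, and inserting $k-1$ points of $\cds$ near the equally spaced points of the segment $[x,y]$ gives the other. Hence $G$ almost surely determines $\Gzero^{(2k)}$ for every $k$, so any isomorphism $\phi\colon G\to G'$ between members of $\cG_p(V,\cds)$ is, as a bijection $\cds\to\cds$, an automorphism of each $\Gzero^{(2k)}$; since $\Gzero^{(2k)}$ is exactly the ``distance $<k$'' graph of the rescaled space $(V,\tfrac12\|\cdot\|)$ on $\cds$, this says precisely that $\phi$ preserves $\lfloor\tfrac12\|x-y\|\rfloor$ for all pairs, i.e.\ $\phi$ is a bijective step-isometry of $(V,\tfrac12\|\cdot\|)$. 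As $d=0$ for $V$, and hence for $(V,\tfrac12\|\cdot\|)$, the step-isometry rigidity result gives that $\phi$ is the restriction of an isometry $\psi$ of $V$, so $\psi(\cds)=\cds$. By Mazur--Ulam such a $\psi$ is affine, hence determined by the images of a fixed affine basis contained in $\cds$, so only countably many isometries carry $\cds$ to $\cds$. For each such $\psi$ (including the identity), $\psi$ is an automorphism of $\Gzero$ and $\psi(G)$ is a subgraph of $\Gzero$; since $G'$ is an independent $p$-random subgraph of $\Gzero$ and $\Gzero$ has infinitely many edges (a ball of radius $\tfrac12$ meeting $\cds$ contains an infinite clique), $\Prb[G'=H]=0$ for every fixed graph $H$, in particular $\Prb[G'=\psi(G)]=0$ by independence. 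Summing over the countably many $\psi$, almost surely there is no isomorphism $G\to G'$.

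\textbf{Part~\ref{e:iii}: the exceptional sets, and the main obstacle.} When $d\ge1$, $(V,\tfrac12\|\cdot\|)$ has a nontrivial $\ell_\infty^d$ summand and hence a large supply of bijective step-isometries that are \emph{not} isometries --- the monotone rearrangements of the $\ell_\infty^d$-coordinate within unit cubes, already present for $\ell_\infty^1=\R$. The plan for the Rado set is to choose $\cds$ so that this family acts on $\cds$ richly enough to drive a back-and-forth: when matching finite induced subgraphs of $G$ and $G'$ and extending by one vertex, one uses the freedom in the $\ell_\infty^d$-coordinate (as in the pure $\lid$ argument) to realise the required adjacencies, keeping the $U$-coordinates consistent rather than using them to create room. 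Exhibiting an $\cds$ for which this works and proving that the back-and-forth closes up --- i.e.\ that the rigidity of the $U$-part, which powers Theorem~\ref{t:main-theorem}, can here be absorbed by the $\ell_\infty^d$-freedom --- is the crux of the whole theorem and the step I expect to be hardest. Granting such an $\cds$, the intermediate set is obtained by a local perturbation: adjoin a finite gadget whose configuration of distances is distinctive enough to be recognised inside every $\Gzero^{(2k)}$ (hence fixed setwise by any isomorphism) and whose induced subgraph in $G$ realises a prescribed edge set with some probability $\theta\in(0,1)$; arranging that this outcome, and only it, yields one isomorphism type while every other outcome yields a single different type (running the back-and-forth relative to the gadget), one gets $\Prb[G\cong G']=\theta^2+(1-\theta)^2\in(0,1)$.

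\textbf{The exceptional sets in part~\ref{e:i}.} The same gadget idea works inside $\lid$: from a Bonato--Janssen Rado set, adjoin a recognisable finite gadget whose induced subgraph in $G$ is not determined by $\cds$, with two conditional isomorphism types of probabilities $\theta$ and $1-\theta$, to obtain an $\cds$ with $\Prb[G\cong G']\in(0,1)$. For a strongly non-Rado $\cds\subseteq\lid$ one instead builds $\cds$ so that the only step-isometries of $\lid$ carrying $\cds$ to itself are isometries --- equivalently, so that $\cds$ admits no exotic cube-rearrangement symmetry, for instance a dense set whose cubical pieces are made mutually rigid --- whereupon the argument of part~\ref{e:ii} applies verbatim. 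Verifying that such a rigid dense set exists, given the abundance of step-isometries of $\lid$, is the secondary difficulty, which I expect to be considerably easier than the construction in part~\ref{e:iii}.
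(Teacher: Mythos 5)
The parts you actually prove are sound, and your part~(\ref{e:ii}) is a genuine (if minor) variant of the paper's route: instead of quoting the Bonato--Janssen lemma (Lemma~\ref{l:bonato-janssen}), which already gives that isomorphisms are step-isometries in the \emph{original} norm, you reconstruct the ``distance $<2$'' graph from common neighbours and take graph powers, landing on step-isometries of the rescaled norm $\tfrac12\|\cdot\|$; since rescaling does not change the $\ell_\infty$-decomposition, this feeds into the rigidity theorem equally well. Two small caveats there: you silently use the fact that a step-isometry of $S$ extends to one of $V$ (Proposition~\ref{p:extends}), and the final ``almost surely not isomorphic'' conclusion needs the remark that the isomorphism event is analytic, hence product-measurable, before Fubini applies. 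A more substantive caveat concerns the ``almost all'' clause of~(\ref{e:iii}): quoting Theorem~\ref{t:main-theorem} is circular relative to this paper, since Theorem~\ref{t:main-theorem} is itself deduced from Theorem~\ref{t:precise-main-theorem}. The intended argument is short but must be run: for a set whose fibres over $U$ are singletons, $f=f_U\oplus f_{\lid}$ with $f_U$ affine and determined by countably many choices, and $f_U$ determines all of $f$ because each fibre meets $S$ at most once; then Lemma~\ref{l:countable=not-isomorphic} applies.

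The genuine gaps are the existence statements, which you describe as plans rather than prove. First, the Rado sets in~(\ref{e:iii}) and the intermediate-probability sets in both~(\ref{e:i}) and~(\ref{e:iii}) all rest on a back-and-forth lemma that you explicitly defer as ``the crux \dots\ the step I expect to be hardest.'' The paper supplies it as Lemma~\ref{l:bf}: write $V=(U'\oplus\R)_\infty$, take $S$ dense in each fibre $\{s\}\times\R$ over a countable dense $S_U\subset U'$, with no two points of $S$ differing by an integer in their $\R$-component; that last condition is what makes the extension step close up, since each new point's $\R$-component then falls strictly between two consecutive already-placed points and the required adjacencies are realisable in the resulting open interval. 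The intermediate-probability sets then come from the concrete gadget $S_0=\{0,u,3u/2,5u/2\}$ (its two unit-distance pairs force $S_0\mapsto S_0$, and its unique potential edge $\{u,3u/2\}$ yields isomorphism probability $p^2+(1-p)^2$) combined with the conditional form of Lemma~\ref{l:bf}; your sketch names this idea but constructs neither the gadget nor the verification. Second, for the strongly non-Rado sets in $\lid$ you propose a dense set rigid enough that its only step-isometries are isometries, and you leave the existence of such a set unverified; this is harder than necessary. The paper instead takes $S=S'\times\Q$ and applies part (2) of Lemma~\ref{l:countable=not-isomorphic}: one does not need the step-isometries of $S$ to be countable (they are not), only their induced actions on the discrete subset $T=\{s'\}\times(\Z\cup(\Z+\tfrac12))$ of a single fibre, which by Theorem~\ref{t:l_infinity} are determined by $\sigma$, $\eps$ and the images of two points, hence countable. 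Without these two ingredients the theorem's existence claims remain unproven.
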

As we mentioned above, the typical case in~(\ref{e:i}) was proved by
Bonato and Janssen. In fact they proved more: they showed that the
graph is independent of $S$. More precisely, they showed that for
almost all countable dense sets $S$ and $S'$, and any $p,p'\in(0,1)$,
two graphs $G\in \cG_p(\ell_\infty^d,S)$ and $G'\in
\cG_{p'}(\ell_\infty^d,S')$ are almost surely isomorphic. Of course,
Theorem~\ref{t:precise-main-theorem} shows that this does not hold for
other normed spaces, as Parts~(\ref{e:ii}) and~(\ref{e:iii}) show that,
for almost all sets $S$, the probability that $G$ is isomorphic to any
particular graph is zero.

We shall make use of a key lemma of Bonato and Janssen that shows that
any graph isomorphism must induce an approximate isometric action on
$\cds$.
\begin{defn}
  Let $A\subseteq V$.  A \emph{step-isometry} on $A$ is a bijective
  function $f\colon A\to A$ such that, for all $x,y\in A$,
\[
 \left\lfloor \|x-y\|\right\rfloor=\left\lfloor \|f(x)-f(y)\|\right\rfloor.
\]
\end{defn}
We remark that Bonato and Janssen's definition was slightly different:
they did not require the function to be a bijection. However, all our
maps will be bijective, and many of the results we state only hold for
bijective step-isometries, so we use the above definition. Note that
we use `isometry' to mean any distance preserving map; in particular, it
need not be surjective.

Bonato and Janssen~\cite{MR2854782} proved the following
lemma.
\begin{lemma}[Bonato and Janssen~\cite{MR2854782}]\label{l:bonato-janssen}
  Suppose $G\in\cG_p(V,S)$. Then, almost surely, for every pair of
  points $x,y\in S$ and every $k\in \N$ with $k\ge 2$ we have
  $\|x-y\|<k$ if and only if $d_G(x,y)\le k$.

  In particular, for almost all graphs $G,G'$ in $\cG_p(V,S)$, every
  function $f\colon \cds\to \cds$ inducing an isomorphism of the
  graphs is a step-isometry on $\cds$.
\end{lemma}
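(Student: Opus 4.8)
The plan is to prove the two directions of the displayed equivalence separately and then deduce the step-isometry consequence.

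\emph{The easy direction.} Fix $x,y\in\cds$ and $k\ge 2$. If $d_G(x,y)\le k$, take a path $x=z_0z_1\cdots z_m=y$ in $G$ with $m\le k$. Every edge of $G$ is an edge of $\Gzero$, so $\|z_{i-1}-z_i\|<1$ for each $i$, and hence $\|x-y\|\le\sum_{i=1}^m\|z_{i-1}-z_i\|<m\le k$ by the triangle inequality. This holds for every subgraph $G\subseteq\Gzero$, deterministically (and for every $k\ge 1$); no randomness is needed.

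\emph{The hard direction.} Fix $x,y$ with $\|x-y\|<k$. Along the segment $[x,y]$ set $p_i=x+(i/k)(y-x)$ for $0\le i\le k$, so $p_0=x$, $p_k=y$ and $\|p_{i-1}-p_i\|=\|x-y\|/k<1$. Choose $\delta>0$ with $\|x-y\|/k+2\delta<1$; then any point within $\delta$ of $p_{i-1}$ and any point within $\delta$ of $p_i$ are at distance $<1$, hence adjacent in $\Gzero$. Each ball $B(p_i,\delta)$ with $1\le i\le k-1$ is a nonempty open set, so by density it contains infinitely many points of $\cds$. Pick $M$ pairwise-disjoint tuples $(z_1^{(j)},\dots,z_{k-1}^{(j)})$ with $z_i^{(j)}\in\cds\cap B(p_i,\delta)$, all internal vertices distinct over $j=1,\dots,M$. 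Each candidate path $x\,z_1^{(j)}\cdots z_{k-1}^{(j)}\,y$ lies in $\Gzero$, and these $M$ paths are pairwise edge-disjoint (their internal vertices are distinct), so the events that they appear in $G$ are independent, each of probability $p^k$. Thus the probability that none appears is $(1-p^k)^M\to 0$, so almost surely $d_G(x,y)\le k$. Here the hypothesis $k\ge 2$ is exactly what provides at least one ``middle'' layer of infinitely many vertices; for $k=1$ there is only the single potential edge $xy$, present with probability $p<1$, and the equivalence genuinely fails.

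\emph{Conclusion and the step-isometry.} Taking a union over the countably many triples $(x,y,k)$ with $x,y\in\cds$ and $k\ge 2$ gives the displayed claim almost surely. For the ``in particular'' part, if $f$ induces an isomorphism $G\to G'$ then $d_G(x,y)=d_{G'}(f(x),f(y))$ and $xy\in E(G)\iff f(x)f(y)\in E(G')$. Applying the displayed equivalence to both $G$ and $G'$ (each almost surely good) yields $\|x-y\|<k\iff\|f(x)-f(y)\|<k$ for every integer $k\ge 2$, which already determines $\lfloor\|x-y\|\rfloor$ whenever it is at least $2$. To cover the values $0$ and $1$, observe that on a further almost-sure event one has, for all $x,y$, that $\|x-y\|<1$ iff every $G$-neighbour $z$ of $x$ satisfies $d_G(z,y)\le 2$: the forward implication is the triangle inequality together with the displayed claim, while if $\|x-y\|>1$ then $B(x,1)\setminus\overline{B(y,2)}$ is a nonempty open set (take $z$ just beyond $x$ on the ray from $y$ through $x$), hence meets $\cds$ infinitely often and so contains a $G$-neighbour of $x$ almost surely, and any such $z$ has $\|z-y\|>2$, so $d_G(z,y)\ge 3$. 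This characterisation is graph-invariant and transports through $f$, giving $\|x-y\|<1\iff\|f(x)-f(y)\|<1$ as well. Hence $\{k\in\N:\|x-y\|<k\}=\{k\in\N:\|f(x)-f(y)\|<k\}$, and since the least element of this set equals $\lfloor\|x-y\|\rfloor+1$, we get $\lfloor\|x-y\|\rfloor=\lfloor\|f(x)-f(y)\|\rfloor$, i.e.\ $f$ is a step-isometry.

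The main obstacle is the hard direction: one must set up the tube of balls and, crucially, extract genuinely edge-disjoint candidate paths so that the product bound $(1-p^k)^M$ on the failure probability is valid. A lesser point is the treatment of pairs at distance exactly $1$ in the final step, where the neighbourhood characterisation degenerates; this is handled using the paper's standing assumption that the relevant sets $\cds$ contain no integer distances.
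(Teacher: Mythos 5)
Your proof follows essentially the same route as the paper, which only sketches this argument while attributing it to Bonato and Janssen: the triangle inequality along a path for the easy direction, infinitely many internally disjoint candidate paths threaded through small balls along the segment $[x,y]$ for the hard direction (each present with probability $p^k$, so almost surely one occurs), and transport of the $k\ge 2$ equivalence through the isomorphism, plus a separate test for distances below $1$, for the step-isometry conclusion. The one point to note is that your neighbourhood test in the last step really characterises $\|x-y\|\le 1$ rather than $\|x-y\|<1$ (its forward implication also holds at distance exactly $1$), so pairs at exactly unit distance are not covered; this is precisely the residual case the paper itself only gestures at with ``the case of $\|x-y\|<1$ requires a small additional check,'' and while you rightly observe it disappears under the no-integer-distance assumption on $S$, be aware that the lemma as stated (and as invoked for arbitrary countable dense sets in part~(ii) of the main theorem) does not formally impose that assumption.
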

\noindent%
To see why this is true, first note that it is immediate that the
existence of a path of length $k$ implies that the norm distance is
less than $k$.  For the converse, they use the countable dense
property to construct infinitely many disjoint paths of length $k$
between $x$ and $y$ in $\Gzero$. Each of these has a positive chance
of occurring in $G$ so, almost surely, one of them does.

The second part now follows since an isomorphism between any two
graphs satisfying the first part must be a step-isometry. (The case of
$\|x-y\|<1$ requires a small additional check.)

This result shows that a natural step towards characterising the
possible graph isomorphisms is to characterise all the step-isometries
and, indeed, this will form the bulk of this paper. As we shall prove,
any step-isometry of $\cds$ extends to a step-isometry of $V$
itself. Thus, we want to characterise the step-isometries of $V$.

Observe that a step-isometry on $V$ need not be an isometry. Indeed,
consider the following example on $\R$. Let $g\colon[0,1)\to[0,1)$ be
any increasing bijection. Now define $f(x)=\lfloor x\rfloor
+g(x-\lfloor x\rfloor)$. It is easy to see that this is a
step-isometry but not an isometry (unless $g$ is the identity
function).

This example extends naturally to $\ell_\infty^d$: we can do the above
independently in each coordinate.  However, the following result
shows this is essentially the only example.

We need one piece of notation first. If $V=U\oplus W$ is a vector
space and $f\colon V\to V$, then we say $f$ \emph{factorises over the
  decomposition} if there exist $f_U\colon U\to U$ and $f_W\colon W\to
W$ such that $f(u+w)=f_U(u)+f_W(w)$ for all $u\in U$ and $w\in W$. We
write $f=f_U\oplus f_W$.
\begin{theorem}\label{t:step-isometry}
  Let $V$ be a finite-dimensional normed space with
  $\ell_\infty$-decomposition $V= (U\oplus \ell_\infty^d)_\infty$, and
  let $f\colon V\to V$ be a step-isometry. Then $f$ factorises over
  the decomposition as $f=f_U\oplus f_{ \ell_\infty^d}$, where $f_U$
  is a bijective isometry of $U$ and $f_{ \ell_\infty^d}$ is a
  step-isometry of $\ell_\infty^d$.
\end{theorem}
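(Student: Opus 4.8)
First I would record an easy strengthening of the step-isometry hypothesis. Since $f$ is a bijection with $\lfloor\|x-y\|\rfloor=\lfloor\|f(x)-f(y)\|\rfloor$, it carries each open ball $B(x,n)=\{y:\|x-y\|<n\}$ onto $B(f(x),n)$ for all $x\in V$ and $n\in\N$; and for positive integers $n\le m$ one has $B(x,n)\subseteq B(y,m)$ exactly when $\|x-y\|\le m-n$ (using $\sup_{\|z\|<n}\|x-y+z\|=\|x-y\|+n$). Applying $f$ to such inclusions shows that $f$ also preserves every relation $\|x-y\|\le k$ with $k\in\N$; hence $f$ preserves integer distances exactly, and carries every closed ball, and every sphere, of integer radius onto the corresponding object about the image point. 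Finally, composing $f$ with a translation — an isometry which for $V=U\oplus\ell_\infty^d$ splits as a translation of $U$ followed by one of $\ell_\infty^d$ — I may assume $f(0)=0$.

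The heart of the argument is the case $d=0$: a bijective step-isometry of a finite-dimensional space $V$ with no $\ell_\infty$ direct summand is an isometry. Here the plan is to prove that $f$ preserves collinearity. Granting that, and noting that $\dim V\ge2$ (a one-dimensional space is isometric to $\ell_\infty^1$, so a space with no $\ell_\infty$ summand has dimension $0$ — the case $V=\{0\}$ being trivial — or at least $2$), the Fundamental Theorem of Affine Geometry (a collinearity-preserving bijection of $\R^n$ with $n\ge2$ is affine) shows $f$ is affine; and an affine step-isometry is an isometry, since it preserves the relation $\|x-y\|\le1$ and so its linear part carries the unit ball onto itself. The tool for collinearity-preservation is that $f$ carries any pair $x,z$ with $\|x-z\|=2$, together with its \emph{midpoint set} $\overline{B}(x,1)\cap\overline{B}(z,1)$ — which, since $\|x-z\|=2$, equals $\{y:\|x-y\|=\|y-z\|=1\}$ — onto the pair $f(x),f(z)$ (at distance $2$, as integer distances are preserved) together with its midpoint set. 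When $V$ is strictly convex this midpoint set is the single point $(x+z)/2$, so $f(m+u)+f(m-u)=2f(m)$ for every $m\in V$ and every unit vector $u$, and working this identity across directions and scales — which is where $\dim V\ge2$ is needed — one deduces that $f$ preserves collinearity.

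The real difficulty is the general norm, and it is there that the hypothesis that $V$ has no $\ell_\infty$ summand is used. Now the midpoint set of a distance-$2$ pair is a genuine convex body rather than a point, so the identity above weakens to the bare statement that $f$ maps the midpoint set of each distance-$2$ pair onto that of its image, and one must analyse the faces of the unit ball and the way the midpoint sets at different pairs interlock. The only mechanism known to produce a nonaffine step-isometry is a cube factor that splits off as an $\ell_\infty$ summand — on it, because $\lfloor\max(a,b)\rfloor=\max(\lfloor a\rfloor,\lfloor b\rfloor)$, one may move a single coordinate by an arbitrary one-dimensional step-isometry and thereby bend lines — and the substance of this case is to prove that there is no other mechanism, i.e.\ that $f$ must be affine whenever $V$ has no $\ell_\infty$ summand. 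I would expect this to require treating the strictly convex, the polyhedral, and the mixed norms separately, and to be the main obstacle of the whole theorem.

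For general $V$ I would then induct on the number $d$ of $\ell_\infty$-factors, the base case being the above. For $d\ge1$, write $V=(V'\oplus\R)_\infty$ with $V'=(U\oplus\ell_\infty^{d-1})_\infty$; by maximality of the $\ell_\infty^d$ factor in $V$, the space $V'$ has exactly $d-1$ $\ell_\infty$-factors, with the same $U$, so the inductive hypothesis applies to it. It then suffices to show that a step-isometry $f$ of $V$ with $f(0)=0$ factorises as $f=f_{V'}\oplus f_\R$ with $f_{V'}$ a step-isometry of $V'$: for by induction $f_{V'}=f_U\oplus f_{\ell_\infty^{d-1}}$ with $f_U$ an isometry of $U$ and $f_{\ell_\infty^{d-1}}$ a step-isometry, whence $f=f_U\oplus(f_{\ell_\infty^{d-1}}\oplus f_\R)$ with $f_{\ell_\infty^{d-1}}\oplus f_\R$ a step-isometry of $\ell_\infty^d$ (the factors being automatically bijective step-isometries once the product form is known, by restricting $f$ to a fibre). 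To obtain the factorisation one shows — by an argument parallel to the case $d=0$, now applied to the cosets of $U$, which inherit a norm still free of $\ell_\infty$ summands — that $f$ carries each coset of $U$ onto a coset of $U$, so restricts on each to a step-isometry of $U$; a short computation with the $\max$-norm then forces these restrictions to share a single linear part and a constant $U$-offset — any shear mixing the two factors, say $(u,t)\mapsto(u+tm,t)$ with $m\ne0$, already fails to be a step-isometry, since taking $u$ a positive multiple of $m$ makes $\|u+tm\|=\|u\|+|t|\,\|m\|$ overshoot an integer that $\|u\|$ does not — which yields the product form. Thus the one genuinely hard ingredient is the rigidity of the case $d=0$.
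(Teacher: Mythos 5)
Your proposal correctly reduces the theorem to a rigidity statement, but it does not prove that statement, and that statement is the entire content of the theorem. The preliminary reductions are fine: the ball-inclusion argument showing that a bijective step-isometry preserves the relations $\|x-y\|\le k$ for integer $k$ is correct (it matches Corollary~\ref{c:closed-balls}), as is the normalisation $f(0)=0$. The problem is the ``heart of the argument'': the claim that a bijective step-isometry of a space with no $\ell_\infty$ summand is affine. For strictly convex norms you sketch a midpoint-set identity and then say that ``working this identity across directions and scales\dots one deduces that $f$ preserves collinearity'' --- that deduction is not supplied. For general norms you state outright that the midpoint sets are convex bodies rather than points, that one ``must analyse the faces of the unit ball and the way the midpoint sets at different pairs interlock,'' and that you ``would expect this to require treating the strictly convex, the polyhedral, and the mixed norms separately.'' That is a description of the difficulty, not a proof; no mechanism is given that actually rules out non-affine behaviour when the ball has flat faces but no $\ell_\infty$ summand (e.g.\ the hexagonal norm in the plane). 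The same gap reappears in your inductive step, where the key assertion that $f$ carries each coset of $U$ onto a coset of $U$ is justified only ``by an argument parallel to the case $d=0$.''

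For comparison, the paper's proof supplies exactly the missing mechanism, and not by a case split on the type of norm. It first shows (Sections~\ref{s:extreme}--\ref{s:lattice}) that $f$ maps extreme points of the unit ball to extreme points --- using the metric characterisation $B(0,1)\cap B(2x,1)=\{x\}$, which is preserved by step-isometries --- and is an additive isometry on the lattice $\Lambda$ they generate, so that after composing with a linear isometry one may assume $f$ fixes $\Lambda$ pointwise. The substitute for your midpoint sets is the extreme \emph{lines} of the ball: $[x,y]=B(0,1)\cap B(x+y,1)$ (Lemma~\ref{l:ext-lines=ball}), so every extreme line direction is a preserved direction (Proposition~\ref{p:ext-line=preserved-line}). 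The decisive step is then Lemma~\ref{l:min-dep}: any minimal linear dependence among at least three preserved directions forces $f$ to strongly fix their span. Section~\ref{s:complement=well-spanned} shows that the extreme line directions \emph{not} caught by such dependences are precisely the $\ell_\infty$-directions, which identifies the strongly fixed subspace with $U$. If you want to complete your outline you would need to prove your $d=0$ rigidity claim in full generality; the paper's route via extreme lines and linear dependences among preserved directions is the uniform argument that does this, and your midpoint-set observation is essentially the $n=1$ instance of its Lemma~\ref{l:ext-lines=ball} without the follow-up.
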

Thus, to obtain a full characterisation of the step-isometries of $V$,
we need to classify the step-isometries of $\ell_\infty^d$. The
following result does exactly that.
\begin{theorem}\label{t:l_infinity}
  Let $f$ be a step-isometry of $\ell_\infty^d$. Then there exists a
  permutation $\sigma$ of $[d]$, and
  $\eps=(\eps_1,\eps_2,\dots,\eps_d)\in\{-1,+1\}^d$, and, for each $i$,
  an increasing bijection $g_i\colon [0,1)\to [0,1)$, such that
  \[
  f\left(\sum_{i=1}^d\lambda_i e_i\right)-f(0)=
  \sum_{i=1}^d\left(g_i\left(\lambda_i-\lfloor\lambda_i\rfloor\right)
    +\lfloor\lambda_i\rfloor\right)\eps_ie_{\sigma(i)}.
\]
where $e_1,e_2,\dots,e_d$ is the standard basis of $\lid$.
\end{theorem}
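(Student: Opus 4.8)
The strategy is to work coordinate by coordinate, first pinning down how $f$ treats the integer lattice structure, then extracting the permutation and signs, and finally isolating the fractional parts into the increasing bijections $g_i$. Throughout I may assume $f(0)=0$ after translating (subtracting $f(0)$ does not affect the step-isometry property). First I would observe that the step-isometry condition says $\lfloor\|f(x)-f(y)\|_\infty\rfloor = \lfloor\|x-y\|_\infty\rfloor$ for all $x,y$, and that $\|x-y\|_\infty = \max_i|x_i-y_i|$; the key difficulty is that the $\max$ is a fairly blunt instrument, so a single distance comparison only tells us about the coordinate achieving the maximum. The way around this is to use the \emph{shape} of balls in $\lid$: a step-isometry maps the set $\{y : \|x-y\|_\infty < k\}$ into a set whose closure behaves like a cube, and more importantly preserves the combinatorial incidence structure of these cubes.

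The heart of the argument is the $d=1$ case, which should be essentially the example already given in the text: an increasing (or decreasing) bijection that is integer-translation-equivariant on each half-open unit interval. Concretely, for $d=1$, I would show that $\lfloor|f(s)-f(t)|\rfloor = \lfloor|s-t|\rfloor$ forces $f$ to be monotonic (if $s<t<u$ but the images are out of order, one gets a contradiction by choosing $s,t,u$ with carefully chosen integer parts so that some floor distance is violated), and then that $f$ maps $[n,n+1)$ onto $[f(0)+n, f(0)+n+1)$ or onto a reflected copy, with the same "fractional map" $g$ on each piece up to the reflection $\eps$. This handles $\eps_i$ and $g_i$ once we are reduced to one coordinate.

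To reduce to the one-dimensional case I would exploit that $\lid = (\ell_\infty^1 \oplus \cdots \oplus \ell_\infty^1)_\infty$ is itself an iterated $\ell_\infty$-direct sum, but since Theorem~\ref{t:step-isometry} only peels off the \emph{maximal} $\lid$ summand (which is everything here), I instead need a direct combinatorial argument. The plan: for a point $x$, consider the coordinate hyperplane translates — the lines $x + \R e_i$ — and show a step-isometry must permute the "directions" of these lines. The mechanism is that two points differing only in coordinate $i$ have their floor-distance governed by that one coordinate, and by sliding along we detect when a $1$-parameter family of points stays within a fixed cube in all but one coordinate; the image family must do the same, which forces $f$ to send axis-parallel lines to axis-parallel lines (giving $\sigma$), and then within each such line we are back in the $d=1$ situation (giving $g_i$ and $\eps_i$). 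The main obstacle I anticipate is precisely this step: proving rigorously that $f$ respects the axis-parallel line structure, since the $\max$ norm's level sets are cubes whose faces align with \emph{all} coordinate directions simultaneously, so one must argue that the only way to map cubes to cubes while preserving nested/disjoint/overlapping relations is via a signed coordinate permutation composed with coordinatewise monotone maps — a statement that is intuitively clear but requires care about boundary behaviour because the balls are open and the $g_i$ need not be continuous. Once the line structure and the per-line $d=1$ analysis are in place, assembling $\sigma$, $\eps$, and the $g_i$ into the stated formula, and checking consistency (the same $\sigma$ and the same $g_i$ work globally, not just locally), is routine bookkeeping.
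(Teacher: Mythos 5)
There is a genuine gap, and you have in fact pointed at it yourself: the reduction to $d=1$ hinges on showing that $f$ sends axis-parallel lines to axis-parallel lines, and your proposal offers only the assertion that this is ``intuitively clear but requires care.'' That rigidity statement is precisely the hard content of the theorem, and the heuristic you sketch (sliding a one-parameter family inside a fixed cube) does not obviously close it: a step-isometry only sees $\lfloor\|x-y\|_\infty\rfloor$, so a family of points can stay inside a fixed unit cube while moving in a completely non-axis direction, and nothing in the floor data distinguishes it from an axis-parallel family. The paper's route to this fact is a substantial chain: first one must show that a step-isometry preserves integer distances \emph{exactly} and hence maps closed integer-radius balls to closed integer-radius balls (Corollary~\ref{c:closed-balls}, which itself needs the limiting argument of Lemma~\ref{l:converge-either-side} and Proposition~\ref{p:extends} --- a priori the definition only controls open balls); then that extreme points of $B$ map to extreme points and that, after composing with a linear isometry $Q$, the map fixes the lattice $\Lambda$ generated by $\ext(B)$ pointwise (Corollary~\ref{c:isometry}); then that an extreme line satisfies $[x,y]=B(0,1)\cap B(x+y,1)$, so that extreme line directions are preserved directions (Proposition~\ref{p:ext-line=preserved-line}); and finally that each $e_j$ is an extreme line direction of the cube, via the segment from $\sum_i e_i$ to $\sum_i e_i-2e_j$. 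Your plan contains no substitute for any of these steps.

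Two further points in the same vein. Your $d=1$ analysis needs $f(x+1)=f(x)+1$ (after normalisation) to conclude that the \emph{same} $g$ works on every unit interval; in the paper this comes from $f(y+x)=f(y)+f(x)$ for $x\in\Lambda$ (Lemma~\ref{l:same-y} and Corollary~\ref{c:Lambda-preserved}), proved via a boundedness argument using $f(nx)=nf(x)$ --- it is not automatic from monotonicity plus the step-isometry condition on a single interval. And even granting that axis-parallel lines are preserved, one still has to check that the induced permutation $\sigma$ and signs $\eps$ are \emph{global} (the same at every basepoint); the paper gets this for free because the coordinate directions are preserved directions in the strong, quantified-over-all-basepoints sense. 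As written, your proposal is a reasonable outline of \emph{what} must be proved, but the step you defer is the theorem.
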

Having established these two theorems, as we shall see,
it is relatively straightforward to prove
Theorems~\ref{t:main-theorem} and~\ref{t:precise-main-theorem}.

The layout of this paper is as follows. In the next section we
introduce some standard definitions and notation, and then in
Section~\ref{s:l-inf-decomp} we prove the existence and uniqueness of
the $\ell_\infty$-decomposition together with some simple facts about
it that will be useful later.  In Section~\ref{s:extend} we prove that
any step-isometry on a dense subset can be extended to a step-isometry
on the whole space.

In Sections~\ref{s:extreme}-\ref{s:l-inf-isom} we prove
Theorems~\ref{t:step-isometry} and~\ref{t:l_infinity}.  The proofs of
these are quite lengthy, and we break them down as
follows. Sections~\ref{s:extreme} and~\ref{s:lattice} show that any
step-isometry is an isometry on the set of finite sums of extreme
points of the unit ball of $V$, and that we can compose the
step-isometry with an isometry so that the combination fixes all these
finite sums.  Then, Sections~\ref{s:extreme-lines}
and~\ref{s:fixed-subspace} show that any step isometry that fixes
these finite sums actually preserves many directions, and that this
implies it must fix a particular subspace. Finally,
Section~\ref{s:complement=well-spanned} shows that this particular
subspace is the non-$\ell_\infty$-component of the
$\ell_\infty$-decomposition, and Sections~\ref{s:step-isom}
and~\ref{s:l-inf-isom} put these facts together to complete the proofs
of Theorems~\ref{t:step-isometry} and~\ref{t:l_infinity}.

Parts of the proof of Theorem~\ref{t:precise-main-theorem} rely on the
back and forth method; as we use this several times we abstract it out
into Section~\ref{s:bf}. Then, in Section~\ref{s:main-theorem}, we use
Theorem~\ref{t:step-isometry} to prove
Theorem~\ref{t:precise-main-theorem}. We conclude with a brief
discussion of some other exceptional cases and some open problems.

\section{Normed Space Preliminaries}\label{s:notation} 
Throughout this paper we will be working exclusively in
finite-dimensional normed spaces, and we shall frequently make use of
properties particular to such spaces, such as the compactness of the
unit ball, and the fact that a linear injection from the space to
itself is necessarily a bijection.

Before stating any of the results that we need, we introduce some very
basic notation. Given a normed space $V$, we write $B(x,r)$ for the
closed ball of radius $r$ about $x$ and, on the few occasions we need
it, $B^\circ(x,r)$ for the open ball. 

In many cases the normed space will decompose naturally into
subspaces, $V=U\oplus W$. Given a vector $v=u+w$, with $u\in U$ and
$w\in W$, we call $u$ the \emph{$U$-component of $v$}. In most cases we
use the `additive' notation $u+w$ for vectors, and $V=U\oplus W$ for
subspaces. However, in some cases it will be easier to think of a
vector $v\in V$ as the ordered pair $(u,w)$ and the space as
$V=U\times W$, and we will occasionally use this
alternative notation.

Much of our work will be on (not necessarily linear) functions mapping
the vector space $V$ to itself. One key tool that we shall use several
times is the Mazur-Ulam Theorem (see,
e.g.,~\cite{fleming2002isometries}). This states that
any isometry is `affine'; i.e., a translation of a linear map. More
formally:
\begin{theorem}[Mazur-Ulam Theorem]\label{t:mazur-ulam}
  Let $X$ and $Y$ be normed spaces and $f\colon X\to Y$ be a
  surjective isometry. Then the map $\hat f \colon X\to Y$ given by
  $\hat f(x)=f(x)-f(0)$ is linear.
\end{theorem}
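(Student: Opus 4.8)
The plan is to follow the classical argument of Mazur and Ulam. First I would pass from $f$ to $\hat f$: since translations are isometries, $\hat f(x)=f(x)-f(0)$ is again a surjective isometry, and now $\hat f(0)=0$, so it remains to show that any surjective isometry fixing $0$ is linear. Isometries are automatically continuous, and a continuous additive map between normed spaces is $\R$-linear (additivity forces $\Q$-linearity, and continuity upgrades this to $\R$-linearity). Hence it is enough to prove $\hat f$ is additive, and for this it suffices to establish the apparently weaker \emph{midpoint property}: every surjective isometry $g\colon X\to Y$ satisfies $g\bigl(\tfrac12(x+y)\bigr)=\tfrac12\bigl(g(x)+g(y)\bigr)$ for all $x,y$. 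Indeed, applied to $\hat f$ with one argument $0$ this gives $\hat f(2a)=2\hat f(a)$, whence $\hat f(x+y)=2\hat f\bigl(\tfrac12(x+y)\bigr)=\hat f(x)+\hat f(y)$.

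The heart of the matter is the midpoint property, and one cannot simply hope that $m=\tfrac12(x+y)$ is the \emph{unique} point with $\|m-x\|=\|m-y\|=\tfrac12\|x-y\|$, since this fails already in $\ell_\infty^2$; uniqueness must be recovered by a metric iteration. Fix $x\ne y$ in $X$, put $d=\|x-y\|$ and $m=\tfrac12(x+y)$, and set $M_1=\{z\in X:\|z-x\|=\|z-y\|=\tfrac12 d\}$, so that $\mathrm{diam}(M_1)\le d<\infty$ and $m\in M_1$. Recursively define $M_{n+1}=\{z\in M_n:\|z-w\|\le\tfrac12\,\mathrm{diam}(M_n)\text{ for all }w\in M_n\}$, the set of ``metric centres'' of $M_n$. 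The key device is the point reflection $\phi(z)=x+y-z$, an isometry swapping $x$ and $y$; an easy induction shows each $M_n$ is $\phi$-invariant. Since $\|z-\phi(z)\|=2\|z-m\|$ for every $z$, and $z,\phi(z)\in M_n$ whenever $z\in M_n$, we get $\|z-m\|\le\tfrac12\,\mathrm{diam}(M_n)$ for all $z\in M_n$; this simultaneously shows $m\in M_{n+1}$ (so $m\in\bigcap_n M_n$) and, applied to a pair of points of $M_{n+1}$, gives $\mathrm{diam}(M_{n+1})\le\tfrac12\,\mathrm{diam}(M_n)$. Hence $\mathrm{diam}(M_n)\to0$ and $\bigcap_n M_n=\{m\}$: a purely metric description of the midpoint.

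It remains to transfer this to $Y$. Because $g$ is a \emph{surjective} isometry, it carries the metrically defined set $M_1$ for the pair $x,y$ bijectively onto the corresponding set $M_1'$ for $g(x),g(y)$ --- the inclusion $g(M_1)\subseteq M_1'$ is immediate, and surjectivity of $g$ gives the reverse inclusion --- and by induction $g(M_n)=M_n'$ for every $n$; since $g$ is injective, $g\bigl(\bigcap_n M_n\bigr)=\bigcap_n M_n'$. Therefore $g$ maps $\{m\}$ onto $\{m'\}$, where $m'=\tfrac12(g(x)+g(y))$ by the same construction in $Y$, i.e.\ $g\bigl(\tfrac12(x+y)\bigr)=\tfrac12\bigl(g(x)+g(y)\bigr)$. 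Together with the reduction in the first paragraph, this shows $\hat f$ is additive, hence linear, as required.

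The only real obstacle I anticipate is the midpoint lemma itself --- specifically checking that the sets $M_n$ are $\phi$-invariant and that their diameters halve at each step --- together with the observation that surjectivity of $g$ is exactly what promotes $g(M_n)\subseteq M_n'$ to an equality (this is precisely what fails for non-surjective isometries, which need not be affine). Everything else is routine.
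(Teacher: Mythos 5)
Your argument is correct, but note that the paper does not prove this statement at all: the Mazur--Ulam Theorem is imported as a known result with a citation to the literature, so there is no in-paper proof to compare against. What you have written is a faithful and complete rendition of the classical Mazur--Ulam proof: the reduction from linearity to additivity to the midpoint property is sound, the iterated ``metric centres'' $M_{n+1}=\{z\in M_n:\|z-w\|\le\tfrac12\operatorname{diam}(M_n)\ \forall w\in M_n\}$ together with the reflection $\phi(z)=x+y-z$ correctly isolates $\tfrac12(x+y)$ as $\bigcap_n M_n$, and you rightly identify surjectivity as the ingredient that upgrades $g(M_n)\subseteq M_n'$ to equality (the hypothesis without which the theorem fails). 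The only cosmetic gap is that the induction step $g(M_{n+1})=M_{n+1}'$ implicitly uses $\operatorname{diam}(M_n)=\operatorname{diam}(M_n')$, which is immediate from $g(M_n)=M_n'$ but worth stating. No changes needed.
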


Since we are concerned only with finite-dimensional normed spaces in
this paper, it is worth noting that the Mazur-Ulam Theorem has a
particularly simple form in this setting.
\begin{corollary}\label{c:fin-dim-mazur-ulam}
  Suppose $V$ is a finite-dimensional normed space and that $f\colon
  V\to V$ is an isometry. Then $f$ is an affine bijection.
\end{corollary}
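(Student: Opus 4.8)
The plan is to deduce the statement from the Mazur--Ulam theorem (Theorem~\ref{t:mazur-ulam}), whose only hypothesis we do not get for free is surjectivity. So the work is entirely in showing that an isometry $f\colon V\to V$ of a finite-dimensional normed space onto itself is automatically surjective; once we know that, $\hat f(x)=f(x)-f(0)$ is linear by Mazur--Ulam, hence $f$ is affine, and a linear injection of a finite-dimensional space to itself is a bijection (a fact already flagged in Section~\ref{s:notation}), so $f$ is an affine bijection.

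To prove surjectivity, first note that $f$ is injective (it is distance-preserving) and continuous (it is $1$-Lipschitz, indeed an isometry). After translating we may assume $f(0)=0$. Then $f$ maps the closed ball $B(0,r)$ into $B(0,r)$ for every $r>0$, since $\|f(x)\|=\|f(x)-f(0)\|=\|x-0\|=\|x\|$. Thus $f$ restricts to a continuous injection of the compact set $B(0,r)$ into itself. Here I would invoke invariance of domain, or more elementarily the fact that a continuous injective map from a compact subset of $\R^n$ to $\R^n$ is a homeomorphism onto its image and, crucially, that such a map of $B(0,r)$ into $B(0,r)$ must be onto: $f(B(0,r))$ is compact hence closed in $B(0,r)$, and by invariance of domain $f$ maps the interior $B^\circ(0,r)$ onto an open subset of $\R^n$ contained in $B(0,r)$, hence into $B^\circ(0,r)$; so $f(B^\circ(0,r))$ is open in $B^\circ(0,r)$, while $f(B(0,r))\cap B^\circ(0,r)$ is also closed in $B^\circ(0,r)$ (as the intersection of a closed set with $B^\circ(0,r)$), and by connectedness of $B^\circ(0,r)$ it is everything. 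Letting $r\to\infty$ gives $f(V)=V$.

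Alternatively, and perhaps more in keeping with the elementary flavour of the paper, one can avoid invariance of domain: the image $f(B(0,r))$ is a compact subset of $B(0,r)$ containing $0$, and one shows it is also convex-like enough — or simply observes that since $f$ is an isometry onto its image, the inverse $f^{-1}\colon f(V)\to V$ is an isometry, and $f(V)$ is a closed subset of $V$ (being the image of the proper map $f$), so $f(V)$ is itself a complete finite-dimensional metric space; one then runs the standard Mazur--Ulam midpoint argument directly to see $f(V)$ is an affine subspace of $V$ of full dimension, hence all of $V$. The main obstacle, modest though it is, is precisely this surjectivity step: everything else is immediate from Mazur--Ulam and finite-dimensional linear algebra. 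I would present the invariance-of-domain argument as the cleanest route, relegating the alternative to a parenthetical remark.
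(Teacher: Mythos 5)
Your proof is correct, but it takes a genuinely different route from the paper's. You reduce to surjectivity exactly as the paper does, but then you establish surjectivity via invariance of domain: since $\|f(x)\|=\|x\|$ after normalising $f(0)=0$, the map sends $B(0,r)$ into itself and the sphere to the sphere, so $f(B^\circ(0,r))=f(B(0,r))\cap B^\circ(0,r)$ is simultaneously open (invariance of domain) and closed (compactness) in the connected set $B^\circ(0,r)$, hence all of it; this last identification of the open image with the closed trace is worth making explicit, since it is exactly where you use that boundary points cannot map into the interior. The paper instead avoids invariance of domain entirely: it shows $f(V)$ is closed, and then, assuming some $x\notin f(V)$ has an $\eps$-ball disjoint from $f(V)$, observes that the orbit $x,f(x),f^2(x),\dots$ is an $\eps$-separated sequence lying on the sphere of radius $\|x\|$, contradicting compactness of the closed ball. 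The paper's argument is more elementary and self-contained (no algebraic topology), while yours is a standard and clean application of a well-known theorem; note that your argument is not more general, since it still relies crucially on the norm-preservation $\|f(x)\|=\|x\|$ to confine the image of each ball. Your parenthetical alternative (applying the Mazur--Ulam midpoint argument to $f\colon V\to f(V)$) is shakier as stated, because $f(V)$ is not known to be a linear subspace, so the theorem's hypotheses are not met; I would drop it in favour of either your main argument or the paper's orbit argument.
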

\begin{proof}
  By the Mazur-Ulam Theorem it suffices to show that $f$ is
  surjective.  First, observe that, by translating $f$ if necessary,
  we way assume that $f(0)=0$.

  We claim that $f(V)$ is closed. Indeed, if a sequence $f(x_n)$ tends
  to $y$, then $f(x_n)$ is Cauchy. This implies that, since $f$ is an
  isometry, the sequence $(x_n)$ is Cauchy, and thus converges to some point, $x$
  say. But then $f(x)=y$, which completes the proof of the claim.

  Now, suppose, for a contradiction, that there is some point $x\not
  \in f(V)$.  By the claim, $f(V)$ is closed, so there exists $\eps>0$
  such that the open ball $B^\circ(x,\eps)$ is disjoint from $f(V)$.

  Trivially, this implies that, for any $n\ge 1$, $f^n(x)\not \in
  B^\circ(x,\eps)$ or, equivalently, that $\|f^n(x) -x\|>\eps$. Since
  $f$ is an isometry, this shows that, for any $n>m\ge 1$, we have
  $\|f^n(x)-f^m(x)\|=\|f^{n-m}(x)-x\|>\eps$; i.e., the
  sequence $x,f(x),f^2(x)\dots$ is $\eps$-separated. But these terms all
  have norm $\|x\|$ (since $f(0)=0$), so this contradicts the
  compactness of the closed ball $B(0,\|x\|)$.
\end{proof}
Much of our work will concern properties of the closed unit ball
$B=B(0,1)$, and we recall some simple facts and notation related to
$B$.

The ball $B$ is a convex compact set, and the norm is determined by
$B$.  An extreme point of $B$ is a point $x$ such that if $y,z\in B$
and $x$ is a convex combination of $y,z$ then $y=z=x$. We write
$\ext(B)$ for the extreme points of $B$. The set $B$ is the convex
hull of its extreme points; i.e., $\conv(\ext(B))=B$. Since $B$ is not
contained in any proper subspace we see that the vectors in $\ext(B)$
span all of $V$. For any set of vectors $A$ we use $\spanof{A}$ to
denote the span of the vectors in $A$.

It will be useful to work with finite sums of extreme points. Thus, we
let $\Lambda$ be the `lattice' generated by the extreme points of the
unit ball $B$: that is all points of the form $\sum_i\lambda_i x_i$
with $\lambda_i\in \Z$ and $x_i\in \ext(B)$. Note that $\Lambda$ need
not be discrete. 

We start with a simple lemma that shows that $\Lambda$ is not too
sparse.
\begin{lemma}\label{l:lambda-cover}
  Let $V$ be a finite-dimensional normed space and let $v\in V$. Then
  there exists $x\in \Lambda$ such that $\|x-v\|\le \dim V/2$.
\end{lemma}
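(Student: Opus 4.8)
The plan is to reduce the statement to a rounding argument in a well-chosen coordinate system. First I would use the fact recorded above that $\ext(B)$ spans $V$: from a spanning set one can always extract a basis, so there is a basis $x_1,\dots,x_n$ of $V$ with each $x_i\in\ext(B)$, where $n=\dim V$. Since $x_i\in B$ we have $\|x_i\|\le 1$ for every $i$ (in fact $\|x_i\|=1$, but the inequality is all we need).

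Next, given $v\in V$, I would express it in these coordinates as $v=\sum_{i=1}^n c_i x_i$ with $c_i\in\R$, and then round each coordinate: choose $\lambda_i\in\Z$ with $|c_i-\lambda_i|\le\tfrac12$. The point $x:=\sum_{i=1}^n\lambda_i x_i$ is a finite sum of integer multiples of extreme points of $B$, so by the definition of $\Lambda$ we have $x\in\Lambda$.

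The bound then follows immediately from the triangle inequality together with $\|x_i\|\le 1$:
\[
\|x-v\|=\Bigl\|\sum_{i=1}^n(\lambda_i-c_i)x_i\Bigr\|\le\sum_{i=1}^n|\lambda_i-c_i|\,\|x_i\|\le\sum_{i=1}^n\tfrac12=\frac{n}{2}=\frac{\dim V}{2},
\]
which is exactly the desired estimate. I do not anticipate any real obstacle here; the only points requiring care are that a basis can be chosen from within $\ext(B)$ — which is precisely the spanning property of the extreme points noted in the preceding paragraph — and that each such basis vector has norm at most $1$, which holds since it lies in the unit ball.
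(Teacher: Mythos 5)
Your proposal is correct and is essentially identical to the paper's own argument: the paper likewise takes a minimal spanning set (i.e.\ a basis) of extreme points, rounds each coordinate of $v$ to the nearest integer, and applies the triangle inequality with $\|x_i\|=1$. No issues.
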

\begin{proof}
  As noted above, the extreme points of $B$ span $V$, so let
  $x_1,x_2,\dots x_d$, where $d=\dim V$, be any minimal spanning set
  of extreme points of $B$. Note that $\|x_i\|=1$ for all $i$.

  We can write $v=\sum_{i=1}^d a_ix_i$. For each $i$ let $\lambda_i$ be $a_i$
  rounded to the nearest integer (i.e., $\lambda_i=\lfloor
  a_i+1/2\rfloor$.)
  Then 
  \[\|v-x\|=\left\|\sum_{i=1}^d(a_i-\lambda_i)x_i\right\|\le
  \sum_{i=1}^d|a_i-\lambda_i|\|x_i\|\le d/2\] 
  as claimed.
\end{proof}
We remark that it is easy to see that this bound is obtained for the
space~$l_1^d$ (i.e., $\R^d$ with norm
$\|(x_1,x_2,\dots,x_d)\|=\sum_{i=1}^d|x_i|$).

Since the set $\Lambda$ need not be discrete we will often work with
its closure $\overline{\Lambda}$ which has a relatively simple form.
\begin{lemma}\label{l:basis-isom}
  Let $V$ be a $d$-dimensional normed space. Then
  there is a basis $e_1,e_2,\dots,e_d$ of unit vectors in $V$ and an $r\le d$ such
  that
  \[
  \overline\Lambda=\sum_{i<r}\R e_i\oplus\sum_{i\ge r}\Z e_i.
  \]
\end{lemma}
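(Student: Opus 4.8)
The plan is to deduce the statement from the structure theorem for closed subgroups of $\R^d$, the only extra ingredient being a lattice‑theoretic fact about lattices generated by short vectors. Fixing a linear isomorphism $V\cong\R^d$, the set $\overline\Lambda$ is a closed subgroup of $\R^d$, so it splits as $\overline\Lambda=W\oplus D$, where $W$ is the (unique) maximal linear subspace contained in $\overline\Lambda$ and $D$ is a discrete subgroup with $\spanof{D}\cap W=\{0\}$. Since $\ext(B)$ spans $V$, the group $\Lambda$, and hence $\overline\Lambda$, spans $V$, so $W$ and $\spanof{D}$ are complementary: $V=W\oplus\spanof{D}$, and $D$ has rank $d-\dim W$. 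Put $r=\dim W+1$, and take $e_1,\dots,e_{r-1}$ to be any basis of $W$ consisting of unit vectors (normalise an arbitrary basis); then $\sum_{i<r}\R e_i=W$, and what remains is to produce unit vectors $e_r,\dots,e_d\in\overline\Lambda$ whose images in $V/W$ form a $\Z$-basis of the full‑rank lattice $\overline M:=\overline\Lambda/W$.

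First I would record that $\overline\Lambda=W+\Lambda$: for $x\in\Lambda$ its $W$-component lies in $W\subseteq\overline\Lambda$, so its $\spanof{D}$-component lies in $\overline\Lambda\cap\spanof{D}=D$; hence $D\subseteq W+\Lambda$ and $W+\Lambda=W+D=\overline\Lambda$. It follows that $\overline\Lambda$ is generated by its elements of norm at most $1$: it is generated by $W$ together with $\ext(B)$, the extreme points being unit vectors, and $W$ being generated by its closed unit ball (each $w\in W$ is a sum of vectors $w/n\in W$ of norm at most $1$). Applying the quotient map $q\colon V\to V/W$, which does not increase norms, we get that $\overline M=q(\overline\Lambda)$ is a full‑rank lattice in the normed space $V/W$ generated by vectors of norm at most $1$.

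The heart of the matter is then the following assertion, which I expect to be the main obstacle: \emph{a full‑rank lattice $M$ in a finite‑dimensional normed space, generated by vectors of norm at most $1$, has a $\Z$-basis consisting of vectors of norm at most $1$.} I would prove this by induction on the rank $m$. For $m=1$ any nonzero generator $cb$ satisfies $\|b\|=\|cb\|/|c|\le 1$. For $m\ge 2$, take a shortest nonzero $b_1\in M$ (necessarily primitive, with $\|b_1\|\le 1$), pass to $M/\Z b_1$ with the quotient norm — still generated by vectors of norm at most $1$ — and apply the inductive hypothesis to get a short $\Z$-basis downstairs. The delicate step is lifting that basis to vectors of norm at most $1$ in $M$: a coset of $\Z b_1$ that meets the unit ball need not meet it in a lattice point, so one has to argue that, $b_1$ being a shortest vector, the relevant chords of the unit ball are long enough to contain a lattice point of the coset. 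A determinant‑minimality argument over linearly independent tuples of short lattice vectors is a possible alternative route, with the same norm‑bookkeeping (controlling the ``deficiency vector'' produced when the minimal tuple fails to be a basis) as the essential difficulty.

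Granting this assertion, the conclusion follows quickly. Apply it to $\overline M$ to obtain a $\Z$-basis $\overline e_r,\dots,\overline e_d$ with $\|\overline e_i\|_{V/W}\le 1$. The quotient norm is an infimum that is attained (the unit ball is compact), so each $\overline e_i$ lifts to $v_i\in V$ with $q(v_i)=\overline e_i$ and $\|v_i\|\le 1$; sliding $v_i$ within its coset $v_i+W$, along which the norm attains every value in $[\,\|\overline e_i\|_{V/W},\infty)$, I may assume $\|v_i\|=1$, and set $e_i:=v_i$ for $r\le i\le d$. The vectors $e_r,\dots,e_d$ are linearly independent modulo $W$, so $W_2:=\spanof{e_r,\dots,e_d}$ is a complement of $W$ in $V$; and since their images form a $\Z$-basis of $\overline\Lambda/W$, one checks directly that $\overline\Lambda\cap W_2=\sum_{i\ge r}\Z e_i$ and hence $\overline\Lambda=W\oplus(\overline\Lambda\cap W_2)=\sum_{i<r}\R e_i\oplus\sum_{i\ge r}\Z e_i$, with $e_1,\dots,e_d$ a basis of $V$ consisting of unit vectors, as required.
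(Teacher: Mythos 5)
Your reduction to the structure theorem for closed subgroups of $\R^d$ is exactly the paper's route: the authors' entire proof consists of the observation that $\Lambda$ spans $V$, so $\overline\Lambda$ is a spanning closed additive subgroup and hence of the form $W\oplus\sum_i\Z v_i$. Everything after that in your proposal is devoted to upgrading the $v_i$ to unit vectors, and here there is a genuine gap. The assertion you isolate as the heart of the matter --- that a full-rank lattice generated by vectors of norm at most $1$ has a $\Z$-basis of vectors of norm at most $1$ --- is false in general. Conway and Sloane constructed a Euclidean lattice (in dimension $11$, later improved to $10$) that is generated by its minimal vectors but has no basis of minimal vectors; rescaling so that the minimal norm equals $1$, such a lattice is generated by vectors of norm at most $1$, yet the only nonzero lattice vectors of norm at most $1$ are the minimal ones, so no basis of vectors of norm at most $1$ exists. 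Your induction founders at precisely the step you flag: the coset $\bar b+\Z b_1$ meets the unit ball in a chord whose length (in the norm) can be smaller than $\|b_1\|$, and the standard lifting argument only yields $\|b\|\le\|\bar b\|+\|b_1\|/2\le 3/2$; the counterexample shows this loss cannot be removed. So the route through the ``short generating set implies short basis'' lemma cannot be completed; whether the particular lattices $\overline\Lambda/W$ arising from extreme points of unit balls always admit a unit-vector basis is a separate question that this general lemma does not settle.

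That said, your instinct that the normalisation of the discrete part requires an argument is sound --- indeed more careful than the paper, whose one-line proof does not address it either. The saving grace is that the unit-vector property is never used for $i\ge r$: all that is needed downstream is the continuous subspace $U_0=\sum_{i<r}\R e_i$ and the fact that $\ext(B)$, being bounded, meets only finitely many cosets of $U_0$ (Corollary~\ref{c:coset-cover}), and for $i<r$ one normalises freely since $\R e_i=\R(e_i/\|e_i\|)$. So the pragmatic repair is to weaken the normalisation claim to the continuous part rather than to prove the lattice lemma. The remainder of your argument (that $\overline\Lambda=W+\Lambda$, that $\overline\Lambda$ is generated by elements of norm at most $1$, and the final lifting of a basis of $\overline\Lambda/W$ back to $V$) is essentially correct, modulo the small slip that showing each $x\in\Lambda$ has $\spanof{D}$-component in $D$ proves $\Lambda\subseteq W+D$ rather than $D\subseteq W+\Lambda$; the latter needs the additional observation that $W+\Lambda$ is closed, being the preimage under the quotient map of a subgroup of the discrete group $q(D)$.
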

\begin{proof}
  As remarked above the extreme points of $B$ span $V$, so $\Lambda$
  spans $V$.  Hence $\overline\Lambda$ is a closed additive subgroup
  of $V\equiv \R^d$ so must have the form specified (see,
  e.g.,~\cite{GNANT}).
\end{proof}
The following subspace will be important later.
\begin{defn}
  We call the subspace $\sum_{i<r}\R e_i$ in the decomposition given
  by Lemma~\ref{l:basis-isom} the \emph{continuous subspace} of
  $\overline\Lambda$ and we usually denote it $U_0$.
\end{defn}
We make the following simple observation for future reference.
\begin{corollary}\label{c:coset-cover}
  The extreme points of the unit ball $B$ are covered by finitely many
  cosets of the continuous subspace $U_0$.\qed
\end{corollary}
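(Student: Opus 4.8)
The plan is to reduce the statement to a finiteness count in the quotient $V/U_0$. Recall from Lemma~\ref{l:basis-isom} that, in the basis $e_1,\dots,e_d$ it provides, $\overline\Lambda=\sum_{i<r}\R e_i\oplus\sum_{i\ge r}\Z e_i$, with $U_0=\sum_{i<r}\R e_i$. Thus a coset of $U_0$ lying inside $\overline\Lambda$ is specified precisely by an integer tuple $(n_r,\dots,n_d)$, the ``discrete part'' shared by all its points; equivalently, the image of $\overline\Lambda$ under the quotient map $\pi\colon V\to V/U_0$ is the discrete subgroup generated by $\pi(e_r),\dots,\pi(e_d)$. Since every $x\in\ext(B)$ is trivially a finite sum of extreme points, $\ext(B)\subseteq\Lambda\subseteq\overline\Lambda$, so each extreme point does lie in a coset of $U_0$; it then remains only to bound the number of such cosets.

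For this I would invoke the equivalence of norms on the finite-dimensional space $V$: there is a constant $C$ with $\max_i|b_i|\le C\bigl\|\sum_i b_i e_i\bigr\|$ for every vector $\sum_i b_i e_i$. Writing an extreme point as $x=\sum_{i<r}a_i e_i+\sum_{i\ge r}n_i e_i$ and using $\|x\|\le 1$, this gives $|n_i|\le C$ for all $i\ge r$. Hence only finitely many tuples $(n_r,\dots,n_d)$ occur (at most $(2\lfloor C\rfloor+1)^{\,d-r+1}$ of them), so $\ext(B)$ is covered by finitely many cosets of $U_0$. In quotient language: $\pi(\ext(B))\subseteq\pi(B)$ is bounded and also lies in the discrete group $\pi(\overline\Lambda)$, hence is finite.

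I do not expect any genuine obstacle here: all the real work has been done in Lemma~\ref{l:basis-isom}, which presents $\overline\Lambda$ in the required normal form, and the rest is the elementary interplay of discreteness (of $\pi(\overline\Lambda)$) and boundedness (of $B$). The only minor points to keep straight are the index bookkeeping — $U_0$ is spanned by $e_1,\dots,e_{r-1}$ while the lattice directions are $e_r,\dots,e_d$ — and the observation that a single extreme point is itself a (one-term) finite sum of extreme points, so that $\ext(B)\subseteq\Lambda$.
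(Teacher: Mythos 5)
Your proof is correct and is precisely the argument the paper leaves implicit (the corollary is stated with \qed and no written proof): since $\ext(B)\subseteq\Lambda\subseteq\overline\Lambda$ and Lemma~\ref{l:basis-isom} shows $\overline\Lambda/U_0$ is discrete, the boundedness of $B$ forces only finitely many cosets to be hit. Nothing further is needed.
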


We conclude this section with a brief discussion of the meaning of
`almost all' for countable dense sets. Before doing this we remark
that, for our purposes, all we need is the following: if
$V=(U\oplus\lid)_\infty$ then, for almost all sets $S$, no two points
of $S$ have the same $U$-component, nor differ by an integer in any
coordinate direction in their $\lid$-component. This obviously holds
for any sensible definition of `almost all.'

Indeed, there are several possible definitions in the literature, any
of which would be suitable. One such possibility is to take any
distribution on $\R^d$ with a strictly positive density function, and
let $S$ be the set formed by taking countably many independent samples
from it. Another would be to take the union of countably many
density-one Poisson Processes. (There are also rather less intuitive
possibilities -- for example taking $S$ to be the set of all local
minima of a Brownian motion on $\R^d$ -- see, e.g.,~\cite{MR2217814}
for a more complete discussion.)

\section{The $\ell_\infty$-decomposition}\label{s:l-inf-decomp}
In this section we prove the existence of the
$\ell_\infty$-decomposition mentioned in the introduction.
\begin{defn}
  A unit vector $v$ is an \emph{$\ell_\infty$-direction} if there
  exists a subspace $U$ of $V$ such that $V=(\spanof{v}\oplus
  U)_\infty$; i.e., $\|\alpha v+u\|=\max(|\alpha|,\|u\|)$ for all
  $\alpha\in \R$ and $u\in U$. We call $U$ the subspace
  corresponding to $v$. Note, we view
  $v$ and $-v$ as the \emph{same} $\ell_\infty$-direction.
\end{defn}
This definition is useful since, in any decomposition of $V$ as
$(U\oplus\ell_\infty^d)_\infty$ then each basis vector of the
$\ell_\infty^d$ is an $\ell_\infty$-direction; see
Proposition~\ref{p:l-inf-decomposition} for a formal proof.

\begin{lemma}
  Suppose that $v$ is an $\ell_\infty$-direction. Then the  corresponding
  subspace $U$ is unique.
\end{lemma}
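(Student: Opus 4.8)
The plan is to characterise the subspace $U$ intrinsically, purely in terms of $v$ and the norm of $V$, so that no choice is involved. The candidate description is
\[
  U \;=\; U_v \;:=\; \bigl\{\, w\in V : \text{there is }\delta>0\text{ with }\|v+sw\|=1\text{ for all }s\in[0,\delta]\,\bigr\}.
\]
The right-hand side refers only to $v$ and $\|\cdot\|$, so once I show $U=U_v$ the lemma follows at once: if $V=(\spanof{v}\oplus U)_\infty=(\spanof{v}\oplus U')_\infty$, then $U=U_v=U'$.

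First I would check the easy inclusion $U\subseteq U_v$: for $w\in U$ the $\ell_\infty$-property gives $\|v+sw\|=\max(1,s\|w\|)=1$ whenever $0\le s\le 1/\|w\|$ (and for all $s$ if $w=0$), so $w\in U_v$. For the reverse inclusion $U_v\subseteq U$, take $w\in U_v$ and write $w=\beta v+u$ with $\beta\in\R$ and $u\in U$, using the decomposition we are given. Then $v+sw=(1+s\beta)v+su$, so $\|v+sw\|=\max(|1+s\beta|,s\|u\|)$ for $s>0$. A short sign analysis finishes it: if $\beta>0$ this exceeds $1$ for every $s>0$; if $\beta<0$ then for small $s>0$ we have $|1+s\beta|=1+s\beta<1$, and $\|v+sw\|=1$ would force $s\|u\|=1$ for a whole interval of $s$, which is absurd. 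Hence $\beta=0$ and $w=u\in U$, proving the claim and hence the lemma.

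The argument is short and I do not expect a genuine obstacle; the only point to be careful about is the sign analysis, and in particular noticing that the \emph{one-sided} condition $s\in[0,\delta]$ already excludes $\beta\neq0$, so there is no need for a symmetric two-sided variant. If a more conceptual proof is wanted, one can instead observe that in the decomposition the unit ball $B$ of $V$ is isometric to the ``cylinder'' $[-1,1]\times B_U$, where $B_U$ is the unit ball of $U$; hence the unique smallest face of $B$ containing $v$ is $v+B_U$, and since that face is determined by $B$ alone, the subspace it spans after translation to the origin is forced, recovering $U$. I would present the computational version, as it is the quickest.
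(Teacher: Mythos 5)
Your proof is correct. The core of the argument is the same as the paper's: write a candidate vector as $\beta v+u$ with $u\in U$, use the $\max$ formula to evaluate norms, and rule out $\beta\neq 0$ by a sign analysis. The difference is in how the test is set up. The paper takes $u'$ in a putative second complement, sets $\gamma=\|u'\|$, and exploits the two-sided symmetry $\|u'+\gamma v\|=\|u'-\gamma v\|$ together with $\gamma\ge\|u\|$ to force $\beta=0$. You instead give an intrinsic, norm-only characterisation $U=U_v$ (the directions along which the norm is locally constant at $v$, i.e.\ the face of the unit ball at $v$), and then uniqueness is automatic since $U_v$ does not depend on any choice. Your packaging is slightly more conceptual and makes the logical structure of the uniqueness claim transparent (both complements equal the same intrinsically defined set), at the cost of a definition; the paper's version is marginally shorter. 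Both sign analyses check out, including your observation that the one-sided condition $s\in[0,\delta]$ already kills $\beta<0$ because $s\|u\|=1$ cannot hold on an interval.
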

\begin{proof}
  Fix a corresponding subspace $U$.  Suppose $u'\in V$ is any vector
  satisfying $\|\alpha v+ u'\|=\max(|\alpha|,\|u'\|)$. We can write
  $u'=\beta v+u$ for some $\beta\in\R$ and $u\in U$. By the definition
  of an $\ell_\infty$-direction, $\|u'\|\ge \|u\|$. Let
  $\gamma=\|u'\|$. By our assumption on $u'$ we have $\|u'+\gamma
  v\|=\|u'-\gamma v\|$ so
  $\|u+(\beta+\gamma)v\|=\|u+(\beta-\gamma)v\|$. Since $\gamma\ge
  \|u\|$ this implies $\beta=0$; i.e., $u'\in U$.
\end{proof}

\begin{lemma}\label{l:l-inf-orthogonal}
  Suppose that $v_1$ and $v_2$ are distinct $\ell_\infty$-directions
  with corresponding subspaces $U_1$ and $U_2$. Then $v_2\in U_1$.
\end{lemma}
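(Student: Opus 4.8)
The plan is to reduce the conclusion to the vanishing of a single scalar and then to pin that scalar down by evaluating the norm of a suitable family of vectors in two different ways. Since $v_1$ is an $\ell_\infty$-direction, $V=\spanof{v_1}\oplus U_1$, so we may write $v_2=\beta v_1+u$ with $\beta\in\R$ and $u\in U_1$; as the sum is direct, the claim $v_2\in U_1$ is exactly the assertion $\beta=0$. Symmetrically, write $v_1=\gamma v_2+w$ with $\gamma\in\R$ and $w\in U_2$. Two preliminary observations: since $v_1\ne\pm v_2$ (they are distinct directions) we have $u\ne 0$ unless $\beta=0$; and evaluating $\|v_1\|=\|v_2\|=1$ through the two $\ell_\infty$-splittings gives $\max(|\beta|,\|u\|)=\max(|\gamma|,\|w\|)=1$, so all four of $|\beta|,\|u\|,|\gamma|,\|w\|$ lie in $[0,1]$.

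The key identity comes from computing $\|v_2-nv_1\|$ for integers $n\ge 0$ in two ways. Using $v_2=\beta v_1+u$ and the $\ell_\infty$-property of $v_1$ gives $\|v_2-nv_1\|=\|(\beta-n)v_1+u\|=\max(|\beta-n|,\|u\|)$; using $v_1=\gamma v_2+w$ and the $\ell_\infty$-property of $v_2$ gives $\|v_2-nv_1\|=\|(1-n\gamma)v_2-nw\|=\max(|1-n\gamma|,n\|w\|)$. Hence
\[
\max\bigl(|\beta-n|,\|u\|\bigr)=\max\bigl(|1-n\gamma|,n\|w\|\bigr)\qquad\text{for all }n\ge 0.
\]
For large $n$ the left-hand side equals $n-\beta$; analysing the right-hand side according to the sign of $\gamma$ and to whether $\|w\|=1$ (replacing $v_2$ by $-v_2$, which leaves $U_2$ and the statement unchanged, one may assume $\gamma\ge 0$), one finds it equals $n$, $n-1$, or $n+1$ for large $n$. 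Comparing, either $\beta=0$, and we are done, or $\beta=\gamma\in\{-1,+1\}$.

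It remains to eliminate the case $\beta=\gamma=\eps\in\{\pm 1\}$, which I expect to be the main obstacle. Substituting $v_1=\eps v_2+w$ into $v_2=\eps v_1+u$ yields $\eps w+u=0$, so $w=-\eps u$ and $u\ne 0$, with $\|u\|=\|w\|\in(0,1]$; in particular $u\in U_1\cap U_2$, and $\eps v_1=v_2-u$. Now run a telescoping argument: for each integer $n\ge 1$ set $c_n=nv_2-(n-1)\eps v_1$. Expanding via $v_2=\eps v_1+u$ gives $c_n=\eps v_1+nu$, of norm $\max(1,n\|u\|)$; expanding via $\eps v_1=v_2-u$ gives $c_n=v_2+(n-1)u$, of norm $\max(1,(n-1)\|u\|)$. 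Therefore $\max(1,n\|u\|)=\max(1,(n-1)\|u\|)$ for every $n\ge 1$, which fails as soon as $(n-1)\|u\|>1$. This contradiction forces $\beta=0$, i.e. $v_2\in U_1$, completing the proof.
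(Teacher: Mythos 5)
Your proof is correct, but it takes a genuinely different route from the paper's. The paper first proves a small geometric claim: any line in the direction $v_2$ either passes through $0$ or contains a whole non-trivial interval of points of minimal norm (seen by writing the line through its intersection point with $U_2$ and using the $\ell_\infty$-splitting for $v_2$). It then writes $v_2=\alpha v_1+\beta u_1$ with $u_1\in U_1$, $\|u_1\|=1$, reduces to $\alpha,\beta>0$, and exhibits the line $\{v_1-u_1+\lambda v_2\}$, whose norm $\max(|1+\lambda\alpha|,|1-\lambda\beta|)$ has a unique minimiser, of norm one, at $\lambda=0$ --- contradicting the claim in one stroke. You instead evaluate $\|v_2-nv_1\|$ through both splittings to get $\max(|\beta-n|,\|u\|)=\max(|1-n\gamma|,n\|w\|)$, read off the large-$n$ asymptotics to reduce to $\beta=0$ or $\beta=\gamma=\pm 1$, and then kill the exceptional case with the telescoping family $nv_2-(n-1)\eps v_1$, whose two expansions $\eps v_1+nu$ and $v_2+(n-1)u$ give the incompatible norms $\max(1,n\|u\|)$ and $\max(1,(n-1)\|u\|)$ once $(n-1)\|u\|>1$. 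The one step that needed care --- that $u$ lies in $U_1\cap U_2$ so that both $\ell_\infty$-splittings apply to those vectors --- you justify correctly via $w=-\eps u$. Your argument is longer and purely computational where the paper's is shorter and more geometric, but it is elementary, self-contained, and every step checks out.
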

\begin{proof}
  First, we claim that, for any vector $v'$, the line $\{v'+\lambda
  v_2:\lambda\in \R\}$ either contains a non-trivial interval of
  vectors of minimal norm (among points on the line), or
  contains~0. Indeed, this line contains a point, say $u'$ of
  $U_2$. Thus, we can write the line as $\{u'+\lambda v_2:\lambda\in
  \R\}$. Since $\|u'+\lambda v_2\|=\max(|\lambda|,\|u'\|)$ we see
  that, if $u'=0$, we have the latter case; and if $\|u'\|>0$ all
  vectors in the set $\{u'+\lambda v_2:|\lambda|\le \|u'\|\}$ have
  minimal norm. The claim follows.

  We can write $v_2=\alpha v_1+\beta u_1$
  with $u_1\in U_1$ and $\|u_1\|=1$. If $\alpha=0$ then $v_2$ is in
  $U_1$ as claimed; if $\beta=0$ then $v_2=\pm v_1$ so $v_2$ is the
  same $\ell_\infty$-direction as $v_1$ contradicting the assumption that
  $v_1$ and $v_2$ are distinct $\ell_\infty$-directions.

  Thus, we assume $\alpha,\beta\not =0$ and, by negating either or
  both of $v_1$ and $u_1$ we may assume $\alpha,\beta>0$.  Consider
  the set of vectors
  \[\{v_1-u_1+\lambda v_2:\lambda\in \R\}.\] Since
  \[\|v_1-u_1+\lambda v_2\|=\|(1+\lambda\alpha)v_1-(1-\lambda
  \beta)u_1\|=\max(|1+\lambda\alpha|,|1-\lambda\beta\|),\] we see that
  $\lambda=0$ gives the unique vector of minimal norm in this set, and
  that this vector has norm one which contradicts the above claim
  that, whenever the minimum norm on the line is not zero, there must
  be an interval of minimal norm.
  \end{proof}
  The next lemma shows that any set of $\ell_\infty$-directions combine
  to give an  $\ell_\infty$ subspace of $V$.

  \begin{lemma}\label{l:l-inf-components}
    Suppose that $v_1,v_2,\dots,v_k$ are any (distinct)
    $\ell_\infty$-directions with corresponding subspaces
    $U_1,U_2,\dots,U_k$. Then
    \[
    V=\left(\spanof{v_1}\oplus\spanof{v_2}\oplus\dots\oplus \spanof{v_k}
      \oplus \bigcap_{i=1}^k U_i \right)_\infty.
    \]
  \end{lemma}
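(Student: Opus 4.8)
The plan is to induct on $k$, using Lemma~\ref{l:l-inf-orthogonal} to peel off one $\ell_\infty$-direction at a time. The case $k=1$ is the definition of an $\ell_\infty$-direction. For the inductive step, suppose the decomposition holds for $v_1,\dots,v_{k-1}$, so that $V=\bigl(\spanof{v_1}\oplus\dots\oplus\spanof{v_{k-1}}\oplus W\bigr)_\infty$ where $W=\bigcap_{i=1}^{k-1}U_i$. We want to show that $v_k$ lies in $W$ and is still an $\ell_\infty$-direction \emph{within} $W$, with corresponding subspace $W\cap U_k$; the desired formula then follows by combining the two $\ell_\infty$ splittings.

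First, $v_k\in W$: by Lemma~\ref{l:l-inf-orthogonal}, since $v_k$ is distinct from each $v_i$ ($i<k$), we have $v_k\in U_i$ for every $i<k$, hence $v_k\in W$. Next I would verify that the restriction of the norm to $W$ makes $v_k$ an $\ell_\infty$-direction there. The key point is that the decomposition $V=(\spanof{v_1}\oplus\dots\oplus\spanof{v_{k-1}}\oplus W)_\infty$ is \emph{isometric} — the norm on $V$ is the max of $|\lambda_1|,\dots,|\lambda_{k-1}|$ and the $W$-norm — so $W$ with its induced norm is isometrically a subspace that is an $\ell_\infty$-summand of $V$. For $w\in W$, write $w=\alpha v_k + w'$ with $w'\in U_k\cap W$ (we need $w'\in W$: indeed $w-\alpha v_k\in W$ since both $w$ and $v_k$ are in $W$, and it is in $U_k$ by the $\ell_\infty$-direction property of $v_k$ in $V$, so it lies in $W\cap U_k$). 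Then $\|w\|_W=\|w\|_V=\|\alpha v_k+w'\|_V=\max(|\alpha|,\|w'\|_V)=\max(|\alpha|,\|w'\|_W)$, using that $v_k$ is an $\ell_\infty$-direction in $V$ with corresponding subspace $U_k\supseteq W\cap U_k\ni w'$. Hence $v_k$ is an $\ell_\infty$-direction in $W$ with corresponding subspace $W\cap U_k=\bigcap_{i=1}^k U_i$, so $W=\bigl(\spanof{v_k}\oplus\bigcap_{i=1}^k U_i\bigr)_\infty$. Substituting this back, $V=\bigl(\spanof{v_1}\oplus\dots\oplus\spanof{v_{k-1}}\oplus\spanof{v_k}\oplus\bigcap_{i=1}^k U_i\bigr)_\infty$, completing the induction.

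The main obstacle I anticipate is keeping the bookkeeping straight between the norm on $V$ and the induced norm on the intermediate summand $W$, and in particular checking that the $W$-component $w'$ of $w\in W$ genuinely lands in $W\cap U_k$ rather than merely in $U_k$. This is exactly the place where one uses that $W$ is an \emph{isometric} $\ell_\infty$-summand (so that decomposing $w$ with respect to $v_k$ can be done inside $W$) together with the uniqueness of the subspace corresponding to $v_k$. One mild alternative to the induction would be a direct argument: every $v\in V$ can be written $v=\sum_i\lambda_i v_i + w$ with $w\in\bigcap U_i$, by successively projecting out each $v_i$-coordinate using its $\ell_\infty$-splitting, and then one shows $\|v\|=\max(|\lambda_1|,\dots,|\lambda_k|,\|w\|)$ by peeling the coordinates off one at a time; but the inductive packaging above is cleaner and reuses Lemma~\ref{l:l-inf-orthogonal} directly.
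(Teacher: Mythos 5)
Your proof is correct and follows essentially the same route as the paper: both peel off one $\ell_\infty$-direction at a time, using Lemma~\ref{l:l-inf-orthogonal} to place $v_k$ in $\bigcap_{i<k}U_i$ and the observation that $w-\alpha v_k$ stays in $W\cap U_k$ to keep the complement inside the intersection. The paper merely organises the same computations as three separate steps (spanning, directness of the sum, and the norm identity via iterated application of the $\ell_\infty$-direction property) rather than packaging them into a single induction on $k$.
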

  \begin{proof}
    First we show inductively that we can write any vector $v$ as
    $\sum_{i=1}^j \lambda_iv_i+ w_j$ where
    $w_j\in\bigcap_{i=1}^jU_i$. For $j=1$ it is just the definition of
    an $\ell_\infty$-direction. Suppose it holds for $j$. Then since
    $v_{j+1}$ is an $\ell_\infty$-direction we can write
    $w_j=\lambda_{j+1}v_{j+1}+w_{j+1}$ for some $w_{j+1}\in
    U_{j+1}$. Since, for each $1\le i\le j$, $w_j\in U_i$ and
    $v_{j+1}\in U_i$ we see that $w_{j+1}\in U_i$. Hence $w_{j+1}\in
    \bigcap_{i=1}^{j+1}U_i$ and the induction is complete.

    Next we show that the sum 
    \[
    \spanof{v_1}\oplus\spanof{v_2}\oplus\dots\oplus \spanof{v_k}
    \oplus \bigcap_{i=1}^k U_i 
    \]
    is direct.  Suppose that $u\in \bigcap_{i=1}^k U_i$, that
    $u+\sum_{i=1}^k\lambda_iv_i=0$ is a non-trivial linear relation,
    and that  $\lambda_j\not=0$. By Lemma~\ref{l:l-inf-orthogonal},
    $v_i\in U_j$ for all $i\not=j$, and obviously $u\in U_j$. Hence
    $v_j=\frac{1}{\lambda_j}\left(-u-\sum_{i\not=j}\lambda_iv_i\right)\in
    U_j$ which is a contradiction.

    To complete the proof observe that, by applying the
    $\ell_\infty$-direction property inductively, we have
    \[
    \left\|\sum_{i=1}^j \lambda_iv_i+ u\right\| = \max
    (|\lambda_1|,|\lambda_2|,\dots,|\lambda_j|,\|u\|)
    \]
  for any $j$, $\lambda_i\in\R$ and $u\in \bigcap_{i=1}^jU_i$. Taking $j=k$
  gives the result.
  \end{proof}
  Thus we see that the $\ell_\infty$-decomposition is unique in the
  strongest possible sense: namely that the $\ell_\infty^d$-component
  is the space spanned by \emph{all} the $\ell_\infty$-directions. We
  sum this up in the following proposition.
  \begin{proposition}\label{p:l-inf-decomposition}
    Suppose $V$ is a finite-dimensional normed space. Then there is a
    unique maximal space $W$ isometric to $\ell_\infty^d$, for some $d$,
    with the property that there is a subspace $U$ with $V=U\oplus W$
    and $\|u+w\|=\max(\|u\|,\|w\|)$, for any $u\in U$ and $w\in W$.

    Moreover, if $v_1,v_2,\dots,v_d$ are all the
    $\ell_\infty$-directions with corresponding subspaces
    $U_1,U_2,\dots,U_d$ then $W=\spanof{v_1,v_2,\dots,v_d}$ and
    $U=\bigcap_{i=1}^d U_i$.
  \end{proposition}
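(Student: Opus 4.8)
The plan is to extract everything from Lemmas~\ref{l:l-inf-orthogonal} and~\ref{l:l-inf-components} together with the uniqueness of corresponding subspaces. First I would note that there are only finitely many $\ell_\infty$-directions: if $v_1,\dots,v_k$ are distinct ones, then Lemma~\ref{l:l-inf-components} places the direct sum $\spanof{v_1}\oplus\dots\oplus\spanof{v_k}$ inside $V$, so $k\le\dim V$. Let $v_1,\dots,v_d$ be \emph{all} of them (the degenerate case $d=0$, in which $W$ is the zero space, causes no trouble), with corresponding subspaces $U_1,\dots,U_d$, and set $W=\spanof{v_1,\dots,v_d}$ and $U=\bigcap_{i=1}^d U_i$. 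Applying Lemma~\ref{l:l-inf-components} to the full list gives $V=(\spanof{v_1}\oplus\dots\oplus\spanof{v_d}\oplus U)_\infty$ together with the norm formula $\|\sum_i\lambda_iv_i+u\|=\max(|\lambda_1|,\dots,|\lambda_d|,\|u\|)$. In particular the $v_i$ are linearly independent, $e_i\mapsto v_i$ is an isometry from $\lid$ onto $W$, and $\|u+w\|=\max(\|u\|,\|w\|)$ for $u\in U$, $w\in W$; so $W$ together with the complement $U$ has the stated property.

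The heart of the argument is maximality. Let $W'$ be any subspace isometric to $\ell_\infty^{d'}$ for which there is a complement $U'$ with $\|u'+w'\|=\max(\|u'\|,\|w'\|)$ for all $u'\in U'$, $w'\in W'$; I would show $W'\subseteq W$. Fix the basis $f_1,\dots,f_{d'}$ of $W'$ corresponding under the isometry to the standard basis of $\ell_\infty^{d'}$, so that $\|\sum_j\beta_jf_j\|=\max_j|\beta_j|$ and each $f_j$ is a unit vector. I claim each $f_j$ is an $\ell_\infty$-direction with corresponding subspace $U'\oplus\spanof{f_i:i\ne j}$: writing an arbitrary element of this subspace as $u'+\sum_{i\ne j}\beta_if_i$, the two hypotheses on $W'$ give $\|\alpha f_j+u'+\sum_{i\ne j}\beta_if_i\|=\max(\|u'\|,|\alpha|,\max_{i\ne j}|\beta_i|)$ and $\|u'+\sum_{i\ne j}\beta_if_i\|=\max(\|u'\|,\max_{i\ne j}|\beta_i|)$, and comparing these shows the first equals $\max(|\alpha|,\|u'+\sum_{i\ne j}\beta_if_i\|)$, as required. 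Hence $f_j\in\{v_1,\dots,v_d\}$ (recalling that $v$ and $-v$ are the same $\ell_\infty$-direction), so $W'=\spanof{f_1,\dots,f_{d'}}\subseteq W$. Applied with $W'=W$ this also re-confirms that $W$ is itself such a subspace, so $W$ is the unique maximal one.

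Finally, for the ``moreover'' clause I would pin down $U$ using uniqueness of corresponding subspaces. For each $i$, the norm formula above shows that $U\oplus\spanof{v_j:j\ne i}$ is a corresponding subspace for $v_i$, hence equals $U_i$; and the same computation shows that \emph{any} complement $\tilde U$ of $W$ enjoying the $\ell_\infty$-summand property likewise satisfies $\tilde U\oplus\spanof{v_j:j\ne i}=U_i$. Intersecting over $i$ and using that the sum $\spanof{v_1}\oplus\dots\oplus\spanof{v_d}\oplus U$ is direct, any vector lying in every $U_i$ must have all its $v_i$-coordinates zero, so $\bigcap_{i=1}^dU_i=U$ and, by the same token, $\tilde U=U$; thus the complement of $W$ is unique and equals $\bigcap_{i=1}^d U_i$. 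I expect the only genuinely delicate point to be the norm identity in the maximality step, where one must use both that $W'$ splits off in $\ell_\infty$-fashion and that $W'$ is internally $\ell_\infty$; everything else is bookkeeping with the two decomposition lemmas.
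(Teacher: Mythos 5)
Your proposal is correct and follows essentially the same route as the paper: build $W$ from the full (finite) set of $\ell_\infty$-directions via Lemma~\ref{l:l-inf-components}, then show any competing $(U'\oplus W')_\infty$ summand has each of its $\ell_\infty$-basis vectors an $\ell_\infty$-direction with corresponding subspace $U'\oplus\spanof{f_i:i\ne j}$, forcing $W'\subseteq W$. Your extra remarks (explicit finiteness of the set of $\ell_\infty$-directions, and uniqueness of the complement $U$) are correct minor strengthenings not needed for the statement as given.
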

  \begin{proof}
    As in the statement of the proposition let $v_1,v_2,\dots,v_d$ be
    all the $\ell_\infty$-directions, $W=\spanof{v_1,v_2,\dots,v_d}$,
    and $U=\bigcap_{i=1}^dU_i$ where $U_i$ is the corresponding
    subspace to $v_i$. By Lemma~\ref{l:l-inf-components} $V=U\oplus
    W$, and for any $u\in U$ and $w=\sum_{i=1}^d\lambda_iv_i\in W$ we
    have $\|w\|=\max(|\lambda_1|,|\lambda_2|,\dots,|\lambda_d|)$ so
    $W$ is isometric to $\lid$ and, by By
    Lemma~\ref{l:l-inf-components} again,
    \[
    \|u+w\|=\max(\|u\|,|\lambda_1|,|\lambda_2|,\dots,|\lambda_d|)=\max(\|u\|,\|w\|)
    \]
    as required.
    
    To complete the proof suppose that $W'$ is any subspace isometric
    to $\ell_\infty^{d'}$ for some ${d'}$ and that $U'$ is a subspace with
    the property that $V=U'\oplus W'$ and
    $\|u'+w'\|=\max(\|u'\|,\|w'\|)$ for any $u'\in U'$ and $w'\in
    W'$. Let $e_1,e_2,\dots,e_{d'}$ be the natural basis of $W'$ viewed
    as $\ell_\infty^{d'}$. We see that, for any
    $\lambda_1,\lambda_2,\dots,\lambda_{d'}$ and any $u'\in U'$,
    \begin{align*}
    \|u'+\sum_{i=1}^{d'}\lambda_ie_i\|&=\max\left(\|u'\|,\|\sum_{i=1}^{d'}\lambda_ie_i\|\right)\\
    &=\max\left(\|u'\|,|\lambda_1|,|\lambda_2|,\dots,|\lambda_{d'}|\right)\\
    &=\max \left(|\lambda_1|,\|u+\sum_{i=2}^{d'}\lambda_ie_i\|\right),
  \end{align*}
  so, in particular, $e_1$ is an $\ell_\infty$-direction with
  corresponding subspace $U'\oplus\spanof{e_2,e_3,\dots,e_{d'}}$. Thus
  $e_1$ is one of the $v_i$ or $-v_i$ and, in particular, $e_1\in
  W$. Since this is true for each $e_i$, $1\le i\le {d'}$, we see that
  $W'\subseteq W$.
\end{proof}

\begin{corollary}\label{c:isom-decomp}
  Let $Q$ be a linear isometry of a finite-dimensional normed spaced
  $V$ with $\ell_\infty$-decomposition $U\oplus \ell_\infty^d$. Then
  $Q$ factorises over the decomposition as $Q_U\oplus
  Q_{\ell_\infty^d}$ and each factor is an isometry.
\end{corollary}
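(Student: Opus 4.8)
The plan is to leverage Proposition~\ref{p:l-inf-decomposition}, which characterises the $\ell_\infty^d$-component $W$ as the span of \emph{all} $\ell_\infty$-directions and $U$ as the intersection of the corresponding subspaces. The key observation is that the property of being an $\ell_\infty$-direction is preserved by any linear isometry: if $v$ is a unit vector with $V=(\spanof{v}\oplus U_v)_\infty$, then for $Q$ a linear isometry, $Qv$ is again a unit vector and, applying $Q$ to the identity $\|\alpha v + u\| = \max(|\alpha|,\|u\|)$, we get $\|\alpha (Qv) + Qu\| = \max(|\alpha|,\|Qu\|)$ for all $\alpha\in\R$, $u\in U_v$; since $Q$ is a linear bijection, $QU_v$ is a subspace and $V = \spanof{Qv}\oplus QU_v$, so $Qv$ is an $\ell_\infty$-direction with corresponding subspace $QU_v$.

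From this, the argument is short. Since $Q$ permutes the set of $\ell_\infty$-directions (it maps each such direction to another, and $Q^{-1}$, also a linear isometry, does likewise, so the map on directions is a bijection), and $W = \spanof{v_1,\dots,v_d}$ is the span of all of them, we get $QW = W$. For $U$: each $\ell_\infty$-direction $v_i$ has corresponding subspace $U_i$, and $QU_i$ is the corresponding subspace of the $\ell_\infty$-direction $Qv_i$; by uniqueness of corresponding subspaces and the fact that $Q$ permutes the $v_i$, we have $Q\bigl(\bigcap_i U_i\bigr) = \bigcap_i QU_i = \bigcap_i U_i$, i.e.\ $QU = U$. Hence $Q$ maps each summand to itself, so writing any $v\in V$ as $u+w$ with $u\in U$, $w\in W$, we have $Qv = Qu + Qw$ with $Qu\in U$ and $Qw\in W$; defining $Q_U = Q|_U$ and $Q_{\ell_\infty^d} = Q|_W$ gives the factorisation $Q = Q_U\oplus Q_{\ell_\infty^d}$.

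Finally, each factor is an isometry: $Q_U = Q|_U$ is the restriction of an isometry to a subspace, hence an isometry (and it is a bijection of $U$ since $Q$ is a bijection of $V$ preserving $U$), and similarly for $Q_{\ell_\infty^d}$. I do not anticipate a serious obstacle here; the only point requiring a little care is confirming that $Q$ acts as a genuine \emph{bijection} on the finite set of $\ell_\infty$-directions (so that the intersection $\bigcap_i U_i$ is genuinely preserved rather than merely mapped into a possibly different intersection), but this is immediate from applying the preceding observation to both $Q$ and $Q^{-1}$.
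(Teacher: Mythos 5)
Your proposal is correct and follows essentially the same route as the paper: show that a linear isometry sends each $\ell_\infty$-direction (with its corresponding subspace) to another, hence permutes them, hence preserves both $W=\spanof{v_1,\dots,v_d}$ and $U=\bigcap_i U_i$, and then factorise by restriction. The paper's own proof is this argument almost verbatim.
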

We remark that there are direct proofs of this result, based on
Proposition~\ref{p:l-inf-decomposition}; our proof, whilst a little
longer, will be useful for the next result
\begin{proof}
  First, observe that, since $Q$ is linear, factorising over the
  decomposition is the same as saying $Q(U)\subseteq U$ and
  $Q(\ell_\infty^d)\subseteq \ell_\infty^d$, and this is what we shall
  show.

  Suppose $v_1,v_2,\dots,v_d$ are the $\ell_\infty$-directions with
  corresponding subspaces $U_1,U_2,\dots U_d$. Let $v_i'=Q(v_i)$ and
  $U_i'=Q(U_i)$ for each $i$. We claim that $v_i'$ is an
  $\ell_\infty$-direction with subspace $U_i'$. Indeed, given $v'\in
  V$ let $v=Q^{-1}(v')$. Since $v_i$ is an $\ell_\infty$-direction we
  can write $v=\alpha v_i+u_i$ for some $u_i\in U_i$ and we have
  $\|v\|=\max(|\alpha|,\|u_i\|)$. Since $Q$ is linear, and writing
  $u_i'$ for $Q(u_i)$, this implies that $v'=Q(v)=Q(\alpha
  v_i+u_i)=\alpha v_i'+u_i'$ with $u_i'\in U_i'$. Since $Q$ is an
  isometry we have
  \[
  \|v'\|=\|v\|=\max(|\alpha|,\|u_i\|)=\max(|\alpha|,\|u_i'\|)
  \]
  as claimed.

  Thus $Q$ permutes the $\ell_\infty$-directions (possibly negating
  some of them) and, in particular, maps
  $\ell_\infty^d=\spanof{v_1,v_2,\dots,v_d}$ to itself. Also, $Q$
  permutes the corresponding subspaces so $U=\bigcap_{i=1}^dU_i$ is
  also mapped to itself. As observed above this shows that $Q$
  factorises as $Q|_U\oplus Q|_{\ell_\infty^d}$ and, since the factors
  are just the restrictions of $Q$ to $U$ and $\ell_\infty^d$
  respectively, we see that each factor is an isometry.
  \end{proof}
  The proof of Corollary~\ref{c:isom-decomp} actually describes what
  the isometries of $\lid$ are.
\begin{corollary}\label{c:l-inf-isom}
  Suppose that $f$ is an (bijective) isometry of $\ell_\infty^d$. Then
  there is a permutation $\sigma$ of $[d]$ and
  $\eps=(\eps_1,\eps_2,\dots,\eps_d)\in\{-1,+1\}^d$ such that $f$ is
  the linear map that sends each basis vector $e_i$ to $\eps_i
  e_{\sigma(i)}$, combined with a translation.
\end{corollary}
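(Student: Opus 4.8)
The plan is to reduce to a linear isometry and then apply the classification of $\ell_\infty$-directions already available. By Corollary~\ref{c:fin-dim-mazur-ulam} the isometry $f$ is affine, so setting $Q(x)=f(x)-f(0)$ gives a linear isometry $Q$ of $\lid$; since $f=Q+f(0)$, it suffices to prove that $Q$ is a signed coordinate permutation, i.e.\ that $Q(e_i)=\eps_i e_{\sigma(i)}$ for a permutation $\sigma$ of $[d]$ and signs $\eps_i$.

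The one point that needs checking is that the $\ell_\infty$-directions of $\lid$ are precisely $\pm e_1,\dots,\pm e_d$. On the one hand, each $e_i$ is an $\ell_\infty$-direction with corresponding subspace $\spanof{e_j:j\neq i}$, since $\|\alpha e_i+u\|=\max(|\alpha|,\|u\|)$ for every $u\in\spanof{e_j:j\neq i}$. On the other hand, by Lemma~\ref{l:l-inf-components} any collection of distinct $\ell_\infty$-directions is linearly independent, so $\lid$ has at most $d$ of them; hence $\pm e_1,\dots,\pm e_d$ are all of them.

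Finally, recall from the proof of Corollary~\ref{c:isom-decomp} that a linear isometry carries each $\ell_\infty$-direction to an $\ell_\infty$-direction (with corresponding subspace the $Q$-image of the original one). Applying this to $e_1,\dots,e_d$ and using the previous paragraph, $Q(e_i)=\eps_i e_{\sigma(i)}$ for some $\eps_i\in\{-1,+1\}$ and some $\sigma\colon[d]\to[d]$. Since $Q$ is injective, the vectors $Q(e_i)$ are linearly independent, which forces $\sigma$ to be injective, hence a permutation; and since $e_1,\dots,e_d$ is a basis, $Q$ is exactly the linear map $e_i\mapsto\eps_i e_{\sigma(i)}$, so $f=Q+f(0)$ has the claimed form. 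I do not expect a genuine obstacle here: the substantive work lies in Lemma~\ref{l:l-inf-components}, Proposition~\ref{p:l-inf-decomposition} and the proof of Corollary~\ref{c:isom-decomp}, and the only new input is the elementary identification of the $\ell_\infty$-directions of $\lid$ with the coordinate directions.
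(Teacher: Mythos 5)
Your proof is correct and follows essentially the same route as the paper: reduce to a linear isometry via Mazur--Ulam, observe that the coordinate directions are exactly the $\ell_\infty$-directions of $\lid$, and invoke the argument of Corollary~\ref{c:isom-decomp} to see that a linear isometry permutes them up to sign. You merely spell out the details that the paper compresses into ``obviously'' (the identification of the $\ell_\infty$-directions with $\pm e_1,\dots,\pm e_d$, using Lemma~\ref{l:l-inf-components} for the upper bound on their number) and the injectivity argument for $\sigma$.
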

\begin{proof}
  Define $\hat f$ by $\hat f(x)=f(x)-f(0)$.  By the Mazur-Ulam theorem
  $\hat f$ is linear.  The proof of Corollary~\ref{c:isom-decomp}
  shows that $\hat f$ permutes the basis vectors of $\lid$ (which are
  obviously the $\ell_\infty$-directions), possibly changing the
  sign. The result follows.
\end{proof}

\section{Extending Step-Isometries from $\cds$ to $V$}\label{s:extend}
Suppose that $f$ is a step isometry on a dense set $S$ in $V$.  In
this section we show that $f$ extends to a continuous step-isometry
$\bar f\colon V\to V$.

As one would expect we shall define $\bar f$ in terms of sequences in
$S$. We start by proving some simple results about such sequences.
\begin{lemma}\label{l:seq-conv}
  Suppose $f$ is a step-isometry on $\cds$, that $(x_n)$ is a
  sequence in $\cds$ converging to $x$, and that $f(x_n)$ converges to
  $x'$. Then, for any $y\in \cds$ and $k\in \N$ which satisfy $\|x-y\|<k$
  we have $\|x'-f(y)\|\le k$.
\end{lemma}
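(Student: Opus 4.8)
The plan is a direct limiting argument built entirely on the definition of step-isometry and continuity of the norm. First I would use convergence of $(x_n)$: since $x_n\to x$ and the norm is continuous, $\|x_n-y\|\to\|x-y\|<k$, so there exists $N$ with $\|x_n-y\|<k$ for all $n\ge N$. Because $k$ is an integer, $\|x_n-y\|<k$ forces $\lfloor\|x_n-y\|\rfloor\le k-1$ for all such $n$.

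Next I would invoke the hypothesis that $f$ is a step-isometry on $\cds$. Since $x_n,y\in\cds$, we have $\lfloor\|f(x_n)-f(y)\|\rfloor=\lfloor\|x_n-y\|\rfloor\le k-1$ for every $n\ge N$, and hence $\|f(x_n)-f(y)\|<k$ for all $n\ge N$.

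Finally I would pass to the limit in $n$: since $f(x_n)\to x'$, continuity of the norm gives $\|f(x_n)-f(y)\|\to\|x'-f(y)\|$. A convergent sequence of reals all strictly less than $k$ has limit at most $k$, so $\|x'-f(y)\|\le k$, which is the claim.

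There is essentially no obstacle here; the only point that requires a moment's care is the (deliberate) loss of strictness in the conclusion: from $\|x_n-y\|<k$ one can only conclude $\|x'-f(y)\|\le k$, not $<k$, both because $\lfloor\cdot\rfloor$ has ``width one'' and because a limit of strict inequalities need not be strict. This is precisely why the lemma is stated with $\le$, and it will be harmless for the subsequent construction of the extension $\bar f$.
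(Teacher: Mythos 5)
Your proof is correct and follows the same route as the paper's: use convergence to get $\|x_n-y\|<k$ for large $n$, apply the step-isometry property to conclude $\|f(x_n)-f(y)\|<k$, and pass to the limit to obtain the non-strict inequality. You simply spell out the floor-function step that the paper leaves implicit.
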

\begin{proof}
  This is trivial. Indeed, suppose $\|x-y\|<k$. Then, for all
  sufficiently large $n$, $\|x_n-y\|<k$. Thus, since $f$ is a step
  isometry, $\|f(x_n)-f(y)\|<k$. Hence $\|x'-f(y)\|\le k$.
\end{proof}

\begin{lemma}\label{l:level-3-step-isom}
  Suppose $f$ is a step-isometry on $\cds$, that $(x_n),(y_n)$ are
  sequences in $\cds$ converging to $x$ and $y$ respectively, and that
  $f(x_n),f(y_n)$ converge to $x'$ and $y'$ respectively. Then, for
  any $k\in \N$, $\|x-y\|<3k$ if and only if $\|x'-y'\|< 3k$.

  In particular, for any $k\in \N$, if $y\in \cds$ then $\|x-y\|<3k$
  if and only if $\|x'-f(y)\|< 3k$.
\end{lemma}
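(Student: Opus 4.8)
The plan is to bootstrap from Lemma~\ref{l:seq-conv} by inserting two auxiliary points of $\cds$ between $x$ and $y$. First I would dispose of the trivial case $k=0$: then $\|x-y\|<0$ and $\|x'-y'\|<0$ are both impossible, so the equivalence holds vacuously, and from now on we may assume $k\ge 1$. Next I would observe that it is enough to prove the single implication ``$\|x-y\|<3k$ implies $\|x'-y'\|<3k$''. Indeed, since $f$ is a bijective step-isometry of $\cds$, so is $f^{-1}$; and the sequences $(f(x_n))$ and $(f(y_n))$ converge to $x'$ and $y'$ while their $f^{-1}$-images $(x_n)$ and $(y_n)$ converge to $x$ and $y$, so applying the implication to $f^{-1}$ yields the reverse implication. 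Finally, the ``in particular'' clause will be immediate from the main statement on taking $(y_n)$ to be the constant sequence at $y\in\cds$, for which $f(y_n)=f(y)$ and hence $y'=f(y)$.

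For the implication itself, suppose $\|x-y\|<3k$. The key step is a three-way split: set $w_1=x+\tfrac13(y-x)$ and $w_2=x+\tfrac23(y-x)$, so that $\|x-w_1\|=\|w_1-w_2\|=\|w_2-y\|=\tfrac13\|x-y\|<k$. Using that $\cds$ is dense, choose $z_1,z_2\in\cds$ so close to $w_1$ and $w_2$ respectively that the three strict inequalities $\|x-z_1\|<k$, $\|z_1-z_2\|<k$, $\|z_2-y\|<k$ still hold. Then Lemma~\ref{l:seq-conv}, applied to the sequence $(x_n)$ with the point $z_1$ and the integer $k$, gives $\|x'-f(z_1)\|\le k$; applied to the sequence $(y_n)$ with the point $z_2$ and the integer $k$, it gives $\|y'-f(z_2)\|\le k$. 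Meanwhile, as $z_1,z_2\in\cds$ and $\|z_1-z_2\|<k$ force $\lfloor\|z_1-z_2\|\rfloor\le k-1$, the step-isometry property gives $\lfloor\|f(z_1)-f(z_2)\|\rfloor\le k-1$, i.e.\ $\|f(z_1)-f(z_2)\|<k$. The triangle inequality then gives $\|x'-y'\|\le\|x'-f(z_1)\|+\|f(z_1)-f(z_2)\|+\|f(z_2)-y'\|<k+k+k=3k$, as required.

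I expect no real obstacle here; the lemma is essentially a routine ``spreading out'' of Lemma~\ref{l:seq-conv}. The one point worth care is why a three-way split is needed rather than a two-way one: each application of Lemma~\ref{l:seq-conv} delivers only the non-strict bound $\le k$, so splitting the segment from $x$ to $y$ into pieces of lengths $<k$ and $<2k$ would give merely $\|x'-y'\|\le k+2k=3k$. Routing the slack instead through the direct comparison $\|f(z_1)-f(z_2)\|<k$ between two points of $\cds$ — where the floor in the step-isometry condition forces a strict inequality — is exactly what upgrades $\le 3k$ to $<3k$.
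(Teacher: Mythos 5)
Your proposal is correct and follows essentially the same route as the paper: split the segment into three pieces of length $<k$, use the density of $\cds$ to find two intermediate points of $\cds$, apply Lemma~\ref{l:seq-conv} at each end and the step-isometry property in the middle, and get the converse by applying the argument to $f^{-1}$. The extra care you take about where the strict inequality comes from (the middle step between two points of $\cds$) matches the structure of the paper's proof exactly.
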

\begin{proof} 
  Suppose that $\|x-y\|<3k$. Since $\cds$ is dense, we can pick $s,t\in
  \cds$ such that $\|x-s\|<k$, $\|s-t\|<k$ and $\|t-y\|<k$. By
  Lemma~\ref{l:seq-conv} $\|x'- f(s)\|\le k$ and $\|f(t)-y'\|\le
  k$. Also, since $f$ is a step-isometry on $\cds$, $\|f(s)-
  f(t)\|<k$. Hence, by the triangle inequality, $\|x'-y'\|<3k$.

  We obtain the converse by applying the above to $f^{-1}$ which is
  also a  step-isometry on $\cds$.

  The final part follows by taking the sequence $(y_n)$ to be the
  constant sequence $y$.
\end{proof}

\begin{lemma}\label{l:injective}
  Suppose $f$ is a step-isometry on $\cds$, that $(x_n),(y_n)$ are
  two sequences in $\cds$ converging to $x$, and that $f(x_n),f(y_n)$
  converge to $x'$ and $y'$ respectively. Then $x'=y'$.
\end{lemma}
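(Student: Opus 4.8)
The plan is to show $x'=y'$ by proving $\|x'-y'\|<3k$ for every $k\in\N$ with $k\ge 1$, which forces $\|x'-y'\|=0$. The natural tool is Lemma~\ref{l:level-3-step-isom}, but there is a subtlety: that lemma is stated for sequences $(x_n),(y_n)$ converging to \emph{distinct} limits $x$ and $y$ in the sense that it tells us $\|x-y\|<3k\iff\|x'-y'\|<3k$; here both sequences converge to the \emph{same} point $x$, so $\|x-y\|=0<3k$ trivially for every $k\ge1$, and the lemma immediately yields $\|x'-y'\|<3k$ for all such $k$. Hence $\|x'-y'\|=0$, i.e.\ $x'=y'$.

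So the proof is essentially a one-line application: interleave or simply invoke Lemma~\ref{l:level-3-step-isom} with the two given sequences $(x_n)$ and $(y_n)$, both tending to $x$, and with their images tending to $x'$ and $y'$ respectively. For each fixed $k\ge1$ we have $\|x-x\|=0<3k$, so the lemma gives $\|x'-y'\|<3k$. Letting $k\to\infty$ is not even needed; taking $k=1$ already gives $\|x'-y'\|<3$, but to get equality we let $k$ range over all positive integers — actually we just need that $\|x'-y'\|<3k$ holds for \emph{some} arbitrarily small bound. Since $3k$ cannot be made small, the cleaner route is: apply Lemma~\ref{l:level-3-step-isom} with the roles played so that the relevant threshold is as small as we like. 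Concretely, rescale is not available, so instead observe directly from Lemma~\ref{l:seq-conv}: for any $y\in\cds$ and $k\in\N$ with $\|x-y\|<k$ we get both $\|x'-f(y)\|\le k$ and $\|y'-f(y)\|\le k$ (the latter by applying Lemma~\ref{l:seq-conv} to the sequence $(y_n)$), hence $\|x'-y'\|\le 2k$. Now choose $y\in\cds$ with $\|x-y\|<k$ for $k=1$ to get $\|x'-y'\|\le 2$; this still is not zero.

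To actually get $x'=y'$ we must exploit density more carefully: given $\eps>0$ pick $k=1$ and a point $y\in S$ within $\eps$ of $x$, so $\|x-y\|<\eps<1$. Then Lemma~\ref{l:seq-conv} gives $\|x'-f(y)\|\le 1$ and $\|y'-f(y)\|\le 1$ — but this bound does not shrink with $\eps$, because the step-isometry only controls \emph{integer parts} of distances. The resolution, and the real content of the lemma, is to use the full strength of Lemma~\ref{l:level-3-step-isom}: pick $y\in S$ with $\|x-y\|$ small; then $\lfloor\|x_n-y\|\rfloor=0=\lfloor\|f(x_n)-f(y)\|\rfloor$ and likewise for $(y_n)$, so $f(x_n)$ and $f(y_n)$ each stay within the open unit ball of $f(y)$, giving $\|x'-f(y)\|\le1$, $\|y'-f(y)\|\le1$ — same obstruction. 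Thus the honest argument must instead run: suppose $x'\ne y'$ and derive a contradiction with Lemma~\ref{l:level-3-step-isom} applied to $f^{-1}$. Since $\|x'-y'\|>0$ there is $k$ with $\|x'-y'\|<3k$ false only if $\|x'-y'\|\ge 3$; but if $\|x'-y'\|\in(0,3)$ we have $\|x'-y'\|<3\cdot1$, so by the converse direction of Lemma~\ref{l:level-3-step-isom} (applied with $k=1$) we'd get $\|x-x\|<3$, which is true and yields no contradiction. Hence the genuine finish: one proves $\|x'-y'\|<3k$ for \emph{every} $k$, including using that for large $k$ the hypothesis $\|x-y\|=0<3k$ is automatic, and separately handles the possibility $\|x'-y'\|\ge 3$ by a scaling/iteration argument or by noting $f^{-1}$ is also a step-isometry so $x=x$ forces, via the same lemma applied in reverse, that the images cannot be far apart.

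\medskip

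\textbf{Revised plan (concise).} Fix $k\in\N$ with $k\ge1$. Since $\|x-x\|=0<3k$, apply Lemma~\ref{l:level-3-step-isom} to the sequences $(x_n)$ and $(y_n)$ (both converging to $x$, with images converging to $x'$ and $y'$): this is exactly the hypothesis needed, and the conclusion is $\|x'-y'\|<3k$. As this holds for $k=1$ we get $\|x'-y'\|<3$. To upgrade to equality, apply the same lemma to the step-isometry $f^{-1}$ on $\cds$ with the sequences $(f(x_n))$ and $(f(y_n))$, which converge to $x'$ and $y'$ and whose $f^{-1}$-images $(x_n),(y_n)$ converge to $x$: the lemma now gives, for every $k\ge1$, that $\|x'-y'\|<3k\iff\|x-x\|<3k$, the right side always true, so again $\|x'-y'\|<3k$ — no improvement by itself. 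The actual gap-closing step: iterate. For $m\ge1$ consider the constant-ratio trick is unavailable, so instead I would argue that if $x'\ne y'$ then, choosing $k$ with $3k\le\|x'-y'\|$... wait, such $k$ need not exist. In fact $\|x'-y'\|<3$ already, so this cannot happen; we are stuck with the bound $3$. \textbf{The main obstacle}, therefore, is precisely that a step-isometry only controls distances up to integer parts, so the $3k$-mechanism of Lemma~\ref{l:level-3-step-isom} cannot by itself distinguish two points at distance less than $3$. I expect the intended proof overcomes this by a rescaling device: note that if $f$ is a step-isometry then so is the map $x\mapsto f(x/N)\cdot N$ in a suitable sense — or more precisely, one applies Lemma~\ref{l:level-3-step-isom} after scaling the whole configuration by $1/N$, so that the threshold $3k$ becomes effectively $3k/N$, which can be made arbitrarily small; this forces $\|x'-y'\|=0$. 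Writing this rescaling carefully — checking that the scaled map is still a step-isometry on the (scaled) dense set and that limits behave correctly — is the one place requiring genuine care; everything else is a direct citation of the preceding two lemmas.
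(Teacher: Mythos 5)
You correctly diagnose the obstacle: applying Lemma~\ref{l:level-3-step-isom} to the two sequences only yields $\|x'-y'\|<3$, and no choice of $k$ improves this, because a step-isometry carries no metric information below the integer scale. But your proposed resolution --- a rescaling device --- does not work, and this is a genuine gap rather than a routine detail. If you conjugate by a dilation, setting $h(x)=f(Nx)/N$, then $h$ is a step-isometry only with respect to the rescaled norm $N\|\cdot\|$, and in that norm the threshold ``distance less than $3k$'' applied to the points $x'/N$, $y'/N$ translates back to exactly the bound $\|x'-y'\|<3k$ in the original norm. The scaling of the map cancels the scaling of the norm, so nothing shrinks; the information simply is not present at finer scales (consider $f(x)=\lfloor x\rfloor+g(x-\lfloor x\rfloor)$ on $\R$, which genuinely moves points within unit cells). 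No variant of this idea can force $\|x'-y'\|=0$.

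The missing idea in the paper's proof is a \emph{separating third point}, not finer distance control. Suppose $x'\ne y'$. The set $\{v\in V:\|x'-v\|<3\text{ and }\|y'-v\|>3\}$ is open and non-empty (take $v=x'+t(x'-y')/\|x'-y'\|$ with $t$ slightly below $3$ and $t>3-\|x'-y'\|$), so by density it contains a point $z'\in S$; let $z=f^{-1}(z')$. Now apply the final part of Lemma~\ref{l:level-3-step-isom} twice with $k=1$: once to the sequence $(x_n)\to x$ with images tending to $x'$, giving $\|x-z\|<3$ since $\|x'-z'\|<3$; and once to the sequence $(y_n)$, which \emph{also} converges to $x$ but whose images tend to $y'$, giving $\|x-z\|\ge 3$ since $\|y'-z'\|>3$. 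This contradiction uses only the integer-scale information you already had; the point is that $x'$ and $y'$, if distinct, can be separated by some sphere of integer radius about a point of $S$, and both conclusions are pulled back to statements about the \emph{same} distance $\|x-z\|$.
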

\begin{proof}
  Suppose that $x'\not =y'$. Then the set
  \[
  \{v\in V: \|x'-v\|<3 \text{ and }\|y'-v\|>3\}
  \]
  is open and non-empty. Since $\cds$ is dense in $V$, there exists
  $z'\in \cds$ with $\|x'-z'\|<3$ and $\|y'-z'\|>3$. Let
  $z=f^{-1}(z')$. Then, Lemma~\ref{l:level-3-step-isom} applied to the
  sequences $(x_n)$ and $(y_n)$ implies $\|x-z\|<3$ and $\|x- z\|\ge 3$
  which is a contradiction.
\end{proof}

\begin{lemma}\label{l:convergence}
  Suppose that $(x_n)$ is a sequence in $\cds$ that converges in $V$. Then
  $f(x_n)$ is a convergent sequence.
\end{lemma}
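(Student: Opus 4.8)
The plan is to combine compactness of bounded sets in the finite-dimensional space $V$ with the injectivity statement of Lemma~\ref{l:injective}. Write $x=\lim_n x_n$, and recall that $f$ is a step-isometry on the dense set $\cds$.

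First I would check that the sequence $\bigl(f(x_n)\bigr)$ is bounded. Fix any point $y\in \cds$. Since $(x_n)$ converges it is bounded, so $M:=\sup_n\|x_n-y\|$ is finite; put $k=\lfloor M\rfloor+1\in\N$, so that $\|x_n-y\|<k$ for every $n$. Because $f$ is a step-isometry, $\lfloor\|f(x_n)-f(y)\|\rfloor=\lfloor\|x_n-y\|\rfloor\le k-1$, and hence $\|f(x_n)-f(y)\|<k$ for all $n$. Thus $\bigl(f(x_n)\bigr)$ lies entirely in the closed ball $B(f(y),k)$, which is compact since $V$ is finite-dimensional.

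It follows that every subsequence of $\bigl(f(x_n)\bigr)$ has a convergent sub-subsequence, and I claim that all such subsequential limits coincide. Indeed, suppose $f(x_{n_i})\to x'$ and $f(x_{m_j})\to y'$ along two subsequences. Both $(x_{n_i})$ and $(x_{m_j})$ are subsequences of $(x_n)$, so both converge to $x$; applying Lemma~\ref{l:injective} to these two sequences gives $x'=y'$. Finally, a bounded sequence in a finite-dimensional normed space all of whose subsequential limits agree must converge to that common value: otherwise there would be an $\eps>0$ and a subsequence staying at distance at least $\eps$ from the common value $x'$, and by compactness this subsequence would have a further subsequence converging to a point at distance at least $\eps$ from $x'$, contradicting uniqueness. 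Hence $\bigl(f(x_n)\bigr)$ converges.

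I do not anticipate any genuine obstacle here: the only non-routine ingredient is the already-established injectivity of the limiting behaviour (Lemma~\ref{l:injective}), and everything else is the standard ``bounded plus unique subsequential limit implies convergent'' argument together with finite-dimensional compactness. The one point to be slightly careful about is the reduction to boundedness, which just needs a single fixed reference point in $\cds$ and the step-isometry property.
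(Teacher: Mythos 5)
Your proof is correct and follows essentially the same route as the paper: establish boundedness of $\bigl(f(x_n)\bigr)$ via the step-isometry property, extract a convergent subsequence by finite-dimensional compactness, and use Lemma~\ref{l:injective} to show all subsequential limits coincide. The only cosmetic difference is that the paper bounds the tail using $\|x_n-x_m\|<1$ for a Cauchy index $m$ rather than a fixed reference point $y\in\cds$.
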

\begin{proof}
  Since $(x_n)$ is convergent, there is an $m$ such that, for all $n>m$,
  we have $\|x_n-x_m\|<1$. Hence, since $f$ is a step-isometry,
  $\|f(x_n)-f(x_m)\|<1$ for all $n>m$; i.e., $f(x_n)$ is a bounded sequence. Thus,
  since $V$ is finite-dimensional, there is a subsequence $(x_{n_i})$
  such that $f(x_{n_i})$ converges to some value $x'$ say.

  Suppose that $f(x_n)$ does not converge to $x'$. Then there exists a
  subsequence bounded away from $x'$. As above we can take a further
  subsequence which converges and is bounded away from $x'$; in
  particular it must converge to some value $x''\not=x'$. But this
  contradicts Lemma~\ref{l:injective}.
\end{proof}
\begin{corollary}\label{c:bijective-S}
  Suppose $f$ is a step-isometry on $\cds$. Then there is a
  unique continuous function $\bar f\colon V\to V$ that extends $f$. 
\end{corollary}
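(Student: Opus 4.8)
The plan is to define $\bar f$ directly as a limit along approximating sequences, use the preceding lemmas to check that this is well posed, and then verify continuity by a short diagonal argument.

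First I would define $\bar f$. Given $v\in V$, use the density of $\cds$ to choose a sequence $(x_n)$ in $\cds$ with $x_n\to v$. By Lemma~\ref{l:convergence} the sequence $f(x_n)$ converges, say to $v'$, and I set $\bar f(v)=v'$. This is well defined: if $(x_n)$ and $(y_n)$ are two sequences in $\cds$ both converging to $v$, then by Lemma~\ref{l:convergence} both $f(x_n)$ and $f(y_n)$ converge, and by Lemma~\ref{l:injective} (applied to these two sequences) they have the same limit. Taking the constant sequence $x_n=v$ shows $\bar f(v)=f(v)$ for $v\in\cds$, so $\bar f$ extends $f$. The same two lemmas show that in fact $f(y_m)\to\bar f(v)$ for \emph{every} sequence $(y_m)$ in $\cds$ with $y_m\to v$, a fact I will use below.

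Next I would prove continuity. Suppose $v_m\to v$ in $V$; we must show $\bar f(v_m)\to\bar f(v)$. For each $m$, $\bar f(v_m)$ is by construction the limit of $f$ along some sequence in $\cds$ tending to $v_m$, so we may choose $y_m\in\cds$ with $\|y_m-v_m\|<1/m$ and $\|f(y_m)-\bar f(v_m)\|<1/m$. Then $\|y_m-v\|\le\|y_m-v_m\|+\|v_m-v\|\to 0$, so $(y_m)$ is a sequence in $\cds$ converging to $v$, whence $f(y_m)\to\bar f(v)$ by the observation at the end of the previous paragraph. Combining this with $\|f(y_m)-\bar f(v_m)\|<1/m$ gives $\bar f(v_m)\to\bar f(v)$. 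Finally, uniqueness is immediate, since two continuous functions $V\to V$ that agree on the dense set $\cds$ agree everywhere.

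I do not expect a genuine obstacle here: the substantive work is done by Lemmas~\ref{l:convergence} and~\ref{l:injective}. The only point that needs care is the diagonal choice of the points $y_m$ in the continuity argument, making $y_m$ close to $v_m$ and $f(y_m)$ close to $\bar f(v_m)$ simultaneously.
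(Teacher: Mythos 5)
Your proof is correct and follows essentially the same route as the paper: define $\bar f$ via limits along sequences in $\cds$, invoke Lemma~\ref{l:convergence} for existence and Lemma~\ref{l:injective} for well-definedness, and prove continuity by the same diagonal choice of approximating points $y_m$ with $\|y_m-v_m\|<1/m$ and $\|f(y_m)-\bar f(v_m)\|<1/m$. The only difference is that you spell out the uniqueness step explicitly, which the paper leaves implicit.
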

\begin{proof}
  For any $x\in V$ define $\bar f(x)$ as follows. Choose a sequence
  $(x_n)$ in $\cds$ converging to $x$, and let $\bar
  f(x)=\lim_{n\to\infty}f(x_n)$. This limit exists by
  Lemma~\ref{l:convergence} and the function is well defined by
  Lemma~\ref{l:injective}.
  
  Finally, it is easy to see that $\bar f$ is continuous. Indeed,
  suppose that $(x_n)$ is a sequence in $V$ converging to $x$, say.  By
  the definition of $\bar f$ we can pick a sequence $(x_n')$ in $\cds$
  such that $\|x_n-x_n'\|<1/n$ and $\|f(x_n')-\bar f(x_n)\|<1/n$ for
  all $n$. Then $x_n'\to x$ so, since $\bar f$ is well defined,
  $f(x_n')\to f(x)$ and, thus, $\bar f(x_n)\to f(x)$ as required.
\end{proof}
\begin{corollary}\label{c:bijective-V}
  Any step-isometry on $V$ is continuous.
\end{corollary}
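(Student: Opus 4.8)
The plan is simply to apply Corollary~\ref{c:bijective-S} with the ``dense set $S$'' taken to be all of $V$. A step-isometry on $V$ is by definition a bijective map $f\colon V\to V$ preserving the integer parts of distances, so it is in particular a step-isometry on the dense subset $V\subseteq V$; hence Corollary~\ref{c:bijective-S} applies and yields a (unique) continuous function $\bar f\colon V\to V$ extending $f$.

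The only remaining observation is that this extension $\bar f$ must coincide with $f$. Since $f$ is already defined on all of $V$ and $\bar f$ extends $f$, we have $\bar f(x)=f(x)$ for every $x\in V$. (Equivalently, in the construction of $\bar f$ in the proof of Corollary~\ref{c:bijective-S} one may take the approximating sequence $(x_n)$ in $S=V$ to be the constant sequence $x_n=x$, which gives $\bar f(x)=\lim_n f(x_n)=f(x)$.) Therefore $f=\bar f$ is continuous, as required.

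I do not expect any real obstacle: the whole content is already packaged in Corollary~\ref{c:bijective-S}, and this is just the special case $S=V$. The only point worth a quick check is that nothing in the chain of results from Lemma~\ref{l:seq-conv} through Corollary~\ref{c:bijective-S} uses that $S$ is a \emph{proper} subset of $V$ or that it is countable; those arguments rely only on $S$ being dense and on $f$ (being a bijective step-isometry) having an inverse that is again a step-isometry, both of which hold when $S=V$.
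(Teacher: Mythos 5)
Your proposal is correct and is exactly the paper's own argument: the paper proves this corollary by applying Corollary~\ref{c:bijective-S} with the dense set $S$ taken to be all of $V$, and your additional checks (that the extension coincides with $f$, and that nothing in Section~\ref{s:extend} requires $S$ to be proper or countable) are sound.
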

\begin{proof}
  This follows immediately from Corollary~\ref{c:bijective-S} by
  taking the dense set $S$ to be the whole of $V$.
\end{proof}

Lemma~\ref{l:level-3-step-isom} shows that $\bar f$ is a `scaled'
step-isometry; i.e., a step isometry in the norm $\frac13
\|\cdot\|$. Whilst that would be sufficient for our needs, $\bar f$ is
actually a step isometry in the original norm and we prove that
next. We start with the following trivial fact
\begin{lemma}\label{l:converge-either-side}
  Suppose that $S$ is a dense set in $V$ and that $x,y\in V$. Then
  there exist sequences $(x_n),(y_n)$ of points in
  $S$ converging to $x$ and $y$ respectively such that
  $\|x_n-y_n\|>\|x-y\|$ for all $n$. Similarly, providing $x\not= y$,
  we may choose such sequences $(x_n),(y_n)$ such that
  $\|x_n-y_n\|<\|x-y\|$ for all $n$.
\end{lemma}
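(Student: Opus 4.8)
The plan is to perturb $x$ and $y$ slightly along the line joining them --- or, when $x=y$, along an arbitrary fixed direction --- so as to increase or decrease their distance by a controlled amount, and then to replace the perturbed points by nearby points of $S$ using density, with a perturbation error small enough that the strict inequality survives.

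For the first assertion I would first treat the case $x\neq y$. Set $u=(y-x)/\|x-y\|$ and, for each $n$, consider the auxiliary points $x_n^*=x-u/n$ and $y_n^*=y+u/n$; since $y-x=\|x-y\|\,u$ these satisfy $\|x_n^*-y_n^*\|=\|x-y\|+2/n$. By density of $S$, pick $x_n,y_n\in S$ with $\|x_n-x_n^*\|<1/(2n)$ and $\|y_n-y_n^*\|<1/(2n)$, so that $x_n\to x$, $y_n\to y$ and, by the triangle inequality,
\[
\|x_n-y_n\|\ge \|x_n^*-y_n^*\|-\|x_n-x_n^*\|-\|y_n-y_n^*\|> \|x-y\|+2/n-1/n>\|x-y\|.
\]
When $x=y$ the same argument works verbatim with $u$ any unit vector, since then $\|x_n^*-y_n^*\|=2/n>0=\|x-y\|$.

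For the second assertion we are given $x\neq y$, so again set $u=(y-x)/\|x-y\|$ and now put $t_n=\min\{1/n,\ \|x-y\|/3\}$, $x_n^*=x+t_nu$, $y_n^*=y-t_nu$; these satisfy $\|x_n^*-y_n^*\|=\|x-y\|-2t_n\in(0,\|x-y\|)$ and $x_n^*\to x$, $y_n^*\to y$. Choosing $x_n,y_n\in S$ with $\|x_n-x_n^*\|<t_n/4$ and $\|y_n-y_n^*\|<t_n/4$ then gives $x_n\to x$, $y_n\to y$ and
\[
\|x_n-y_n\|\le \|x_n^*-y_n^*\|+\|x_n-x_n^*\|+\|y_n-y_n^*\|<\|x-y\|-2t_n+t_n/2<\|x-y\|.
\]

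I do not expect any real obstacle here: the only points requiring a little care are to choose the density-induced perturbation error strictly smaller than the ``slack'' ($2/n$, respectively $2t_n$) by which the auxiliary points already beat the target distance --- which is why the perturbations are taken of size $1/(2n)$ and $t_n/4$ rather than merely some null sequence --- and, in the decreasing case, to cap $t_n$ (here at $\|x-y\|/3$) so that $x_n^*$ and $y_n^*$ do not pass each other on the segment, keeping $\|x_n^*-y_n^*\|$ positive.
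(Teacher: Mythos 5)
Your proof is correct and follows essentially the same approach as the paper: perturb along the line through $x$ and $y$ to gain some slack in the distance, then use density of $S$ with a perturbation error strictly smaller than that slack. One small point in your favour: you handle the degenerate case $x=y$ of the first assertion explicitly, whereas the paper's proof scales its perturbation and ball radii by $r=\|x-y\|$ and so implicitly assumes $x\neq y$ there.
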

\begin{proof}
  Let $\eps>0$. Let $r=\|x-y\|$. The point $y'=x+(1+\eps)(y-x)$ has
  $\|y-y'\|=\eps r$ and $\|x-y'\|=(1+\eps)r$. Let $x''$ be any point
  of $S$ in the set $B(x,\eps r/2)$ and $y''$ any point of $S$ in
  $B(y',\eps r/2)$.  By the triangle inequality, we have
  $\|x-x''\|<\eps r/2$, $\|y-y''\|<3\eps r/2$ and, also, $\|x''-y''\|>r$.

  We get the required sequence by setting $x_n,y_n$ to be the points
  $x'',y''$ given by the above argument when $\eps=1/n$.

  The second inequality is very similar but this time we choose
  $y'=x+(1-\eps)(y-x)$.
\end{proof}
\begin{proposition}\label{p:extends}
  The function $\bar f$ defined above is a step-isometry. Moreover,
  $\bar f$ preserves integer distances.
\end{proposition}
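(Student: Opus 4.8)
The plan is to deduce everything from two one-sided estimates that follow quickly from Lemma~\ref{l:converge-either-side}, and then to upgrade one of them to a strict inequality using that $\bar f$ is a homeomorphism of $V$. First I would record that $\bar f$ is a bijection with continuous inverse: applying Corollary~\ref{c:bijective-S} to the step-isometry $f^{-1}$ on $\cds$ gives a continuous extension $\overline{f^{-1}}$, and since $\bar f\circ\overline{f^{-1}}$ and $\overline{f^{-1}}\circ\bar f$ are continuous and restrict to the identity on the dense set $\cds$, they are the identity on $V$; hence $\bar f^{-1}=\overline{f^{-1}}$ is continuous.

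Next, for all $a,b\in V$ and $k\in\N$ I would establish:
(A) if $\|a-b\|<k$ then $\|\bar f(a)-\bar f(b)\|\le k$; and
(B) if $\|a-b\|>k$ then $\|\bar f(a)-\bar f(b)\|\ge k$.
For (A) with $a\ne b$, use the second part of Lemma~\ref{l:converge-either-side} to get sequences $a_n\to a$, $b_n\to b$ in $\cds$ with $\|a_n-b_n\|<\|a-b\|<k$; since $f$ is a step-isometry on $\cds$, $\lfloor\|f(a_n)-f(b_n)\|\rfloor=\lfloor\|a_n-b_n\|\rfloor\le k-1$, so $\|f(a_n)-f(b_n)\|<k$, and letting $n\to\infty$ (with $f(a_n)\to\bar f(a)$, $f(b_n)\to\bar f(b)$ by the construction of $\bar f$) yields $\|\bar f(a)-\bar f(b)\|\le k$; the case $a=b$ is trivial. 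Statement (B) is proved the same way from the first part of Lemma~\ref{l:converge-either-side}. Since $\bar f^{-1}=\overline{f^{-1}}$ is the continuous extension of the step-isometry $f^{-1}$, the analogues of (A) and (B) hold for $\bar f^{-1}$ as well.

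The key step is to upgrade (A) to the strict inequality $\|x-y\|<k\implies\|\bar f(x)-\bar f(y)\|<k$. Suppose instead $\|x-y\|<k$ but $\|\bar f(x)-\bar f(y)\|=k$ (it is $\le k$ by (A)). Write $p=\bar f(x)$, $q=\bar f(y)$, and for $t>1$ set $w_t=\bar f^{-1}(p+t(q-p))$. Since $\|(p+t(q-p))-p\|=tk>k$, applying (B) for $\bar f^{-1}$ to the pair $\bigl(p+t(q-p),\,p\bigr)$ gives $\|w_t-x\|\ge k$. But $p+t(q-p)\to q$ as $t\to1^+$, so continuity of $\bar f^{-1}$ gives $w_t\to y$, whence $\|x-y\|=\lim_{t\to1^+}\|w_t-x\|\ge k$, contradicting $\|x-y\|<k$. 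Applying the same argument to $\bar f^{-1}$ gives the equivalence $\|x-y\|<k\iff\|\bar f(x)-\bar f(y)\|<k$ for all $x,y\in V$ and $k\in\N$.

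From this equivalence it is immediate that $\bar f$ is a step-isometry, since for non-negative reals the value of the floor is determined by which inequalities ``$<k$'' ($k\in\N$) hold. For the ``moreover'' part, suppose $\|x-y\|=k\in\Z$ (the case $k=0$ is trivial). The equivalence gives $\|\bar f(x)-\bar f(y)\|\ge k$, while for $0<t<1$ the point $y_t=x+t(y-x)$ satisfies $\|x-y_t\|=tk<k$, hence $\|\bar f(x)-\bar f(y_t)\|<k$; letting $t\to1^-$ and using continuity of $\bar f$ gives $\|\bar f(x)-\bar f(y)\|\le k$, so $\|\bar f(x)-\bar f(y)\|=k$. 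The main obstacle is precisely the upgrade of (A) to a strict inequality: this cannot come from passing to limits in strict inequalities, and genuinely uses both the surjectivity of $\bar f$ (to form $w_t$) and the continuity of $\bar f^{-1}$ (to conclude $w_t\to y$).
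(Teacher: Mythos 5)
Your proof is correct. It shares the paper's basic machinery --- the one-sided estimates obtained from the two-sided approximating sequences of Lemma~\ref{l:converge-either-side}, together with the identification $\bar f^{-1}=\overline{f^{-1}}$ --- but it resolves the delicate boundary case differently. The paper proves the \emph{non-strict} one-sided estimates ($\|x-y\|\ge k\Rightarrow\|\bar f(x)-\bar f(y)\|\ge k$ and $\|x-y\|\le k\Rightarrow\|\bar f(x)-\bar f(y)\|\le k$); combining these immediately yields that $\bar f$ preserves integer distances, the same applies to $\bar f^{-1}$, and the step-isometry property then follows by noting that $\|x-y\|\in(k,k+1)$ forces $\|\bar f(x)-\bar f(y)\|\in[k,k+1]$ with the endpoints excluded because they would be pulled back to integer distances. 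You instead work with strict-hypothesis estimates, which cannot be combined at an exact integer distance, and so you rule out the equality case $\|\bar f(x)-\bar f(y)\|=k$ by prolonging the segment beyond $q=\bar f(y)$ and invoking continuity of $\bar f^{-1}$; you then need a separate limiting argument along the segment $[x,y]$ for the ``moreover'' clause. Both routes are sound; the paper's is slightly more economical in that integer-distance preservation comes for free from the non-strict estimates, while yours is a self-contained geometric alternative that trades that for an explicit use of the linear structure of $V$ and the homeomorphism property of $\bar f$. One small remark: your step (A) does not actually need Lemma~\ref{l:converge-either-side} at all, since when $\|a-b\|<k$ any approximating sequences eventually satisfy $\|a_n-b_n\|<k$; the lemma is only genuinely needed where the distance equals the threshold.
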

\begin{proof}
  Suppose $x$ and $y$ have $\|x-y\|\ge k$ for some $k\in \Z$. Then, by
  Lemma~\ref{l:converge-either-side} we can find sequences $(x_n)$ and
  $(y_n)$ in $S$ that converge to $x$ and $y$ respectively and have
  $\|x_n-y_n\|> k$. Hence, since $f$ is a step-isometry, $\|\bar
  f(x)-\bar f(y)\|=\lim_n\|f(x_n)-f(y_n)\|\ge k$.

  Similarly, if $x$ and $y$ have $\|x-y\|\le k$, then, by taking
  sequences with $\|x_n-y_n\|< k$, we see that $\|\bar f(x)-\bar f(y)\|\le
  k$. 

  This shows that if $\|x-y\|\in (k,k+1)$ then $\|\bar f(x)-\bar
  f(y)\|\in [k,k+1]$. Also, if $\|x-y\|=k$ then $\|\bar f(x)-\bar
  f(y)\|=k$; i.e., $\bar f$ preserves integer distances.

  Observe that $f^{-1}$ is also a step-isometry on $S$, so it extends to
  $\overline {f^{-1}}$ a step-isometry on $V$. Since we have
  $\overline{f^{-1}}\circ \bar f=\bar f
  \circ\overline{f^{-1}}=\textrm{id}$ on $S$, and $\bar f$ and
  $\overline{f^{-1}}$ are both continuous, we see that $\bar
  f^{-1}=\overline {f^{-1}}$. Thus, if $\|\bar f(x)-\bar f(y)\|=k$
  then $\|x-y\|=k$ and the result follows.
\end{proof}
\begin{corollary}\label{c:closed-balls}
  Suppose $f$ is a step-isometry on $V$. Then $f$ preserves integer
  distances. Moreover, for any integer $k$ and $x\in V$ we have
  $f(B(x,k))=B(f(x),k)$.
\end{corollary}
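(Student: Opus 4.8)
The plan is to read both assertions off Proposition~\ref{p:extends}. For the first, I would note that a step-isometry $f$ on $V$ is continuous by Corollary~\ref{c:bijective-V}, so taking the dense set in Corollary~\ref{c:bijective-S} to be $S=V$ and invoking the uniqueness of the continuous extension, we have $\bar f=f$; Proposition~\ref{p:extends} then gives that $\bar f$, hence $f$, preserves integer distances. (One could also argue directly: if $\|x-y\|=k\in\Z$ with $k\ge1$, put $y_t=x+t(y-x)$ so that $\|x-y_t\|=tk$; for $t\in((k-1)/k,1)$ the step-isometry condition forces $\|f(x)-f(y_t)\|<k$, and letting $t\to1^-$ and using continuity gives $\|f(x)-f(y)\|\le k$, while the same with $t$ slightly larger than $1$ gives $\|f(x)-f(y)\|\ge k$.)

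For the ball equality, fix $x\in V$ and an integer $k$; the cases $k\le0$ are trivial, so assume $k\ge1$. I would first prove $f(B(x,k))\subseteq B(f(x),k)$. Let $y\in B(x,k)$. If $\|x-y\|<k$ then $\lfloor\|x-y\|\rfloor\le k-1$, so the defining property of a step-isometry gives $\lfloor\|f(x)-f(y)\|\rfloor\le k-1$ and hence $\|f(x)-f(y)\|<k$; if $\|x-y\|=k$ then $\|f(x)-f(y)\|=k$ by the integer-distance preservation just established. Either way $f(y)\in B(f(x),k)$.

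For the reverse inclusion I would use that $f^{-1}$ is again a bijective step-isometry --- immediate from the definition by substituting $u=f(x)$, $v=f(y)$ into the condition for $f$ to see that $\lfloor\|f^{-1}(u)-f^{-1}(v)\|\rfloor=\lfloor\|u-v\|\rfloor$. Applying the inclusion already proved to $f^{-1}$ gives $f^{-1}(B(f(x),k))\subseteq B(x,k)$, and pushing forward by $f$ yields $B(f(x),k)\subseteq f(B(x,k))$; combining the two inclusions finishes the proof. I do not anticipate a real obstacle: the only subtle point, and the actual content of the corollary, is that the bare step-isometry condition pins $\|f(x)-f(y)\|$ down only to the interval $[k,k+1)$ when $\|x-y\|=k\in\Z$, and it is continuity (Corollary~\ref{c:bijective-V}, via Proposition~\ref{p:extends}) that upgrades this to exact equality; the rest is a short case analysis together with the symmetric argument for $f^{-1}$.
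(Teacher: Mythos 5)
Your proposal is correct and follows essentially the same route as the paper: the integer-distance preservation is read off Proposition~\ref{p:extends} with $S=V$ (so that $\bar f=f$), and the ball equality comes from the open-ball containment given by the step-isometry condition together with the boundary case handled by integer-distance preservation for both $f$ and $f^{-1}$. Your write-up merely makes explicit the case analysis and the observation that $f^{-1}$ is again a step-isometry, both of which the paper leaves implicit.
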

\begin{proof}
  For the first part, take $S=V$ in Proposition~\ref{p:extends}. By
  the definition of a step isometry, $f$ maps the open ball
  $B^\circ(x,k)$ to $B^\circ(f(x),k)$ so, since it and its inverse
  preserve integer distances, the second part follows.
\end{proof}

\section{Extreme points}\label{s:extreme}
For this section we assume $f$ is a (necessarily continuous)
step-isometry on all of $V$ that fixes $0$. The assumption that $0$ is
fixed makes the results simpler to state and this case is sufficient
for our needs.

Our aim in this section is to prove that $f$ maps the extreme points
of the unit ball to themselves, and that restricted to these extreme
points it is an isometry.

First we characterise the extreme points of $B$ in a purely norm/metric
way.
\begin{lemma}\label{l:nx}
  Suppose that $x$ is an extreme point of $B=B(0,1)$ and $n\in
  \N$. Then $B(0,1)\cap B(nx,n-1)=\{x\}$.
\end{lemma}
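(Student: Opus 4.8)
The plan is to show both inclusions of $B(0,1)\cap B(nx,n-1)=\{x\}$. One direction is trivial: since $\|x\|=1$ we have $x\in B(0,1)$, and since $\|x-nx\|=(n-1)\|x\|=n-1$ we have $x\in B(nx,n-1)$, so $\{x\}\subseteq B(0,1)\cap B(nx,n-1)$. The substance is the reverse inclusion: any $y$ with $\|y\|\le 1$ and $\|y-nx\|\le n-1$ must equal $x$.

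For the reverse inclusion, suppose $\|y\|\le 1$ and $\|y-nx\|\le n-1$. The idea is to exhibit $x$ as a convex combination of $y$ and another point of $B$ and then invoke extremality. First I would note that
\[
nx = y + (nx-y),
\]
so $x = \tfrac1n\,y + \tfrac{n-1}{n}\cdot\tfrac{nx-y}{n-1}$. Set $z=\tfrac{nx-y}{n-1}$; then $\|z\|=\tfrac1{n-1}\|nx-y\|\le 1$, so $z\in B$. Thus $x$ is written as the convex combination $\tfrac1n y+\tfrac{n-1}{n}z$ of two points of $B$ (with strictly positive coefficients, since $n\ge 1$; the case $n=1$ is degenerate and trivial, as $B(x,0)=\{x\}$, so assume $n\ge2$). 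Since $x$ is an extreme point of $B$, this forces $y=z=x$, and in particular $y=x$, as required.

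I do not expect any real obstacle here — the lemma is essentially a direct unwinding of the definition of an extreme point, once one spots the correct convex combination. The only minor point to handle carefully is making sure all convex-combination coefficients are strictly positive (so that the extreme point definition applies), which is why one separates out the trivial $n=1$ case; for $n\ge2$ both weights $\tfrac1n$ and $\tfrac{n-1}{n}$ lie strictly between $0$ and $1$. It is worth noting that this lemma is the metric characterisation of extreme points that will let the step-isometry $f$ (which by Corollary~\ref{c:closed-balls} maps $B(x,k)$ to $B(f(x),k)$ for integer $k$, and fixes $0$) recognise and permute the extreme points of $B$ in the subsequent argument.
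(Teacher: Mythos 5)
Your proof is correct and is essentially identical to the paper's: your point $z=\tfrac{nx-y}{n-1}$ is exactly the paper's $\tfrac{n}{n-1}x-\tfrac{1}{n-1}y$, and the convex combination $x=\tfrac1n y+\tfrac{n-1}{n}z$ is the same one used there. Your explicit handling of the degenerate case $n=1$ and of the easy inclusion $\{x\}\subseteq B(0,1)\cap B(nx,n-1)$ are minor tidy additions that the paper leaves implicit.
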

\begin{proof}
  Suppose that  $y\in B$ and $\|nx-y\|\le n-1$. Then
  $\|y\|\le 1$ and $\|\frac{n}{n-1}x-\frac{1}{n-1}y\|\le 1$. Since
  \[
  x=\frac{n-1}{n}\left(\frac{n}{n-1}x-\frac{1}{n-1}y\right)
    +\frac{1}{n}y
    \] and $x$ is an extreme point of $B$ we see that $y=x$.
\end{proof}
\begin{lemma}\label{l:characterize-extreme}
  A point $x$ in the unit ball $B=B(0,1)$ is an extreme point if and
  only if there exists a point $z$ such that $B(z,1)\cap
  B(0,1)=\{x\}$.
\end{lemma}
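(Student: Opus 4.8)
The plan is to prove both directions separately; the forward direction is essentially a special case of Lemma~\ref{l:nx}, and the reverse direction reduces, after one observation, to the definition of an extreme point.

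For the forward direction, suppose $x$ is an extreme point of $B=B(0,1)$. Applying Lemma~\ref{l:nx} with $n=2$ gives $B(0,1)\cap B(2x,1)=\{x\}$, so the point $z=2x$ has the required property. (One could also run the two-line convexity argument directly: any $y$ in $B(0,1)\cap B(2x,1)$ satisfies $\|y\|\le1$ and $\|2x-y\|\le1$, and $x=\tfrac12 y+\tfrac12(2x-y)$ forces $y=x$.)

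For the reverse direction, suppose $z$ is a point with $B(z,1)\cap B(0,1)=\{x\}$. The key first step is to note that $z$ must equal $2x$. Consider the reflection $x':=z-x$ of $x$ through $z/2$. Since $x\in B(0,1)$ we have $\|x'-z\|=\|{-x}\|=\|x\|\le1$, so $x'\in B(z,1)$; and since $x\in B(z,1)$ we have $\|x'-0\|=\|z-x\|\le1$, so $x'\in B(0,1)$. Hence $x'\in B(0,1)\cap B(z,1)=\{x\}$, giving $z-x=x$, i.e.\ $z=2x$. Thus we may assume $B(2x,1)\cap B(0,1)=\{x\}$.

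It remains to deduce that $x$ is an extreme point of $B$. Suppose not; then $x$ is a nontrivial convex combination of two distinct points of $B$, and hence (shrinking the relevant segment symmetrically about $x$, a standard reduction) $x$ is the midpoint of a nondegenerate segment $[y,y']\subseteq B$ with $y\ne y'$. Then $y'=2x-y$, so $\|2x-y\|=\|y'\|\le1$, which together with $\|y\|\le1$ gives $y\in B(0,1)\cap B(2x,1)=\{x\}$; hence $y=x$ and so $y'=2x-y=x$, contradicting $y\ne y'$. Therefore $x$ is extreme. The only mildly delicate point in this argument is the reduction to the midpoint case in the definition of an extreme point; everything else is a one-line norm estimate, so I do not anticipate any real obstacle.
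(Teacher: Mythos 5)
Your proof is correct and follows essentially the same route as the paper: the forward direction via Lemma~\ref{l:nx} with $n=2$, and the reverse direction by first using the reflection $z-x$ to force $z=2x$ and then running the midpoint argument against $B(0,1)\cap B(2x,1)=\{x\}$. The only difference is cosmetic: you make explicit the reduction from general convex combinations to midpoints, which the paper's proof uses silently.
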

\begin{proof}
  If $x$ is an extreme point then the point $z=2x$ is such a point by
  Lemma~\ref{l:nx}.

  Now suppose that $z$ is a point such that $B(z,1)\cap B(0,1)=\{x\}$.
  Let $y=z-x$. Then $\|y\|\le 1$. Hence, the point $y$ is in $B(0,1)$
  and $B(z,1)$. Thus, since $x$ is the unique point in the
  intersection, $y=x$, so $z=2x$.

  Now suppose that $x=\frac12(y+w)$ for some $y,w\in B$. Then
  $2x-y=w\in B$, so $y\in B(0,1)\cap B(2x,1)$. Using the fact that $x$
  is the unique point in this intersection again, we have $y=w=x$, and
  we see that $x$ is an extreme point of $B$.
\end{proof}
We use this characterisation of the extreme points to show that $f$
maps them among themselves.
\begin{corollary}\label{c:extreme-to-extreme}
  The extreme points of the unit ball map to themselves under~$f$. 
\end{corollary}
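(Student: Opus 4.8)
The plan is to use the purely metric characterisation of extreme points in Lemma~\ref{l:characterize-extreme} together with the fact (Corollary~\ref{c:closed-balls}) that $f$ carries closed balls of integer radius to closed balls of the same radius about the image of the centre. Since $f$ is a bijection fixing $0$, it distributes over intersections, and so transports the characterising property of $x$ across to $f(x)$.

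Concretely, I would argue as follows. Let $x$ be an extreme point of $B=B(0,1)$. First note that $f(x)\in B$: since $x\in B(0,1)$ and $f(0)=0$, Corollary~\ref{c:closed-balls} gives $f(x)\in f(B(0,1))=B(f(0),1)=B$. By Lemma~\ref{l:characterize-extreme} there is a point $z$ with $B(z,1)\cap B(0,1)=\{x\}$. Applying the bijection $f$ and using Corollary~\ref{c:closed-balls} for the ball about $z$ and for the ball about $0$ (recalling $f(0)=0$), we obtain
\[
\{f(x)\}=f\bigl(B(z,1)\cap B(0,1)\bigr)=f(B(z,1))\cap f(B(0,1))=B(f(z),1)\cap B(0,1).
\]
Thus $f(x)$ is a point of $B$ for which the point $w=f(z)$ satisfies $B(w,1)\cap B(0,1)=\{f(x)\}$, so by the converse direction of Lemma~\ref{l:characterize-extreme}, $f(x)$ is an extreme point of $B$. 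Hence $f(\ext(B))\subseteq\ext(B)$.

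To get equality (so that $f$ really maps the extreme points \emph{onto} themselves), I would observe that $f^{-1}$ is again a step-isometry of $V$ fixing $0$, so the same argument applied to $f^{-1}$ gives $f^{-1}(\ext(B))\subseteq\ext(B)$, i.e.\ $\ext(B)\subseteq f(\ext(B))$; combining the two inclusions yields $f(\ext(B))=\ext(B)$. I do not expect any genuine obstacle: the only things to be mildly careful about are that step-isometries are bijective by definition (so that $f(A\cap B')=f(A)\cap f(B')$), and that Corollary~\ref{c:closed-balls} applies to balls centred at an arbitrary point such as $z$, not merely at the origin — both of which hold.
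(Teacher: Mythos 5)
Your proof is correct and follows essentially the same route as the paper: characterise extreme points via the ball-intersection property of Lemma~\ref{l:characterize-extreme} and transport it with Corollary~\ref{c:closed-balls} (the paper just takes the explicit witness $z=2x$ from Lemma~\ref{l:nx} rather than an abstract $z$). Your additional observation that applying the argument to $f^{-1}$ yields $f(\ext(B))=\ext(B)$ is a worthwhile extra the paper leaves implicit.
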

\begin{proof}
  Lemma~\ref{l:characterize-extreme} characterises the extreme points
  by their integer distance properties. These are preserved by the
  step-isometry so the extreme points must be. Indeed, suppose $x$ is
  an extreme point of $B$. Then by Lemma~\ref{l:nx} the point $2x$ has
  the property that $B(0,1)\cap B(2x,1)=\{x\}$. Hence, by
  Corollary~\ref{c:closed-balls}, $B(0,1)\cap B(f(2x),1)$ must be the single point
  $f(x)$. Thus, by Lemma~\ref{l:characterize-extreme}, $f(x)$ is an
  extreme point of $B$.
\end{proof}
The final aim in this section is to show that $f$ restricted to the
extreme points of $B$ is an isometry.
\begin{lemma}\label{l:multiple-extreme}
  Suppose that $n\in \N$ and that $x$ is an extreme point of $B$. Then
  $f(nx)=nf(x)$.
\end{lemma}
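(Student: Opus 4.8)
The plan is to prove $f(nx) = nf(x)$ for every extreme point $x$ of $B$ and every $n \in \N$ by induction on $n$, the case $n = 1$ being trivial (and $n = 0$ following from $f(0) = 0$). So suppose the claim holds for all integers up to $n$, and we wish to show $f((n+1)x) = (n+1)f(x)$. The idea is to pin down the point $f((n+1)x)$ by metric constraints that $(n+1)x$ satisfies with respect to points whose images we already control, namely $0$, $f(x) = \hat f(x)$ (where for brevity write $y = f(x)$, an extreme point of $B$ by Corollary~\ref{c:extreme-to-extreme}), and $f(nx) = ny$ by the inductive hypothesis.

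First I would record the integer-distance data for $(n+1)x$. Since $x$ is an extreme point of unit norm, $\|(n+1)x\| = n+1$ and $\|(n+1)x - nx\| = \|x\| = 1$. Because $f$ is a step-isometry fixing $0$ and preserving integer distances (Corollary~\ref{c:closed-balls}), the image $z := f((n+1)x)$ satisfies $\|z\| = n+1$ and $\|z - ny\| = 1$. The second step is to invoke a suitable sharpened version of Lemma~\ref{l:nx}: the point $y$ being an extreme point of $B$, one expects that $B(0, n+1) \cap B(ny, 1) = \{(n+1)y\}$. Indeed, if $\|w\| \le n+1$ and $\|w - ny\| \le 1$, then $w = (n+1)\cdot\bigl(\tfrac{n}{n+1}\cdot\tfrac{ny - (ny - w)/1 \cdots}{}\bigr)$ — more cleanly, write $w = ny + (w - ny)$ with $\|w - ny\| \le 1$, and observe $y = \tfrac{n}{n+1}\cdot\tfrac{w}{n} + \tfrac{1}{n+1}(ny - w + y)$; choosing the convex-combination coefficients correctly forces, from extremality of $y$, that $w - ny = y$, i.e.\ $w = (n+1)y$. (This is exactly the computation in Lemma~\ref{l:nx} with the roles reorganised, so I would state it as a one-line lemma and reuse the argument there.) Combining this with the previous step gives $z = f((n+1)x) = (n+1)y = (n+1)f(x)$, completing the induction.

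The main obstacle is getting the metric characterisation of $(n+1)y$ exactly right — specifically, verifying that the two constraints $\|z\| = n+1$ and $\|z - ny\| = 1$ genuinely have the unique solution $z = (n+1)y$, which is where extremality of $y$ must be used and which requires a clean convex-combination identity analogous to the one in Lemma~\ref{l:nx}. One subtlety to be careful about: $f$ preserves \emph{integer} distances exactly (Corollary~\ref{c:closed-balls}), so from $\|(n+1)x\| = n+1 \in \Z$ we do get $\|z\| = n+1$ exactly, and likewise $\|z - ny\| = 1$ exactly — so both constraints are equalities, not just inequalities, which is what makes the uniqueness argument go through. A secondary point is that the inductive hypothesis is used only to identify $f(nx)$ with $ny$; one should make sure $n \ge 1$ there so that $ny \ne 0$ and the geometry is non-degenerate, but the base case $n=1$ handles the start.
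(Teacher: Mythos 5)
There is a genuine gap: the ``sharpened version of Lemma~\ref{l:nx}'' on which your induction rests is false. You need the set of points $w$ with $\|w\|=n+1$ and $\|w-ny\|=1$ (or the ball version $B(0,n+1)\cap B(ny,1)$) to be the single point $(n+1)y$, but this fails already in $\ell_\infty^2$: take $y=(1,1)$ and $w=(n+1,n-1)$, which satisfies both constraints yet is not $(n+1)y$. It fails even in $\ell_1^2$ with $y=(1,0)$ and $w=(n,1)$, and adding the further constraint $\|w-y\|=n$ coming from the point $x$ does not remove these spurious solutions. The convex-combination identity you write, $y=\tfrac{n}{n+1}\cdot\tfrac{w}{n}+\tfrac{1}{n+1}(ny-w+y)$, is algebraically correct, but the two points $\tfrac{w}{n}$ and $ny-w+y$ have norms that can be as large as $\tfrac{n+1}{n}$ and $2$ respectively, so they need not lie in $B$ and extremality of $y$ cannot be invoked. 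The configuration in Lemma~\ref{l:nx} is the other way round --- the \emph{small} ball $B(0,1)$ intersected with the \emph{large} ball $B(nx,n-1)$ --- and that asymmetry is exactly what makes the uniqueness argument work; your version has it inside out.

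The paper's proof supplies the two ingredients you are missing. First, it observes that $f$ is also a step-isometry for the rescaled norm $\tfrac1n\|\cdot\|$, whose unit ball is $nB$; since $nx$ is an extreme point of $nB$, Corollary~\ref{c:extreme-to-extreme} (applied in the rescaled norm) forces $f(nx)=ny$ for \emph{some} extreme point $y$ of $B$. This is information you cannot get from integer-distance constraints alone. Second, to identify $y$ with $f(x)$ it uses the correct ball configuration: $B(0,1)\cap B(nx,n-1)=\{x\}$ is preserved by $f$ (both radii are integers, Corollary~\ref{c:closed-balls}), giving $B(0,1)\cap B(ny,n-1)=\{f(x)\}$, while Lemma~\ref{l:nx} applied to $y$ says this set is $\{y\}$. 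No induction is needed. If you want to salvage your write-up, replace the false uniqueness claim with these two steps.
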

\begin{proof}Obviously $f$ is also a step-isometry in the norm
  $\frac1n\|\cdot\|$ which has unit ball $nB$. Thus, since $nx$ is an
  extreme point of $nB$, it must map to a point $ny$ which is an
  extreme point of $nB$ and, thus, $y$ is an extreme point of $B$. We
  need to show that $f(x)=y$.

  By Lemma~\ref{l:nx}, $B(0,1)\cap B(nx,n-1)=\{x\}$. Hence, by
  Corollary~\ref{c:closed-balls}, $B(f(0),1)\cap
  B(f(nx),n-1)=\{f(x)\}$. Since $f(0)=0$ and $f(nx)=ny$,
  Lemma~\ref{l:nx} again shows that
  \[B(f(0),1)\cap
  B(f(nx),n-1)=B(0,1)\cap B(ny,n-1)=\{y\},
  \]
  and, thus, $f(x)=y$ as required.
\end{proof}
The next lemma provides a useful criterion for certain distances to be
preserved.
\begin{lemma}\label{l:multiple-isom}
  Suppose $x,y\in V$ have the property that $f(nx)=nf(x)$ and
  $f(ny)=nf(y)$ for any $n\in \N$. Then $\|x-y\|=\|f(x)-f(y)\|$.
\end{lemma}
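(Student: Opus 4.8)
The plan is to apply the step-isometry property of $f$ not to $x$ and $y$ directly but to their integer multiples $nx$ and $ny$, and then let $n\to\infty$. Since $f$ is a step-isometry on $V$, we have $\lfloor\|nx-ny\|\rfloor=\lfloor\|f(nx)-f(ny)\|\rfloor$ for every $n\in\N$. By hypothesis $f(nx)=nf(x)$ and $f(ny)=nf(y)$, so the right-hand side equals $\lfloor\|n(f(x)-f(y))\|\rfloor=\lfloor n\|f(x)-f(y)\|\rfloor$, while the left-hand side is $\lfloor n\|x-y\|\rfloor$. Hence $\lfloor n\|x-y\|\rfloor=\lfloor n\|f(x)-f(y)\|\rfloor$ for all $n\in\N$.

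It then remains to invoke the elementary fact that if two non-negative reals $a$ and $b$ satisfy $\lfloor na\rfloor=\lfloor nb\rfloor$ for every $n\in\N$, then $a=b$: indeed, for each $n$ both $na$ and $nb$ lie in the same half-open unit interval $[m,m+1)$, so $|a-b|<1/n$, and letting $n\to\infty$ gives $a=b$. Applying this with $a=\|x-y\|$ and $b=\|f(x)-f(y)\|$ yields $\|x-y\|=\|f(x)-f(y)\|$, as required.

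There is essentially no obstacle here. The only point to be careful about is that the hypothesis $f(nx)=nf(x)$, $f(ny)=nf(y)$ is exactly what is needed to turn the a priori uncontrolled quantity $\|f(nx)-f(ny)\|$ into $n\|f(x)-f(y)\|$, so that the equality of integer parts for all $n$ pins the value down; no appeal to the Mazur--Ulam theorem, to continuity, or to the structure of $B$ is needed.
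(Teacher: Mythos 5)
Your proof is correct and is essentially the same as the paper's: both apply the step-isometry property to $nx$ and $ny$, use the hypothesis to rewrite $\|f(nx)-f(ny)\|$ as $n\|f(x)-f(y)\|$, deduce that $\|x-y\|$ and $\|f(x)-f(y)\|$ differ by less than $1/n$, and let $n\to\infty$.
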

\begin{proof}
  By hypothesis, for any $n\in \N$,
  \[\|f(x)-f(y)\|=\frac1n \|f(nx)-f(ny)\|.\]
  Also, since $f$ is a step-isometry
  \[ 
  \left\lfloor \|nx-ny\|\right\rfloor=\left\lfloor
    \|f(nx)-f(ny)\|\right\rfloor,
  \]
in particular 
\[
  \bigl|\|f(nx)-f(ny)\|-\|nx-ny\|\bigr|< 1.
\]
Hence 
\[\|f(x)-f(y)\|=\lim_{n\to \infty}\frac1n \|f(nx)-f(ny)\|=\lim_{n\to\infty}\frac1n \|nx-ny\|=\|x-y\|.\qedhere
\]
\end{proof}

\begin{proposition}
  The function $f$ is an isometry on the extreme points of~$B$.
\end{proposition}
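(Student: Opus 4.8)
The plan is simply to combine the two immediately preceding lemmas. Let $x$ and $y$ be any two extreme points of the unit ball $B=B(0,1)$. By Lemma~\ref{l:multiple-extreme}, applied to each of them, we have $f(nx)=nf(x)$ and $f(ny)=nf(y)$ for every $n\in\N$. These are exactly the hypotheses of Lemma~\ref{l:multiple-isom}, so that lemma gives $\|x-y\|=\|f(x)-f(y)\|$. Since $x,y\in\ext(B)$ were arbitrary, $f$ restricted to $\ext(B)$ is an isometry, which is the claim.

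There is no real obstacle: the substantive content has already been extracted into Lemmas~\ref{l:multiple-extreme} and~\ref{l:multiple-isom}. The first says that $f$ commutes with multiplication by positive integers on extreme points (proved via the metric characterisation $B(0,1)\cap B(nx,n-1)=\{x\}$ from Lemma~\ref{l:nx} together with the scaling trick of viewing $f$ as a step-isometry in the norm $\frac1n\|\cdot\|$), and the second says that any pair of points on which $f$ commutes with integer scaling must have their mutual distance preserved (since $\tfrac1n\|f(nx)-f(ny)\|\to\|f(x)-f(y)\|$ while $\bigl|\,\|f(nx)-f(ny)\|-\|nx-ny\|\,\bigr|<1$ forces the same limit to equal $\|x-y\|$). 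The proof of the proposition is then just the observation that the output of the first lemma is precisely the input required by the second.
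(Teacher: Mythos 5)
Your proof is correct and is essentially identical to the paper's: both simply apply Lemma~\ref{l:multiple-extreme} to each extreme point and then feed the resulting hypotheses into Lemma~\ref{l:multiple-isom}. Nothing further is needed.
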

\begin{proof}
  Suppose $x$ and $y$ are extreme points of $B$. We know that they map
  to extreme points.  By Lemma~\ref{l:multiple-extreme} we know that
  $f(nx)=nf(x)$ and $f(ny)=nf(y)$ for all $n\in \N$. Hence, by
  Lemma~\ref{l:multiple-isom}, $\|x-y\|=\|f(x)-f(y)\|$. Since this is
  true for all $x,y\in \ext(B)$, $f$ is an isometry on $\ext(B)$.
\end{proof}

\section{The lattice generated by the extreme points}\label{s:lattice}
Throughout this section we assume that $f$ is a (continuous)
step-isometry of $V$ that fixes 0.  In the previous section we showed
that $f$ maps the extreme points of $B$ to themselves. Obviously the
same argument shows that $f$ maps the extreme points of $B(y,1)$ to
extreme points of $B(f(y),1)$.  We start this section by showing that
this mapping is the `same' mapping.

\begin{lemma}\label{l:same-y}
  Suppose $x$ is an extreme point of $B$. Then for any $y\in V$ we have
  $f(y+x)=f(y)+f(x)$.
\end{lemma}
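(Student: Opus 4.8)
The plan is to deduce the identity from the results of Section~\ref{s:extreme} by translating. Fixing $y$, I would set $g\colon V\to V$ by $g(v)=f(v+y)-f(y)$. Since $f$ is a bijective step-isometry of $V$, so is $g$, and $g(0)=0$; hence every result of Section~\ref{s:extreme} applies to $g$ exactly as it does to $f$. In particular, as $x$ is an extreme point of $B$, Lemma~\ref{l:multiple-extreme} applied to $g$ (and to $f$) gives $g(nx)=n\,g(x)$ and $f(nx)=n\,f(x)$ for all $n\in\N$. Unwinding the definition of $g$, the first identity reads $f(y+nx)=f(y)+n\,g(x)$ for all $n$, so $f$ runs through an arithmetic progression with common difference $g(x)$ along the line $\{y+nx\}$, just as it does with common difference $f(x)$ along $\{nx\}$.

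It then suffices to prove $g(x)=f(x)$, since $g(x)=f(y+x)-f(y)$ by definition, and the lemma follows. For this I would compare the two progressions at the points $y+nx$ and $nx$, which differ by $y$: the step-isometry property of $f$ gives $\lfloor\|f(y+nx)-f(nx)\|\rfloor=\lfloor\|y\|\rfloor$, so $\|f(y+nx)-f(nx)\|<\lfloor\|y\|\rfloor+1$ for every $n\in\N$. Substituting the two formulas above, $\|f(y)+n(g(x)-f(x))\|<\lfloor\|y\|\rfloor+1$ for all $n\in\N$. If $g(x)\neq f(x)$, the left-hand side is at least $n\|g(x)-f(x)\|-\|f(y)\|\to\infty$, a contradiction. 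Hence $g(x)=f(x)$, which gives $f(y+x)=f(y)+f(x)$.

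I do not expect a serious obstacle here: the entire content is the observation that a translate of a $0$-fixing step-isometry is again a $0$-fixing step-isometry, so the work of Section~\ref{s:extreme} already tells us $f$ is ``affine'' along \emph{every} line in an extreme direction, not just those through the origin; the common differences of the two parallel progressions are then pinned down using only the trivial fact that a step-isometry changes each distance by less than $1$. (As a sanity check, for $\lid$ with $f$ as in Theorem~\ref{t:l_infinity} and $x=(1,\dots,1)$, one has $f(x)=\sum_i\eps_i e_{\sigma(i)}$ and indeed $f(y+x)=f(y)+\sum_i\eps_i e_{\sigma(i)}$, since incrementing every coordinate by $1$ increments each integer part and fixes each fractional part.)
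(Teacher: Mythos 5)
Your proof is correct and follows essentially the same route as the paper's: the paper likewise obtains $f(y+nx)=f(y)+nz$ (via the extreme points of $B(y,1)$, i.e.\ implicitly via your translated map $g$) and then pins down $z=f(x)$ by observing that the pairs $nx$ and $y+nx$ stay at bounded distance, so their images must too. Your version merely makes the translation explicit and quantifies the boundedness via $\lfloor\|y\|\rfloor+1$; there is no substantive difference.
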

\begin{proof}
  The point $y+x$ is an extreme point of $B(y,1)$ so, by
  Corollary~\ref{c:extreme-to-extreme}, $f(y+x)=f(y)+z$ for some
  extreme point $z\in B$ and, by Lemma~\ref{l:multiple-extreme},
  $f(y+nx)=f(y)+nz$ for all $n\in \N$. Now the pairs of points $nx$
  and $y+nx$ are each $\|y\|$ apart: in particular these distances are
  bounded. Thus, since $f$ is step-isometry, the same is true of the
  pairs $f(nx)=nf(x)$ and $f(y+nx)=f(y)+nz$. Hence $z=f(x)$ as
  claimed.
\end{proof}
\begin{corollary}\label{c:neg-of-ext}
  For any extreme point $x$ of $B$ we have $f(-x)=-f(x)$.
\end{corollary}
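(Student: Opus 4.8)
The plan is to derive this directly from Lemma~\ref{l:same-y}, which is the only ingredient needed. The point is that $(-x)+x=0$, and since $f$ fixes $0$ throughout this section, the additivity of $f$ with respect to an extreme point (at a suitably chosen base point) forces $f(-x)$ to be $-f(x)$.

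Concretely, let $x$ be an extreme point of $B$. I would apply Lemma~\ref{l:same-y} with the extreme point $x$ and the base vector $y=-x$ (this is legitimate: the lemma requires $x$ to be an extreme point, which it is, while $y$ may be any vector of $V$). This yields
\[
f(-x+x)=f(-x)+f(x),
\]
that is, $f(0)=f(-x)+f(x)$. Since $f(0)=0$ by the standing assumption of the section, we conclude $f(-x)=-f(x)$, as required.

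There is essentially no obstacle here: all the real work was done in proving Lemma~\ref{l:same-y}. The only points worth a moment's care are that $-x$ indeed lies in the domain of $f$ (it does, as $f$ is defined on all of $V$), and that we really are in the normalised situation $f(0)=0$, which is the hypothesis fixed at the start of this section. (One could alternatively phrase this as extending Lemma~\ref{l:multiple-extreme} to $n=-1$ via the same argument, but going through Lemma~\ref{l:same-y} is the cleanest route.)
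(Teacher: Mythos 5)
Your proof is correct and is exactly the paper's argument: both apply Lemma~\ref{l:same-y} with the extreme point $x$ and base point $y=-x$ to get $0=f(0)=f(-x)+f(x)$. Nothing further to add.
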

\begin{proof}
  This is instant from Lemma~\ref{l:same-y}. Indeed 
\[0=f(0)=f(x+(-x))=f(x)+f(-x).\qedhere
\]
\end{proof}
Next we show that $f$ behaves well on the lattice $\Lambda$. (Recall
from Section~\ref{s:notation} that $\Lambda$ denotes the `lattice'
generated by the extreme points of $B$.)
\begin{corollary}\label{c:Lambda-preserved}
  The function $f$ maps $\Lambda$ to itself with
  \[
  f\left(\sum_{i=1}^n \lambda_ix_i\right)=\sum_{i=1}^n \lambda_i f(x_i)
  \]
  for any $\lambda_i\in\Z$ and $x_i\in \ext(B)$.  Moreover, for any $x\in
  \Lambda$ and $y\in V$, we have $f(y+x)=f(y)+f(x)$.
\end{corollary}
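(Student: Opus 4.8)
The plan is to prove the single statement that for every representation $x=\sum_{i=1}^n\lambda_ix_i$ with $\lambda_i\in\Z$ and $x_i\in\ext(B)$, and every $y\in V$,
\[
f\left(y+\sum_{i=1}^n\lambda_ix_i\right)=f(y)+\sum_{i=1}^n\lambda_if(x_i).
\]
All three assertions of the corollary then follow at once. Taking $y=0$ and using $f(0)=0$ gives $f(x)=\sum_i\lambda_if(x_i)$; since each $f(x_i)$ is an extreme point of $B$ by Corollary~\ref{c:extreme-to-extreme}, this simultaneously shows that $f(\Lambda)\subseteq\Lambda$ and that the first displayed identity of the corollary holds. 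Substituting $f(x)=\sum_i\lambda_if(x_i)$ back into the identity above then yields the ``moreover'' statement $f(y+x)=f(y)+f(x)$.

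To prove the displayed identity I would induct on $m=\sum_{i=1}^n|\lambda_i|$. When $m=0$ the sum is $0$ and the identity is trivial. For the inductive step, pick an index $j$ with $\lambda_j\neq0$ and set $\eps_j=\lambda_j/|\lambda_j|\in\{-1,+1\}$; since $B=-B$ the point $\eps_jx_j$ is again an extreme point of $B$. Write $x=x''+\eps_jx_j$, where
\[
x''=\sum_{i\neq j}\lambda_ix_i+(\lambda_j-\eps_j)x_j
\]
is an integer combination of extreme points whose coefficients have absolute values summing to $m-1$ (here $|\lambda_j-\eps_j|=|\lambda_j|-1$ because $\lambda_j$ and $\eps_j$ have the same sign). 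Applying Lemma~\ref{l:same-y} to the extreme point $\eps_jx_j$ with base point $y+x''$ gives $f(y+x)=f(y+x'')+f(\eps_jx_j)$; applying the inductive hypothesis to $x''$ with base point $y$ evaluates $f(y+x'')$; and $f(\eps_jx_j)=\eps_jf(x_j)$, trivially when $\eps_j=1$ and by Corollary~\ref{c:neg-of-ext} when $\eps_j=-1$. Adding these, the coefficient of $f(x_j)$ becomes $(\lambda_j-\eps_j)+\eps_j=\lambda_j$, and the desired identity drops out.

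I do not anticipate any real obstacle: the whole argument is a bookkeeping induction driven by the additivity of $f$ on a single extreme point (Lemma~\ref{l:same-y}) together with its behaviour under negation (Corollary~\ref{c:neg-of-ext}). The only things needing a moment's attention are the choice of induction parameter $\sum_i|\lambda_i|$ rather than the number of summands (so that removing one signed extreme point strictly decreases it while keeping the remainder an integer combination of extreme points), and the trivial observation that $\ext(B)=-\ext(B)$, so that $\eps_jx_j$ is a legitimate extreme point to feed into Lemma~\ref{l:same-y}.
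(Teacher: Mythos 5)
Your proof is correct and follows essentially the same route as the paper, whose proof is simply "apply Lemma~\ref{l:same-y} and Corollary~\ref{c:neg-of-ext} repeatedly"; your induction on $\sum_i|\lambda_i|$ is just a careful formalisation of that "repeatedly", and the bookkeeping (including $|\lambda_j-\eps_j|=|\lambda_j|-1$ and the symmetry $\ext(B)=-\ext(B)$) is all sound.
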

\begin{proof}
  Both parts follow by applying Lemma~\ref{l:same-y} and
  Corollary~\ref{c:neg-of-ext} repeatedly.
\end{proof}
\begin{lemma}\label{l:lattice-isometry}
  $f$ restricted to $\Lambda$ is an isometry.  
\end{lemma}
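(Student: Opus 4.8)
$f$ restricted to $\Lambda$ is an isometry, where $f$ is a continuous step-isometry of $V$ fixing $0$.

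We already know (Corollary~\ref{c:Lambda-preserved}) that $f$ acts additively on $\Lambda$ with $f(\sum \lambda_i x_i) = \sum \lambda_i f(x_i)$, and from the previous section that $f$ is an isometry on $\ext(B)$. We also have Lemma~\ref{l:multiple-isom}: if $f(nx) = nf(x)$ and $f(ny) = nf(y)$ for all $n$, then $\|x - y\| = \|f(x) - f(y)\|$.

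The plan is to reduce to Lemma~\ref{l:multiple-isom}. Given two points $x, y \in \Lambda$, it suffices by that lemma to show $f(nx) = nf(x)$ for every $n \in \N$ and every $x \in \Lambda$ (then apply it to the pair $x, y$). So write $x = \sum_{i=1}^k \lambda_i x_i$ with $\lambda_i \in \Z$, $x_i \in \ext(B)$. By Corollary~\ref{c:Lambda-preserved} applied to $nx = \sum_i (n\lambda_i) x_i$, we get $f(nx) = \sum_i n\lambda_i f(x_i) = n \sum_i \lambda_i f(x_i) = n f(x)$. That's it — the additivity of $f$ on $\Lambda$ already encodes the needed "$f(nx) = nf(x)$" property for free, with no appeal to extreme-point geometry beyond what Corollary~\ref{c:Lambda-preserved} provides.

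So the proof is short: fix $x, y \in \Lambda$; by the displayed formula in Corollary~\ref{c:Lambda-preserved}, $f(nx) = n f(x)$ and $f(ny) = n f(y)$ for all $n \in \N$; now Lemma~\ref{l:multiple-isom} gives $\|x - y\| = \|f(x) - f(y)\|$. Since $x, y$ were arbitrary in $\Lambda$, $f$ restricted to $\Lambda$ is an isometry.

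I don't anticipate a real obstacle here — the substantive work (additivity on $\Lambda$, the $n\to\infty$ averaging trick) was already done in the preceding lemmas. The only thing to be slightly careful about is that Corollary~\ref{c:Lambda-preserved}'s formula is stated for integer coefficients, so one must present $nx$ with its integer coefficients $n\lambda_i$ rather than trying to "scale" a real combination; but that is immediate. One could alternatively phrase it via Lemma~\ref{l:same-y} and Corollary~\ref{c:neg-of-ext} directly, but routing through Lemma~\ref{l:multiple-isom} is cleanest.

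=== PROOF ===

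\begin{proof}
  Let $x,y\in\Lambda$. Write $x=\sum_{i=1}^{k}\lambda_ix_i$ with
  $\lambda_i\in\Z$ and $x_i\in\ext(B)$. For any $n\in\N$ we have
  $nx=\sum_{i=1}^{k}(n\lambda_i)x_i$, so by
  Corollary~\ref{c:Lambda-preserved},
  \[
    f(nx)=\sum_{i=1}^{k}(n\lambda_i)f(x_i)
         =n\sum_{i=1}^{k}\lambda_if(x_i)=nf(x).
  \]
  The same argument gives $f(ny)=nf(y)$ for all $n\in\N$. Hence, by
  Lemma~\ref{l:multiple-isom}, $\|x-y\|=\|f(x)-f(y)\|$. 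Since $x,y\in\Lambda$
  were arbitrary, $f$ restricted to $\Lambda$ is an isometry.
\end{proof}
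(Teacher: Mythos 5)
Your proof is correct and is essentially identical to the paper's: both deduce $f(nx)=nf(x)$ for $x\in\Lambda$ from the additivity formula in Corollary~\ref{c:Lambda-preserved} and then apply Lemma~\ref{l:multiple-isom}. No issues.
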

\begin{proof}
  By Corollary~\ref{c:Lambda-preserved}
  \[
  f\left(\sum_{i=1}^n \lambda_ix_i\right)=\sum_{i=1}^n \lambda_i f(x_i).
  \]
  In particular for any $n\in \N$ and $x\in \Lambda$ we have
  $f(nx)=nf(x)$. Thus Lemma~\ref{l:multiple-isom} shows that, for any
  $x,y\in \Lambda$, we have $\|x-y\|=\|f(x)-f(y)\|$; i.e., $f$ is an
  isometry on $\Lambda$.
\end{proof}
Of course this isometry extends from $\Lambda$ to $\overline\Lambda$.
\begin{corollary}
  $f$ restricted to the closure  $\overline\Lambda$ of $\Lambda$ is an additive isometry.
\end{corollary}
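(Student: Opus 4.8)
The plan is to push the two properties established for $\Lambda$ — that $f|_\Lambda$ is additive (Corollary~\ref{c:Lambda-preserved}) and an isometry (Lemma~\ref{l:lattice-isometry}) — through to $\overline\Lambda$ by continuity, using that $f$ is continuous on all of $V$ (Corollary~\ref{c:bijective-V}) and that $\Lambda$ is dense in $\overline\Lambda$.

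First I would check additivity. Given $x,y\in\overline\Lambda$, choose sequences $(x_n),(y_n)$ in $\Lambda$ with $x_n\to x$ and $y_n\to y$. Then $x_n+y_n\in\Lambda$ and $x_n+y_n\to x+y$, so by Corollary~\ref{c:Lambda-preserved} we have $f(x_n+y_n)=f(x_n)+f(y_n)$; letting $n\to\infty$ and using continuity of $f$ and of addition gives $f(x+y)=f(x)+f(y)$. For the isometry property, with the same sequences Lemma~\ref{l:lattice-isometry} gives $\|x_n-y_n\|=\|f(x_n)-f(y_n)\|$, and letting $n\to\infty$, using continuity of $f$ and of the norm, yields $\|x-y\|=\|f(x)-f(y)\|$. (One also checks $f(\overline\Lambda)=\overline\Lambda$: $f$ maps $\Lambda$ into $\Lambda$ by Corollary~\ref{c:Lambda-preserved}, hence $\overline\Lambda$ into $\overline\Lambda$ by continuity, and applying the same reasoning to $f^{-1}$, which is again a step-isometry of $V$ fixing $0$, gives the reverse inclusion.)

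There is no real obstacle here beyond the observations that $\Lambda$ is dense in its closure by definition, and that addition and the norm are jointly continuous; the content is entirely carried by the two results on $\Lambda$ proved in this section.
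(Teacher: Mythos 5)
Your proof is correct and follows exactly the paper's argument, which is simply the one-line observation that $f$ is continuous and is an additive isometry on $\Lambda$; you have just written out the sequence-passing details (plus a surjectivity check) that the paper leaves implicit.
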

\begin{proof}
  $f$ is continuous and is an additive isometry on $\Lambda$.
\end{proof}
Our final aim in this section is to show that there exists an isometry
$Q$ of $V$ such that $Q\circ f$ fixes $\Lambda$ pointwise. Obviously
$Q\circ f$ is also a step-isometry so in our later arguments we are
able to reduce to the case when $f$ fixes $\Lambda$.
\begin{lemma}
  There exists a unique linear isometry $\hat f\colon V\to V$ such
  that $\hat f$ and $f$ agree on $\overline\Lambda$.
\end{lemma}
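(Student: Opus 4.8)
The plan is to obtain $\hat f$ as the linear extension of the additive isometry $f|_{\overline\Lambda}$. By Corollary~\ref{c:Lambda-preserved} and Lemma~\ref{l:lattice-isometry}, together with continuity (Corollary~\ref{c:bijective-V}), $f$ restricted to $\overline\Lambda$ is an additive isometry; and by Lemma~\ref{l:basis-isom} we may fix a basis $e_1,\dots,e_d$ of unit vectors of $V$ with $\overline\Lambda=\sum_{i<r}\R e_i\oplus\sum_{i\ge r}\Z e_i$. I would then \emph{define} $\hat f\colon V\to V$ to be the unique linear map with $\hat f(e_i)=f(e_i)$ for each $i$. Uniqueness in the statement is immediate from this: the $e_i$ lie in $\overline\Lambda$ and span $V$, so any linear map agreeing with $f$ on $\overline\Lambda$ agrees with $\hat f$ on the basis, hence everywhere.

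Next I would verify that $\hat f$ agrees with $f$ on all of $\overline\Lambda$. For a ``discrete'' direction $i\ge r$, additivity of $f$ on $\overline\Lambda$ gives $f(ne_i)=nf(e_i)$ for $n\in\Z$. For a ``continuous'' direction $i<r$ the whole line $\R e_i$ lies in $\overline\Lambda$, so $t\mapsto f(te_i)$ is an additive map $\R\to V$, and since $f$ is continuous this map is $\R$-linear, i.e.\ $f(te_i)=tf(e_i)$ for all $t\in\R$. Writing a general point of $\overline\Lambda$ as $x=\sum_i c_ie_i$ with $c_i\in\R$ for $i<r$ and $c_i\in\Z$ for $i\ge r$, repeated application of additivity of $f$ on $\overline\Lambda$ gives $f(x)=\sum_i c_if(e_i)=\hat f(x)$.

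The substantive step is to show that $\hat f$ is an isometry of $V$; this is not automatic, since $\overline\Lambda$ need not be dense. The idea is to amplify the coarse density of $\Lambda$ provided by Lemma~\ref{l:lambda-cover} by scaling. Given $v,w\in V$, for each $n\in\N$ choose $p_n,q_n\in\Lambda\subseteq\overline\Lambda$ with $\|nv-p_n\|\le d/2$ and $\|nw-q_n\|\le d/2$, so that $\big|\,\|p_n-q_n\|-n\|v-w\|\,\big|\le d$. Since $\hat f$ is linear on a finite-dimensional space it is bounded, say $\|\hat f(u)\|\le M\|u\|$; as $\hat f=f$ on $\overline\Lambda$ and $f$ is an isometry there, $\|\hat f(p_n)-\hat f(q_n)\|=\|p_n-q_n\|$, while $\|\hat f(p_n)-\hat f(q_n)-n(\hat f(v)-\hat f(w))\|\le Md$ by linearity, so $\big|\,\|\hat f(p_n)-\hat f(q_n)\|-n\|\hat f(v)-\hat f(w)\|\,\big|\le Md$. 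Combining the two estimates, dividing by $n$ and letting $n\to\infty$ gives $\|\hat f(v)-\hat f(w)\|=\|v-w\|$, so $\hat f$ is a linear isometry (necessarily bijective, e.g.\ by Corollary~\ref{c:fin-dim-mazur-ulam}). I expect this scaling argument to be the only genuinely delicate point; the rest is bookkeeping with additivity and continuity.
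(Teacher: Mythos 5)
Your proof is correct, but it takes a genuinely different route from the paper's. The paper defines $\hat f$ on $\Q\Lambda$ by $\hat f(qv)=qf(v)$ for $q\in\Q$, $v\in\Lambda$; additivity of $f$ on $\Lambda$ (Corollary~\ref{c:Lambda-preserved}) makes this well defined, $\Q$-linear and an isometry on $\Q\Lambda$, and since $\Lambda$ spans $V$ the set $\Q\Lambda$ is \emph{dense} in $V$ (even when $\overline\Lambda$ is not, e.g.\ $\Q\Z^d=\Q^d$), so $\hat f$ extends by continuity to a linear isometry of $V$, with the isometry property inherited for free from the dense set. You instead pin $\hat f$ down on the basis of Lemma~\ref{l:basis-isom}, check agreement on $\overline\Lambda$ directly (correctly handling the continuous directions via continuity of $f$), and then must prove the isometry property on all of $V$ by hand; your amplification argument --- rescaling by $n$, approximating by $\Lambda$-points within $d/2$ via Lemma~\ref{l:lambda-cover}, and using boundedness of the linear map to control the error before dividing by $n$ --- is a valid substitute, and you rightly identify that this step is not automatic since $\overline\Lambda$ need not be dense. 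The trade-off is that the paper's use of rational scaling packages your amplification step into a one-line density claim, while your version is more explicit about where the density is manufactured and relies on the structural Lemma~\ref{l:basis-isom}, which the paper's proof does not need.
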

\begin{proof}
  First, define $\hat f$ on $\Q \Lambda$ by $f(qv)=qf(v)$ where $q\in
  \Q$ and $v\in \Lambda$. This is well defined and linear since $f$ is
  additive on $\Lambda$. Since $f$ is an isometry on $\Lambda$, $\hat
  f$ is an isometry on $\Q \Lambda$. Now, since $\Lambda$ is spanning,
  $\Q\Lambda$ is dense in $V$ and thus $\hat f$ extends to a linear
  isometry on $V$.

  The uniqueness is trivial since $\Lambda$ is spanning.
\end{proof}

\begin{corollary}\label{c:isometry}
  There exists an isometry $Q$ of $V$ such that $Q \circ f$ fixes
  $\Lambda$ pointwise. 
\end{corollary}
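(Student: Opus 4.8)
The plan is to take $Q$ to be the inverse of the linear isometry $\hat f$ produced by the preceding lemma. Recall that $\hat f\colon V\to V$ is a linear map that agrees with $f$ on $\overline\Lambda$, in particular on $\Lambda$, and is an isometry. Since $V$ is finite-dimensional, a linear isometry is necessarily injective and hence bijective (alternatively, invoke Corollary~\ref{c:fin-dim-mazur-ulam}), so $\hat f^{-1}$ exists and is again a (linear) isometry of $V$.

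Now set $Q=\hat f^{-1}$. For any $x\in\Lambda\subseteq\overline\Lambda$ we have $f(x)=\hat f(x)$, and therefore
\[
(Q\circ f)(x)=\hat f^{-1}\bigl(\hat f(x)\bigr)=x.
\]
Thus $Q\circ f$ fixes $\Lambda$ pointwise, as required. (Note also that $Q\circ f$ is still a step-isometry of $V$, being the composition of a step-isometry with an isometry, which is the reason this reduction is useful in the sequel.)

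There is essentially no obstacle here: the work has all been done in establishing that $f$ restricted to $\overline\Lambda$ is an additive isometry extending to a genuine linear isometry $\hat f$ of the whole space. The corollary is just the observation that composing with $\hat f^{-1}$ straightens $f$ out on $\Lambda$. The only point worth stating explicitly is the use of finite-dimensionality to pass from "$\hat f$ is a linear isometry" to "$\hat f$ is invertible."
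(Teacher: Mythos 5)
Your proof is correct and essentially identical to the paper's: the paper takes $Q$ to be the linear isometry extending $f^{-1}$ on $\overline\Lambda$, which by uniqueness is exactly your $\hat f^{-1}$. The remark about finite-dimensionality guaranteeing invertibility is a fine (and harmless) extra detail that the paper sidesteps by defining $Q$ directly as the extension of $f^{-1}$.
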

\begin{proof}
  Let $Q$ be the isometry extending $f^{-1}$, as guaranteed by the
  previous lemma. Then $Q\circ f$ fixes $\Lambda$ pointwise.
\end{proof}

\section{Extreme lines and preserved directions}\label{s:extreme-lines}
In this section we assume that $f$ is a step-isometry of $V$ that
fixes~$\Lambda$ pointwise, and so, in particular, $f(0)=0$.

Our aim in this section is to show that many directions are unchanged,
or `preserved'.
\begin{defn}
  A \emph{preserved direction} is a vector $x$ such that, for all
  $\alpha\in \R$ and for all $y\in V$, the vector $f(y+\alpha x)-f(y)$
  is a multiple of $x$.
\end{defn}
\noindent%
In particular, since we are assuming $f(0)=0$, for any preserved
direction $x$, $f(x)$ is a multiple of $x$.

Preserved directions turn out to be closely related to
extreme lines, which are a standard generalisation of the notion of
extreme points.
\begin{defn}
  Suppose $A$ is a convex body. An \emph{extreme line} of $A$ is a
  line segment $[x,y]$ in $A$ such that, for all $z\in [x,y]$, if $z$ is a
  convex combination of $s,t \in A$ then $s,t\in [x,y]$.
\end{defn}
\begin{remark}
  Obviously, if $[x,y]$ is an extreme line then $x$ and $y$ are
  extreme points of $A$.
\end{remark}

Just as extreme points are characterised by the intersection
properties of balls, so are extreme lines.
\begin{lemma}\label{l:ext-lines=ball}
  Suppose $[x,y]$ is an extreme line of the unit ball $B=B(0,1)$. Then
  \[[x,y]=B(0,1)\cap B(x+y,1).\]
\end{lemma}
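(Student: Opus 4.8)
The plan is to prove the set equality by establishing the two inclusions separately, both of which should be short. The key observation — and really the only place where the hypothesis that $[x,y]$ is an \emph{extreme} line enters — is that the midpoint $m=\tfrac12(x+y)$ of the segment is the natural point at which to test the extreme-line property.

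For the inclusion $[x,y]\subseteq B(0,1)\cap B(x+y,1)$ I would argue as follows: take $z\in[x,y]$, say $z=tx+(1-t)y$ with $t\in[0,1]$. Then $z\in[x,y]\subseteq B$ gives $\|z\|\le1$, and the reflected point $(x+y)-z=(1-t)x+ty$ again lies in $[x,y]\subseteq B$, so $\|(x+y)-z\|\le1$; that is, $z\in B(x+y,1)$. This direction uses nothing beyond convexity of $B$ and the fact that the whole segment lies in $B$.

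For the reverse inclusion $B(0,1)\cap B(x+y,1)\subseteq[x,y]$: given $z$ in the intersection, set $w=(x+y)-z$. Then $\|w\|=\|(x+y)-z\|\le1$, so $w\in B$, and by construction $z+w=x+y$, hence $\tfrac12 z+\tfrac12 w=\tfrac12(x+y)=m\in[x,y]$. Thus $m$, a point of the segment $[x,y]$, is exhibited as a convex combination of the two points $z,w$ of $B$; the definition of an extreme line then forces $z,w\in[x,y]$, and in particular $z\in[x,y]$, as required.

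I do not expect any genuine obstacle here: once one thinks to invoke the extreme-line condition at the midpoint, both inclusions are immediate from the triangle inequality. The argument parallels the one for extreme points in Lemmas~\ref{l:nx} and~\ref{l:characterize-extreme}, with the point $2x$ there replaced by $x+y$ and the singleton $\{x\}$ replaced by the segment $[x,y]$.
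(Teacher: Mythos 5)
Your proof is correct and follows essentially the same route as the paper: the forward inclusion via convexity of $B$ (the paper notes $x,y\in B\cap B(x+y,1)$ and invokes convexity, which is equivalent to your reflection argument), and the reverse inclusion by writing the midpoint $\tfrac12(x+y)$ as a convex combination of $z$ and $(x+y)-z$ and applying the extreme-line property. Nothing to add.
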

\begin{proof}
  Since $x,y\in B$, we have $x,y\in B(x+y,1)$; i.e., $x,y\in B\cap
  B(x+y,1)$. Hence, by convexity, $[x,y]\subseteq B\cap B(x+y,1)$.
  
  Suppose that $z\in B\cap B(x+y,1)$. Then $z\in B$ and $x+y-z\in B$. 
  Thus
  \[\frac{x+y-z}2+\frac{z}{2}=\frac{x+y}2
  \]
  is a point in $[x,y]$ that is a convex combination of points in
  $B$. Since $[x,y]$ is an extreme line this implies that $z \in
  [x,y]$.
\end{proof}

We will be interested in the directions of the extreme lines rather
than the lines themselves. Thus we make the following definition.
\begin{defn}
  Suppose $B$ is the unit ball of a normed space $V$. An \emph{extreme
    line direction} is any non-zero multiple of the vector $x-y$ where
  $[x,y]$ is an extreme line in $B$.
\end{defn}
\begin{remark}
  We view extreme line directions that are (non-zero) multiples of
  each other as the \emph{same} extreme line direction.
\end{remark}

The key result for preserved directions is that all extreme line
directions are preserved directions.
\begin{proposition}\label{p:ext-line=preserved-line}
  Suppose $B$ is the unit ball and $[x,y]$ is an extreme line. Then
  $x-y$ is a preserved direction.
\end{proposition}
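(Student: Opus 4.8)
The plan is to prove something a little stronger at the level of segments and then read off the statement about directions. Concretely, I will show that $f$ carries \emph{every} translate of the extreme line $[x,y]$ into a line parallel to $w:=x-y$, and then glue these facts together. Note that throughout this section $f$ fixes $\Lambda$ pointwise and is a bijection of $V$.

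\textbf{Step 1 (the key step): $f\bigl(z+[x,y]\bigr)=f(z)+[x,y]$ for every $z\in V$.} By Lemma~\ref{l:ext-lines=ball}, $[x,y]=B(0,1)\cap B(x+y,1)$, and translating this identity by $z$ gives $z+[x,y]=B(z,1)\cap B(z+(x+y),1)$. Since $x,y\in\ext(B)\subseteq\Lambda$ we have $x+y\in\Lambda$, so Corollary~\ref{c:Lambda-preserved} (applied with its lattice vector taken to be $x+y$ and its free vector taken to be $z$) gives $f(z+(x+y))=f(z)+(x+y)$, using that $f$ fixes $\Lambda$ pointwise. Combining this with $f(B(z,1))=B(f(z),1)$ from Corollary~\ref{c:closed-balls}, the bijectivity of $f$, and a final application of Lemma~\ref{l:ext-lines=ball}, we get
\[
f\bigl(z+[x,y]\bigr)=B(f(z),1)\cap B\bigl(f(z)+(x+y),1\bigr)=f(z)+[x,y].
\]
In particular $f$ sends the segment $z+[x,y]$, which has direction $w$, into the affine line $f(z)+\R w$.

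\textbf{Steps 2 and 3 (from segments to the definition of a preserved direction).} Given $z'\in V$ and $\alpha\in[0,1]$, the set $\{z'+tw:t\in[0,1]\}$ equals the translate $(z'-y)+[x,y]$ of the extreme line, and it contains both $z'$ and $z'+\alpha w$; by Step~1, $f$ maps this segment into a line of direction $w$, hence $f(z'+\alpha w)-f(z')\in\R w$. To pass to arbitrary $\alpha\in\R$, set $g_\alpha(z)=f(z+\alpha w)-f(z)$ and note the identities $g_{\alpha+\beta}(z)=g_\beta(z+\alpha w)+g_\alpha(z)$ and $g_{-\alpha}(z+\alpha w)=-g_\alpha(z)$. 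The first propagates ``$g_\alpha(z)\in\R w$ for all $z$'' from $\alpha\in[0,1]$ to all $\alpha\ge 0$ by induction on $\lceil\alpha\rceil$, and the second then extends it to all $\alpha\in\R$; this is exactly the assertion that $w=x-y$ is a preserved direction.

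The only point that needs genuine care is Step~1, namely keeping both balls whose intersection cuts out $z+[x,y]$ under control: the first ball is handled because $f$ maps a closed ball of integer radius onto the closed ball of the same radius about the image of its centre, and the second because its centre $z+(x+y)$ is translated by the \emph{known} lattice vector $x+y$ under $f$. Everything after that is formal. (One could instead observe that $f$ restricted to $z+[x,y]$ is a continuous bijection onto $f(z)+[x,y]$, hence monotone, but this extra information is not needed — containment in a parallel line already does the job.)
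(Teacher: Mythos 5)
Your proof is correct and takes essentially the same route as the paper: both arguments rest on Lemma~\ref{l:ext-lines=ball} together with Corollaries~\ref{c:closed-balls} and~\ref{c:Lambda-preserved} to show that $f$ carries a translate of the extreme line into a parallel segment, because that segment is the intersection of two integer-radius balls whose centres differ by the lattice vector $x+y$. The only difference is one of packaging: the paper scales the line to $[nx,ny]$ with $n=\lceil\alpha\rceil$ so that both points fit into a single segment, whereas you work with unit translates and extend from $\alpha\in[0,1]$ to all of $\R$ via the telescoping identity for $g_\alpha$.
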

\begin{proof}
  Suppose $v_1,v_2\in V$ satisfy $v_2=v_1+\alpha(x-y)$ for some
  $\alpha>0$. Let $n=\lceil \alpha \rceil$ and $u=v_1-n  x$. Then we have
$v_1,v_2\in u+[nx,ny]$.

  Now, by Lemma~\ref{l:ext-lines=ball}, for any point $z\in u+[nx,ny]$
  we have $z\in B(u,n)\cap B(u+nx+ny,n)$.  Hence, since $f$ is a
  step-isometry, $f(z)\in B(f(u),n)\cap B(f(u+nx+ny),n)$. Since
  $nx+ny\in \Lambda$, by Corollary~\ref{c:Lambda-preserved}, we have
  $f(u+nx+ny)=f(u)+nx+ny$. Thus,
  \[f(z)\in B(f(u),n)\cap B(f(u)+nx+ny,n)=f(u)+[nx,ny]
  \]
  by Lemma~\ref{l:ext-lines=ball} again.  In particular both $f(v_1)$
  and $f(v_2)$ lie in $f(u)+[nx,ny]$. Thus
  \[f(v_2)-f(v_1)=\beta (x-y)\] for some $\beta$, as claimed.
\end{proof}
\begin{remark}
  The map $f$ need not preserve the directions of the extreme points:
  indeed consider the $\ell_\infty^2$ case where $f$ can treat each
  coordinate separately and, thus, need not preserve the line $y=x$
  through the extreme point $(1,1)$.
\end{remark}
\section{Strongly fixed subspaces}\label{s:fixed-subspace}
In this section we assume that $f$ is a step-isometry of $V$ that
fixes~$\Lambda$ pointwise.

\begin{defn}
  We say a subspace $U$ of $V$ is \emph{strongly fixed} if, for all
  $u\in U$ and $v\in V$, we have $f(u+v)=u+f(v)$.
\end{defn}
\begin{remark}
  It is immediate from the definition that if $U$ and $U'$ are strongly fixed
  subspaces then $U+U'$ is a strongly fixed subspace.
\end{remark}

We have seen (Corollary~\ref{c:Lambda-preserved}) that $f(u+v)=u+f(v)$
for all $u\in \overline\Lambda$ and $v\in V$. Hence, the continuous
subspace $U_0$ of $\overline\Lambda$ is a strongly fixed subspace. Our
aim in the next two sections is to show that the whole of $U$ in the
$\ell_\infty$-decomposition of $V$ is strongly fixed; in this section
we show that a `large' subspace is strongly fixed. Then, in the next
section, we show that what is left is essentially an $\ell_\infty$
subspace -- in particular, that it is spanned by
$\ell_\infty$-directions.

\begin{lemma}\label{l:indep-preserved}
  Suppose $x_1,x_2,\dots x_k$ is a linearly independent set of
  preserved directions. Then
  \[
  f\left(\sum_{i=1}^k \lambda_ix_i\right)=\sum_{i=1}^k f(\lambda_ix_i)
  \]
  for any $\lambda_1,\lambda_2,\dots,\lambda_k\in \R$.
\end{lemma}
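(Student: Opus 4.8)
The plan is to induct on $k$, the number of linearly independent preserved directions. The case $k=1$ is vacuous. For the inductive step, suppose the result holds for $k-1$ and let $x_1,\dots,x_k$ be linearly independent preserved directions, and fix real scalars $\lambda_1,\dots,\lambda_k$. Set $v = \sum_{i=1}^{k-1}\lambda_i x_i$ and $w = \lambda_k x_k$, so the target is $f(v+w) = f(v) + f(w)$. First I would use that $x_k$ is a preserved direction to write $f(v+w) - f(v) = \mu x_k$ for some scalar $\mu$, and separately $f(w) - f(0) = f(w) = \mu' x_k$ for some $\mu'$ (using $f(0)=0$); so it suffices to show $\mu = \mu'$. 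The idea is to pin down $\mu$ and $\mu'$ by comparing with the other preserved directions: each $x_i$ with $i<k$ is also a preserved direction, so moving in the $x_i$ directions from $v+w$ and from $w$ changes $f$ only by multiples of $x_i$.

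The key step is to exploit the step-isometry property together with linear independence to show that $\mu$ does not depend on the base point. Concretely, by the inductive hypothesis applied to $x_1,\dots,x_{k-1}$, we have $f(v) = \sum_{i<k} f(\lambda_i x_i)$, and similarly $f(v+w) = f(w) + \mu x_k$ where, by applying the preserved-direction property of each $x_i$ successively starting from $w$, one shows $f(w + \sum_{i<k}\lambda_i x_i) - f(w) = \sum_{i<k} c_i x_i$ for some scalars $c_i$ — then compares the two expressions for $f(v+w)$. The crucial point is that the difference $f(v+w) - f(v) - f(w)$ lies simultaneously in $\spanof{x_k}$ (by the $x_k$ preserved-direction property with base point $v$, minus base point $0$) and, by a parallel computation peeling off the $x_i$ one at a time, in $\spanof{x_1,\dots,x_{k-1},\text{something}}$; linear independence of $x_1,\dots,x_k$ forces the common value into $\spanof{x_k}$, and then a boundedness/scaling argument as in Lemma~\ref{l:same-y} forces it to be $0$.

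I expect the main obstacle to be the last scaling step: showing $\mu = \mu'$ rather than merely that both displacements are multiples of $x_k$. The natural tool is the one already used repeatedly in Section~\ref{s:lattice}: the pairs of points $\{t x_k : t\in\N\}$ and $\{v + t x_k : t\in\N\}$ are a bounded distance $\|v\|$ apart, so after scaling (working in the norm $\tfrac1n\|\cdot\|$ and using that $f$ is a step-isometry there too) the images $f(t x_k) = \mu' t x_k$ and $f(v + t x_k) = f(v) + \mu t x_k$ must also stay a bounded distance apart as $t\to\infty$; since $\|(f(v) + \mu t x_k) - \mu' t x_k\| = \|f(v) + (\mu-\mu')t x_k\|$, boundedness in $t$ forces $\mu = \mu'$. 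Here one needs to know $\mu,\mu'$ are genuinely independent of $t$, which follows from Lemma~\ref{l:multiple-extreme}-style reasoning applied along the preserved direction $x_k$ — or more simply from the definition of preserved direction applied at scaled base points. Assembling these pieces gives $f(v+w) = f(v) + f(w)$, and then a further induction (splitting off one $x_i$ at a time from the left-hand sum) upgrades this to the full statement $f(\sum_i \lambda_i x_i) = \sum_i f(\lambda_i x_i)$.
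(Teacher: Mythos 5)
Your second paragraph already contains a complete proof, and it is essentially the paper's own argument: with your notation $v=\sum_{i<k}\lambda_ix_i$ and $w=\lambda_kx_k$, write $f(v+w)$ in two ways, once as $f(v)+\mu x_k=\sum_{i<k}f(\lambda_ix_i)+\mu x_k$ (inductive hypothesis on $x_1,\dots,x_{k-1}$, then the preserved direction $x_k$), and once as $f(w)+\sum_{i<k}c_ix_i$ (peeling off the $x_i$ with $i<k$ one at a time, or equivalently applying the inductive hypothesis to $x_2,\dots,x_k$ as the paper does). Since each $f(\lambda_ix_i)$ is a multiple of $x_i$, the difference $f(v+w)-f(v)-f(w)$ lies in $\spanof{x_k}\cap\spanof{x_1,\dots,x_{k-1}}=\{0\}$, and you are done. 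Your phrase ``forces the common value into $\spanof{x_k}$'' undersells what linear independence gives you: it forces the value to be $0$ outright, with no further argument.

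The step you single out as the main obstacle --- the boundedness/scaling argument of your third paragraph --- is therefore unnecessary, and as described it would not work. It rests on $f(tx_k)=\mu' tx_k$ and $f(v+tx_k)=f(v)+\mu tx_k$ with $\mu,\mu'$ independent of $t$, i.e.\ on $f$ being linear along the preserved direction $x_k$. That is false in general: the definition of a preserved direction only says that each displacement $f(y+\alpha x_k)-f(y)$ is \emph{some} multiple of $x_k$, and Lemma~\ref{l:multiple-extreme} applies to extreme points and integer multiples, not to arbitrary preserved directions and arbitrary scalars. Indeed, for an $\ell_\infty$-direction $e_i$ the eventual classification gives $f(\lambda e_i)=\bigl(\lfloor\lambda\rfloor+g_i(\lambda-\lfloor\lambda\rfloor)\bigr)e_i$, which is not linear in $\lambda$. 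Without linear growth in $t$, the step-isometry property only yields that $\|f(v)+(h(t)-g(t))x_k\|$ stays bounded, i.e.\ that the two unknown multiples differ by a bounded amount --- which does not force them to agree at the particular value $t=1$. So delete the third paragraph and instead finish the second one by comparing the two expressions for $f(v+w)$ coefficient-by-coefficient in the linearly independent set $x_1,\dots,x_k$.
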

\begin{proof}
  We prove this by induction on $k$. It is trivial for $k=1$.

  Suppose it is true for $k-1$: i.e.,
  \[
  f\left(\sum_{i=1}^{k-1} \lambda_ix_i\right)=\sum_{i=1}^{k-1} f(\lambda_ix_i).
  \]
  Since $\sum_{i=1}^k \lambda_ix_i-\sum_{i=1}^{k-1}
  \lambda_ix_i=\lambda_kx_k$ which is a preserved direction we see
  that 
  \[
  f\left(\sum_{i=1}^{k} \lambda_ix_i\right)
  =f\left(\sum_{i=1}^{k-1} \lambda_ix_i\right)+\mu_k x_k
  =\sum_{i=1}^{k-1} f(\lambda_ix_i)+\mu_k x_k
  \]
  for some $\mu_k$.

  Similarly by applying the induction hypothesis to the last $k-1$
  summands rather than the first we see that
  \[
  f\left(\sum_{i=1}^{k} \lambda_ix_i\right)
  =f\left(\sum_{i=2}^{k} \lambda_ix_i\right)+\mu_1 x_1
  =\sum_{i=2}^{k} f(\lambda_ix_i)+\mu_1 x_1.
  \]
  The $x_i$ are preserved directions so $f(\lambda_ix_i)$ is a
  multiple of $x_i$ for each $i$. Thus, since the $x_i$ are linearly
  independent, we see that $\mu_k x_k=f(\lambda_k x_k)$ as required.
\end{proof}

\begin{lemma}\label{l:min-dep}
  Suppose $x_1,x_2,\dots x_k$ form a minimal linearly dependent set of
  preserved directions, and that $k\ge 3$. Then
  $\left<x_1,x_2,\dots,x_k\right>$ is a strongly fixed subspace.
\end{lemma}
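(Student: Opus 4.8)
The plan is to prove the stronger statement that $f(v+w)=f(v)+w$ for all $v\in V$ and all $w\in W:=\langle x_1,\dots,x_k\rangle$. Minimality of the dependent set gives a relation $\sum_{i=1}^k c_ix_i=0$ in which \emph{every} $c_i\ne 0$, and forces any $k-1$ of the $x_i$ to be linearly independent; in particular $x_1\ne 0$ and $\dim W=k-1\ge 2$.

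Fix $v\in V$ and set $g(x)=f(v+x)-f(v)$. As translations are isometries, $g$ is again a step-isometry, now with $g(0)=0$, and each $x_i$ is still a preserved direction for $g$; hence $g(\alpha x_i)=h_i(\alpha)x_i$ for functions $h_i=h_i^{v}\colon\R\to\R$ with $h_i(0)=0$. The proof of Lemma~\ref{l:indep-preserved} uses only that the map is a step-isometry fixing the origin, so it applies to $g$ and to any $k-1$ of the (independent) directions $x_i$: for each index $m$ and all scalars, $g\bigl(\sum_{i\ne m}\nu_ix_i\bigr)=\sum_{i\ne m}h_i(\nu_i)x_i$. Now feed a single vector $w=\sum_{i=1}^k\lambda_ix_i\in W$ into this identity in $k$ ways: using $\sum c_ix_i=0$ to eliminate $x_m$ we have $w=\sum_{i\ne m}\bigl(\lambda_i-\tfrac{\lambda_m c_i}{c_m}\bigr)x_i$, so $g(w)=\sum_{i\ne m}h_i\bigl(\lambda_i-\tfrac{\lambda_m c_i}{c_m}\bigr)x_i$ for every $m$.

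All these expressions must agree. Expanding the $m=k$ and the $m=k-1$ expression in the basis $(x_1,\dots,x_{k-1})$ of $W$ (rewriting $x_k=-\tfrac1{c_k}\sum_{i<k}c_ix_i$ in the latter) and equating the coefficient of $x_j$ for some $j\le k-2$ — this is where $k\ge 3$ is needed — yields, after the substitutions $H(t)=h_j(c_jt)/c_j$ and $K(t)=h_k(c_kt)/c_k$, the Cauchy-type relation $H(s+d)=H(s)+K(d)$ for all $s,d\in\R$. Setting $s=0$ gives $K=H$, whence $H$ is additive; and since $g$ is a step-isometry with $g(0)=0$ we have $\lfloor|h_j(\alpha)|\,\|x_j\|\rfloor=\lfloor|\alpha|\,\|x_j\|\rfloor$ for all $\alpha$, so $h_j$, and hence $H$, is bounded on bounded sets. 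An additive, locally bounded function is linear, and the floor identity then forces the slope to have modulus $1$: thus $h_j(\alpha)=\mu\alpha$ and, via $K=H$, also $h_k(\alpha)=\mu\alpha$, for a common $\mu\in\{-1,1\}$ (depending a priori on $v$). Equating the coefficient of $x_i$ in the $m=k$ and the $m=i$ expression then gives $h_i(\alpha)=\mu\alpha$ for every remaining $i$ as well. Since $(x_1,\dots,x_{k-1})$ is a basis of $W$, the $m=k$ identity now reads $g(w)=\mu w$ for all $w\in W$; that is, $f(v+w)=f(v)+\mu(v)\,w$ for all $w\in W$, with $\mu(v)\in\{-1,1\}$.

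It remains to show $\mu(v)\equiv 1$. Fixing $w_0\in W\setminus\{0\}$, the map $v\mapsto\mu(v)w_0=f(v+w_0)-f(v)$ is continuous (Corollary~\ref{c:bijective-V}) and takes values in the two-point set $\{w_0,-w_0\}$, so it is constant by connectedness of $V$; write $\mu$ for this value. If $\mu=-1$, then taking $v=\lambda\in\Lambda$ and using $f(\lambda)=\lambda$ gives $f(\lambda+w)=\lambda-w$ for all $w\in W$, and applying the step-isometry property of $f$ to two such points yields $\lfloor\|\eta+\omega\|\rfloor=\lfloor\|\eta-\omega\|\rfloor$ for all $\eta\in\Lambda$ and $\omega\in W$. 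But for large $t>0$ choose, by Lemma~\ref{l:lambda-cover}, a point $\eta\in\Lambda$ with $\|\eta-tw_0\|\le\tfrac12\dim V$. Then $\lfloor\|\eta-tw_0\|\rfloor\le\lfloor\tfrac12\dim V\rfloor$, while $\|\eta+tw_0\|=\|(\eta-tw_0)+2tw_0\|\ge 2t\|w_0\|-\tfrac12\dim V\to\infty$ as $t\to\infty$; for $t$ large this contradicts the displayed equality. Hence $\mu=1$, i.e.\ $f(v+w)=f(v)+w$ for all $v\in V$ and $w\in W$, so $W$ is strongly fixed. I expect the bulk of the work to be the coefficient bookkeeping in the third paragraph that extracts the Cauchy relation — and pins down precisely where $k\ge 3$ enters — together with the last step-isometry argument eliminating the spurious sign $\mu=-1$, which is exactly where the hypothesis that $f$ fixes $\Lambda$ is essential.
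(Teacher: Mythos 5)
Your proof is correct, and the core step is carried out by a genuinely different mechanism from the paper's. The paper also passes to the translate $g(x)=f(v+x)-f(v)$ and exploits the minimal dependence together with Lemma~\ref{l:indep-preserved}, but it pins down the action on a single line $\spanof{x_1}$ by induction on integer multiples: writing $mx_1=(m-1)x_1-\sum_{i\ge 2}\lambda_ix_i$ and cycling through the relation (using Lemma~\ref{l:indep-preserved} twice and the preserved direction $x_k$ to absorb the error) gives $f(mx_1)=mf(x_1)+C'x_k$ with $C'=0$ by independence of $x_1,x_k$; this extends to positive rationals by rescaling and to all reals by continuity, and then the same Lemma~\ref{l:lambda-cover}/$\Lambda$-fixing boundedness argument you use at the very end forces $f(x_1)=x_1$ directly --- note that showing $f(\alpha x_1)=\alpha f(x_1)$ for $\alpha>0$ \emph{before} invoking boundedness kills the scaling factor and the sign in one stroke, so the paper never needs your separate connectedness argument ruling out $\mu=-1$. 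You instead eliminate two different variables from the dependence, compare coefficients in the basis $x_1,\dots,x_{k-1}$ to extract a Cauchy equation, and solve it using the local boundedness supplied by the step-isometry property; this determines all the $h_i$ simultaneously, for every translate $v$ at once, at the price of the coefficient bookkeeping and the final sign elimination. Both routes consume exactly the same ingredients (minimality, $k\ge 3$, Lemma~\ref{l:indep-preserved}, Lemma~\ref{l:lambda-cover}, continuity, and the standing hypothesis that $\Lambda$ is fixed pointwise), and I found no gap in yours; the paper's version is somewhat shorter and avoids functional-equation machinery, while yours isolates more explicitly where the rigidity comes from (additivity plus local boundedness) and where $k\ge 3$ enters.
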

\begin{proof}
  Suppose that $\sum_{i=1}^k \lambda_i x_i=0$ is a non-trivial linear
  dependence. Since the $x_i$ form a minimal linear dependent set all
  the $\lambda_i$ are non-zero. Thus we may assume $\lambda_1=1$.

  We start by showing that for any $m\in \N$ we have $f(m x_1)=m
  f(x_1)$. We prove this by induction. The case $m=1$ is trivial so
  suppose that $f((m-1)x_1)=(m-1)f(x_1)$.
  We have 
\begin{align*}
  f(mx_1)&=f\left((m-1)x_1-\sum_{i=2}^k\lambda_ix_i\right)\\
  &=f\left((m-1)x_1-\sum_{i=2}^{k-1}\lambda_ix_i\right) + Cx_k&\text{(for some $C$, since $x_k$ is a preserved direction)}\\
  &=f((m-1)x_1)+f\left(\sum_{i=2}^{k-1}-\lambda_ix_i\right) + Cx_k\intertext{(by Lemma~\ref{l:indep-preserved} twice, since $x_1,\dots,x_{k-1}$ are linearly independent)}
  &=f((m-1)x_1)+f\left(\sum_{i=2}^k-\lambda_ix_i\right) + C'x_k&\text{(for some $C'$, since $x_k$ is a preserved direction)}\\
  &=(m-1)f(x_1)+f(x_1)+C'x_k&\text{by the inductive hypothesis}\\
  &=mf(x_1)+C'x_k.
\end{align*}
But since $x_1$ is a preserved direction and $x_1,x_k$ are linearly
independent $C'=0$ and the induction is complete. 

Obviously, $\alpha x_1$ is also a preserved direction for any
$\alpha\not=0$, so the above shows that $f(\alpha x_1)=\alpha f(x_1)$
for all $\alpha\in\Q$ with $\alpha>0$. Since $f$ is continuous this
means that $f(\alpha x_1)=\alpha f(x_1)$ for all $\alpha>0$. 

Now, for any $\alpha>0$, by Lemma~\ref{l:lambda-cover} there is a point
$y\in \Lambda$ with $\|\alpha x_1-y\|\le \dim V/2$. Thus, since $f$ is a
step-isometry, we have 
\[
\|f(\alpha x_1)-f(y)\|\le \|\alpha x_1-y\|+1\le \dim V/2+1.
\] 
But $f$ fixes $\Lambda$ pointwise so $f(y)=y$ and, thus, $\|f(\alpha
x_1)-\alpha x_1\|$ is bounded independently of $\alpha$.  Since,
$\|f(\alpha x_1)-\alpha x_1\| =\alpha \|f(x_1)-x_1\|$ this implies
that $f(x_1)=x_1$ and, thus, that $f(\alpha x_1)=\alpha x_1$ for all
$\alpha>0$. The same argument applied to $-x_1$ -- obviously also a
preserved direction -- shows that $f(-\alpha x_1)=-\alpha x_1$. This
shows that $f$ is the identity on $\spanof{x_1}$.

We have shown that $f$ fixed $\spanof{x_1}$ pointwise, but we want to
show more: that $f$ strongly fixes $\spanof{x_1}$. For any $v\in V$,
the function $g$ defined by $g(x)=f(x+v)-f(v)$ is also a step-isometry
and, by Corollary~\ref{c:Lambda-preserved}, fixes $\Lambda$. Moreover,
$g$ also preserves the directions $x_i$. Thus, by the above argument
$g$ is the identity on $\spanof{x_1}$. Hence $f(v+\alpha
x_1)=f(v)+\alpha x_1$ for all $\alpha\in \R$ i.e., $\spanof{x_1}$ is a
strongly fixed subspace.

Since this is true for each $x_i$, we see that $\spanof{x_1,x_2,\dots\
  x_k}$ is a strongly fixed subspace.
\end{proof}

The previous lemmas show that the span of linearly dependent preserved
directions is strongly fixed. Of course, we also know that the
continuous subspace $U_0$ of $\overline{\Lambda}$ is strongly
fixed. Thus we make the following definition to cover the largest
subspace that we know (so far) is strongly fixed. Later, we shall show
that this is the non-$\lid$-component of the
$\ell_\infty$-decomposition.

Before stating the main definition we need a little more
notation. Suppose that $W$ is any subspace of $V$ and $x_1,x_2,\dots,
x_k$ are vectors in $V$. A linear combination of the $x_i$ over $W$ is
any sum of the form $w+\sum_i\lambda_ix_i$, where $w\in W$; the
\emph{span of the $x_i$ over $W$} is $\spanof{W, x_1,x_2,\dots,x_k}$;
the $x_i$ are \emph{linearly independent over $W$} if $\sum_i\lambda
x_i\in W$ implies that $\lambda_i=0$ for all $i$.

\begin{defn}
  Suppose that $V$ is a normed space with unit ball $B$, that $U$ is a
  subspace and $x_i$, $i\in I$ are the extreme line directions in
  $U$. Then $U$ is \emph{well-spanned} if
  \begin{itemize}
    \item it contains the continuous subspace $U_0$ of $\Lambda$
    \item the $x_i$ span $U$ over $U_0$ 
    \item every $x_i\in U\setminus U_0$ can be written as a linear combination of the
    other~$x_j$ over~$U_0$
  \end{itemize}
\end{defn}
First, we show that there is a unique maximal well-spanned subspace and
then that any step-isometry that pointwise fixes $\Lambda$ strongly
fixes this subspace.
\begin{lemma}\label{l:eld-li}
  Suppose that $V$ is a normed space with unit ball $B$. Then there is
  a unique maximal well-spanned subspace $U$. Moreover, the extreme
  line directions outside $U$ are linearly independent over $U$.
\end{lemma}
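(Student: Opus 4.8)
The plan is to establish both claims by elementary linear algebra, using only the definition of \emph{well-spanned} together with finite-dimensionality of $V$ (so that the dimensions of well-spanned subspaces are bounded). First I would note that the continuous subspace $U_0$ is itself well-spanned --- trivially, since it contains $U_0$, the extreme line directions lying in $U_0$ span $U_0$ over $U_0$, and there is no extreme line direction in $U_0\setminus U_0$ to verify the third condition on --- so the family $\mathcal F$ of well-spanned subspaces is non-empty. The crux of the existence argument is to show that $\mathcal F$ is closed under sums: if $U,U'\in\mathcal F$ then $U+U'\in\mathcal F$. The first two conditions are immediate, because $U+U'\supseteq U_0$ and the extreme line directions contained in $U$ (resp.\ $U'$) already span $U$ (resp.\ $U'$) over $U_0$, hence together span $U+U'$ over $U_0$, and all of these are extreme line directions of $B$ lying in $U+U'$.

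For the third condition for $U+U'$, take an extreme line direction $x\in(U+U')\setminus U_0$. If $x\in U$ (or $x\in U'$), apply the third condition for $U$ (resp.\ $U'$) directly, noting that the extreme line directions of $U$ it supplies lie in $U+U'$. If $x$ lies in neither, write $x=a+b$ with $a\in U$, $b\in U'$, and expand $a$ and $b$ using the fact that $U=U_0+\langle\text{extreme line directions in }U\rangle$ and similarly for $U'$; this exhibits $x$ as $u_0+\sum_l\mu_l y_l+\sum_m\nu_m z_m$ with $u_0\in U_0$ and the $y_l,z_m$ extreme line directions of $B$ lying in $U+U'$. In every case the directions used are distinct from $x$: an extreme line direction lying inside a subspace not containing $x$ cannot be a multiple of $x$. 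With closure under sums in hand, the sum of all members of $\mathcal F$ (equivalently, any member of maximal dimension, since the sum of two maximal ones again lies in $\mathcal F$) is the unique maximal well-spanned subspace $U$.

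For the ``moreover'' part, suppose some extreme line directions outside $U$ are linearly dependent over $U$, and pick a minimal such collection $x_1,\dots,x_k$; then $\sum_{i=1}^k\lambda_i x_i\in U$ with every $\lambda_i\neq 0$ (minimality rules out a vanishing coefficient) and $k\ge 2$ (if $k=1$ then $\lambda_1 x_1\in U$ forces $x_1\in U$). Put $U'=\langle U,x_1,\dots,x_k\rangle$, so $U'\supsetneq U$. I claim $U'$ is well-spanned, contradicting the maximality of $U$. It contains $U_0$, and its extreme line directions span it over $U_0$ since those of $U$ span $U$ over $U_0$ and $x_1,\dots,x_k$ are themselves extreme line directions in $U'$. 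For the third condition, let $x$ be an extreme line direction in $U'\setminus U_0$. If $x\in U$, use well-spannedness of $U$. If $x\notin U$ but $x$ is not (a multiple of) any $x_i$, write $x=a+\sum_i c_i x_i$ with $a\in U$ (possible since the $x_i$ span $U'$ over $U$) and expand $a$ over $U_0$ via extreme line directions of $U$. If $x$ equals some $x_{i_0}$, use instead $\lambda_{i_0}x_{i_0}=\sum_i\lambda_i x_i-\sum_{i\neq i_0}\lambda_i x_i$ together with $\sum_i\lambda_i x_i\in U$, and expand the $U$-part over $U_0$. In each case $x$ is written as a $U_0$-combination of extreme line directions of $U'$ other than $x$, again using that directions coming from $U$ cannot equal $x$ when $x\notin U$ and that $x_1,\dots,x_k$ are pairwise distinct directions.

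The only genuinely delicate point --- and hence the main obstacle --- is the third clause of the definition of well-spanned: in each case above one must check that every extreme line direction used to represent $x$ is genuinely a \emph{different} direction from $x$. This is settled by the single observation that an extreme line direction lying in a subspace $W$ is never a non-zero multiple of a vector outside $W$, so all the directions drawn from a well-spanned subspace not containing $x$ are automatically distinct from $x$.
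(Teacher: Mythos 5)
Your proof is correct and follows essentially the same strategy as the paper's: show that $U_0$ is well-spanned and that the family of well-spanned subspaces is closed under sums (giving the unique maximal $U$), then obtain the linear independence by showing that adjoining a minimal collection of extreme line directions outside $U$ that is linearly dependent over $U$ would yield a strictly larger well-spanned subspace, contradicting maximality. The paper's version is much terser --- it simply asserts closure under sums and the well-spannedness of the enlarged subspace --- whereas you supply the verifications (in particular of the third clause of the definition, via the observation that a direction inside a subspace not containing $x$ cannot coincide with the direction of $x$) that the paper leaves implicit.
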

\begin{proof}
  Obviously $U_0$ is well-spanned. Moreover, if $U$ and $U'$ are
  well-spanned then so is $U+U'$. Thus there is a unique maximal
  well-spanned subspace.

  To prove the second part let $U$ be the maximal well-spanned
  subspace and $x_1,x_2,\dots,x_k$ be the extreme line directions in
  $U$.  Suppose that $y_1,y_2,\dots,y_l\in V\setminus U$ is a minimal
  linearly dependent set of extreme line directions over $U$. Then,
  since the $x_i$ span $U$ over $U_0$, we see that, for each $i$,
  $y_i$ can be written as a linear combination of the $\{x_j:1\le j\le
  k\}\cup\{y_j:j\not =i\}$ over $U_0$.  Hence
  $U+\spanof{y_1,y_2,\dots,y_\ell}$ is a well-spanned subspace
  contradicting the maximality of $U$.
  \end{proof}

\begin{corollary}\label{c:well-spanned-fixed}
  Suppose that $V$ is a normed space with maximal well-spanned
  subspace $U$ and that $f$ is a step-isometry fixing
  $\Lambda$. Then $U$ is a strongly fixed subspace.
\end{corollary}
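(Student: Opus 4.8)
The plan is to build up the strongly fixed subspace $U$ from its defining generators, using the two preceding lemmas as the two engines: Lemma~\ref{l:min-dep} handles the extreme line directions via minimal dependencies, and the already-established fact (Corollary~\ref{c:Lambda-preserved}) handles the continuous subspace $U_0$. Recall that a sum of strongly fixed subspaces is strongly fixed, so it suffices to exhibit $U$ as a span of pieces each of which we can see is strongly fixed.

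First I would recall that $U_0$, the continuous subspace of $\overline\Lambda$, is strongly fixed: this is exactly the second assertion of Corollary~\ref{c:Lambda-preserved}, since every $u\in U_0\subseteq\overline\Lambda$ satisfies $f(u+v)=u+f(v)$ for all $v$. Next, let $x_1,x_2,\dots,x_k$ be the extreme line directions lying in $U$. By the third bullet in the definition of well-spanned, each $x_i\in U\setminus U_0$ can be written as a linear combination of the other $x_j$ over $U_0$; together with $U_0$ being strongly fixed, this means that each such $x_i$ participates in a minimal linearly dependent set of preserved directions (recall that by Proposition~\ref{p:ext-line=preserved-line} every extreme line direction is a preserved direction, and every nonzero vector of $U_0$ is trivially a preserved direction since $U_0$ is strongly fixed). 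The one subtlety is that a minimal dependence could have size $2$, i.e.\ $x_i$ is (up to scalar, mod $U_0$) a single other $x_j$; but then $x_i$ and $x_j$ are parallel modulo $U_0$, so $\spanof{x_i,x_j,U_0}=\spanof{x_j,U_0}$ and we do not need $x_i$ separately. Otherwise the minimal dependence has size $\ge 3$ (counting the $U_0$-part as one generator if present, or working inside a suitable finite-dimensional slice), and Lemma~\ref{l:min-dep} applies directly to show that the span of those directions — in particular $\spanof{x_i}$ — is strongly fixed.

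Carrying this out for every $i$, and adjoining $U_0$, shows that each of $U_0$ and each $\spanof{x_i}$ is strongly fixed; since the $x_i$ span $U$ over $U_0$ (second bullet of well-spanned), we get
\[
U=U_0+\spanof{x_1}+\spanof{x_2}+\dots+\spanof{x_k},
\]
a finite sum of strongly fixed subspaces, hence strongly fixed by the remark following the definition.

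The main obstacle is the bookkeeping around minimal dependencies of size exactly $2$ and the interaction with $U_0$: Lemma~\ref{l:min-dep} requires $k\ge 3$, so one must be careful to treat $U_0$ as furnishing extra preserved directions (every element of $U_0$ is one) so that a two-term relation $\lambda x_i+\mu x_j\in U_0$ gets promoted to a genuine dependence $\lambda x_i+\mu x_j+ u_0=0$ with $u_0\in U_0$, padding the list to length $\ge 3$ when $u_0\ne 0$, or else collapsing to the parallel case handled above when $u_0=0$. Once that case analysis is set up cleanly, each application of Lemma~\ref{l:min-dep} is immediate and the conclusion follows.
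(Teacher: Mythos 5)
Your proof is correct and follows essentially the same route as the paper: note that $U_0$ is strongly fixed via Corollary~\ref{c:Lambda-preserved}, feed each extreme line direction of $U\setminus U_0$ into a minimal linear relation over $U_0$ so that Lemma~\ref{l:min-dep} (with Proposition~\ref{p:ext-line=preserved-line}) strongly fixes its span, and sum the pieces. The paper's version is terser and simply invokes ``a minimal linear relation over $U_0$,'' whereas you make explicit the bookkeeping that justifies this -- treating nonzero elements of $U_0$ as preserved directions to pad two-term relations up to $k\ge 3$, and disposing of the genuinely two-term (parallel) case -- which is a worthwhile clarification rather than a deviation.
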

\begin{proof}
  We have seen that $f$ strongly fixes $U_0$. Consider any extreme
  line direction $v$ in $U$. Then $v$ occurs in a minimal linear
  relation with other extreme line directions in $U$ over
  $U_0$. Since, by Proposition~\ref{p:ext-line=preserved-line},
  extreme line directions are preserved directions of $f$,
  Lemma~\ref{l:min-dep} shows that $f$ is strongly fixed on the span
  of these directions and, in particular, on $\spanof{v}$. Since this
  is true for every extreme line direction in $U$, and these
  directions span $U$ over $U_0$, we see that $U$ is strongly fixed.
\end{proof}

\section{The complement of the maximal well-spanned subspace}\label{s:complement=well-spanned}
In this section we prove that $V=(U\oplus \spanof{v_1}\oplus
\spanof{v_2}\dots\spanof{v_k})_\infty$ where $U$ is the maximal
well-spanned subspace and $v_1\dots v_k$ are extreme line directions
outside of $U$ and, thus, deduce that $U$ is the non-$\lid$-component
in the $\ell_\infty$-decomposition.

We start by showing that, unless $U=V$, there is an extreme line
direction outside of~$U$. Since we use induction it is convenient to
prove a (stronger) result for a general convex set rather than just
for the unit ball of the normed space.
\begin{lemma}\label{l:exists-preserved}
  Suppose $U$ is a codimension one subspace of $V$, that $v\in
  V\setminus U$, and that $U_i=U+\lambda_iv$, $1\le i\le k$, are
  distinct cosets of $U$ with $\lambda_1<\lambda_2<\dots<\lambda_k$.
  Further, suppose that, for each~$i$, $A_i$ is a (non-empty) compact
  convex subset of $U_i$, and that, for some $s<k$, $x\in A_s$ is an
  extreme point of $A=\conv(\bigcup_i A_i)$.  Then there exists $t>s$
  and $y\in A_t$ such that $[x,y]$ is an extreme line of~$A$.
\end{lemma}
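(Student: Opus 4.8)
The plan is to find the extreme line of $A$ by pushing a supporting hyperplane. Since $x$ is an extreme point of the compact convex set $A$, there is a (nonzero) linear functional $\phi$ on $V$ that attains its maximum over $A$ uniquely at $x$. The idea is to perturb $\phi$ by a small amount in the direction ``$+v$'' so that the new functional no longer has a unique maximiser, and the face it picks out is a segment joining $x$ to a point in a higher-indexed slab $A_t$. Concretely, I would consider $\psi_\epsilon = \phi + \epsilon \ell$, where $\ell$ is a linear functional with $\ell(v) = 1$ and $\ell \equiv 0$ on $U$ (so $\ell$ measures the coset index), and look at the maximiser of $\psi_\epsilon$ over $A$ as $\epsilon \downarrow 0$.

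The key steps, in order: (1) Record that $\phi$ attains its max over $A$ uniquely at $x \in A_s$; let $c = \phi(x)$. (2) For small $\epsilon > 0$, let $M_\epsilon = \max_{a \in A} \psi_\epsilon(a)$ and let $F_\epsilon \subseteq A$ be the (compact, convex, nonempty) face where this max is attained. As $\epsilon \to 0$, any accumulation point of $F_\epsilon$ must lie in $\{ \phi = c\} \cap A = \{x\}$, so $F_\epsilon \to \{x\}$; in particular for $\epsilon$ small every point of $F_\epsilon$ has coset index $\geq \lambda_s$ (since $\phi$ is near $c$ only near $x$, whose index is $\lambda_s$), and if $F_\epsilon = \{x\}$ for a sequence $\epsilon \to 0$ we would need $x$ to maximise $\ell$ subject to $\phi = c$ — but $\{\phi = c\} \cap A = \{x\}$ makes that automatic, so this does not immediately give a contradiction; instead the point is that $F_\epsilon$ cannot always be a single point unless $x$ is an extreme point with a one-sided degeneracy. (3) The cleaner route: among the extreme points of $A$ other than those in slabs of index $\leq \lambda_s$, pick $y$ with $\ell(y) = \lambda_t$ maximal subject to $[x,y]$ being an edge (1-dimensional face) of $A$; such an edge exists because $A$ is a polytope-like compact convex body and $x$ is a vertex not of maximal index (here I would use that each $A_i$ is compact convex, $A$ is their convex hull, and $x$ lies strictly below the top slab, so some edge of $A$ at $x$ must go ``upward'' in the $\ell$-direction). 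Then $[x,y]$ is an extreme line of $A$ by definition of a face, and $y \in A_t$ with $t > s$ because $\ell(y) > \ell(x) = \lambda_s$.

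Because $A$ need not be a polytope, I would instead argue directly via supporting functionals: since $x$ is extreme and $x$ is not in the top coset, there is a supporting functional of the form $\phi + \epsilon \ell$ (for suitable small $\epsilon > 0$) whose maximiser set $F$ over $A$ is contained in a thin slab near $x$ but is \emph{not} the singleton $\{x\}$, provided we choose $\epsilon$ to be exactly the threshold at which the maximiser jumps off $x$ — formally, set $\epsilon^* = \sup\{ \epsilon \geq 0 : x \text{ maximises } \phi + \epsilon\ell \text{ over } A\}$; one checks $0 \le \epsilon^* < \infty$ (finiteness because points of higher index exist and $\phi$ is bounded), that $x$ still maximises $\phi + \epsilon^* \ell$ by closedness, and that the face $F$ at $\epsilon^*$ is strictly larger than $\{x\}$ (otherwise a small further increase of $\epsilon$ would keep $x$ optimal, contradicting the sup). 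Then $F$ is a face of $A$ containing $x$ and at least one other point $y$; I would use that $F$ lies in an affine subspace and that, by minimality/choosing $y$ an extreme point of $F$ adjacent to $x$, the segment $[x,y]$ is a face of $A$, hence an extreme line; finally $\ell(y) \geq \ell(x)$ because $y \in F$ and $\ell(y) > \ell(x)$ (so $y \in A_t$, $t > s$) follows since if $\ell(y) = \ell(x) = \lambda_s$ then $F$ would be contained in the single slab $A_s$, but then $\phi$ alone (which equals $\phi + \epsilon^*\ell - \epsilon^*\ell$, constant on that slab up to the $\ell$ term) would not have had $x$ as its unique maximiser — contradiction.

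The main obstacle is the last point: ensuring the face $F$ produced at the threshold $\epsilon^*$ genuinely reaches a \emph{strictly higher} coset rather than spreading out within the slab $A_s$. This is where the hypothesis ``$x$ is the unique $\phi$-maximiser'' together with ``$\ell$ is constant on each slab'' must be combined carefully: uniqueness of the $\phi$-maximiser rules out $F \subseteq A_s \setminus \{x\}$, and then since $F$ is connected and contains a point outside $A_s$ with $\ell$-value $\geq \lambda_s$, that point's value must exceed $\lambda_s$, placing it in some $A_t$ with $t > s$. Extracting a genuine edge $[x,y]$ (a $1$-dimensional extreme face) from the possibly higher-dimensional face $F$ is then a standard fact about faces of compact convex sets — a minimal face containing $x$ and meeting the top $\ell$-level of $F$, or equivalently recursing the same argument inside $F$ — and I would cite or include a one-line version of this.
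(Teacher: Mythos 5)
Your core idea---tilt a supporting functional at $x$ by the ``coset height'' functional $\ell$ up to the threshold at which the supporting hyperplane first catches a higher slab---is exactly the mechanism of the paper's proof (there the threshold is computed explicitly as $\beta=\min_{i>s}\min_{y\in A_i}(h_0(x)-h_0(y))/(\lambda_i-\lambda_s)$, which coincides with your $\epsilon^*$). But there are two genuine gaps. First, your step (1) is false: an extreme point of a compact convex set need not be exposed, i.e.\ there need not exist a linear functional attaining its maximum over $A$ \emph{uniquely} at $x$. (Standard example: the convex hull of a closed disc and an external point $p$; the two points where the tangent segments from $p$ touch the circle are extreme but not exposed.) You then use this uniqueness essentially, namely to rule out the face $F$ at $\epsilon^*$ spreading out inside the slab $A_s$. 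The fix is to drop uniqueness entirely: take any supporting functional $h_0$ at $x$, and observe that because each $A_i$ is compact and there are finitely many of them, the infimum defining $\epsilon^*$ is attained, so the face $F=F_{\epsilon^*}$ automatically contains a point $y_0\in A_j$ with $j>s$---no uniqueness needed. This is what the paper does.

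Second, the passage from the (possibly high-dimensional) face $F$ to an actual extreme line $[x,y]$ is not ``a standard fact'' that can be cited in one line; it is essentially the lemma itself applied to $F$, and your exposing-functional version of the recursion would again require $x$ to be exposed in $F$, which can again fail. The paper makes this step rigorous by induction on $\dim V$: it checks that $F=A\cap H_{\epsilon^*}=\conv\bigl(\bigcup_i(A_i\cap H_{\epsilon^*})\bigr)$ (this uses $\bigcup_iA_i\subseteq H_{\epsilon^*}^-$), that the slices $A_i\cap H_{\epsilon^*}$ are compact convex subsets of distinct cosets of $U\cap H_{\epsilon^*}$, which has codimension one in $U$ so the ambient dimension drops (here one needs that $H_{\epsilon^*}$ is not a coset of $U$, which holds because $F$ meets two different slabs), that $x$ is still an extreme point of $F$, and that some higher slice is non-empty, and finally that an extreme line of $F$ is an extreme line of $A$. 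With those verifications your outline becomes the paper's proof; without them, and with the exposedness assumption, the argument as written does not go through.
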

\begin{proof}
  We prove this by induction on the dimension of $V$. If $\dim V=1$ it
  is trivial: $V=\R$ and each $A_i$ is a single point. Since $s<k$ and
  $x$ is extreme point we must have $x\in A_1$ so join it to the point
  in $A_k$. Thus suppose that the result holds for all spaces of
  dimension less than $\dim V$.

  Let $H_0$ be a codimension one tangent hyperplane at $x$ to $A$ and
  let $h_0$ be a corresponding linear functional; i.e., $H_0=\{y\in
  V:h_0(y)=h_0(x)\}$. We may assume that $h_0(y)\le h_0(x)$ for all
  $y\in \bigcup_i A_i$.

  Let $q$ be the linear functional on $V$ defined, for any $u\in U$
  and $\lambda$, by $q(u+\lambda v)=\lambda$. By hypothesis
  $q(U_i)=\lambda_i$ is increasing with $i$.  Consider the family of
  hyperplanes $H_\alpha$ through $x$ given by the functionals
  $h_\alpha=h_0+\alpha q$; i.e., $H_\alpha=\{y\in
  V:h_\alpha(y)=h_\alpha(x)\}$. Let $H_\alpha^-=\{y\in
  V:h_\alpha(y)\le h_\alpha(x)\}$.  Note that, $A_i\subseteq H_0^-$ for
  all $i$.

  For each $i > s$, the function $\alpha_i(y) =
  (h_0(x) - h_0(y))/(\lambda_i - \lambda_s)$ is continuous and
  non-negative on the compact set $A_i$ and so attains an absolute
  minimum $\alpha^*_i \geq 0$.  Set $\beta = \min_{i> s}\{\alpha^*_i\}
  \geq 0$.  Then, by the choice of $\beta$, for every $i > s$, and $y
  \in A_i$, we have $h_\beta(y) \leq h_\beta(x)$. Additionally, for
  every $i \le s$, and $y \in A_i$, since $\beta\ge 0$ and
  $\lambda_i\le \lambda_s$, we also have $h_\beta(y) \leq
  h_\beta(x)$. Thus, $\bigcup_{i} A_i\subseteq H^{-}_\beta$, so
  $H_\beta$ is a tangent hyperplane to $A$ at $x$.
  
  Furthermore, since all the minimums $\alpha_i^*$ were attained in
  the choice of $\beta$, there is at least one $j>s$ and $y \in A_j$
  with $h_\beta(y) = h_\beta(x)$ and so $H_\beta \cap (\bigcup_{i > s}
  A_i) \neq \emptyset$.

  Let $H=H_\beta$ and $H^-=H_\beta^-$, and, for each $i$, let
  $A_i'=A_i\cap H$.  Note some of the $A_i$ may be empty and we ignore
  these sets. Let
  \[A'=\conv(\bigcup_i A_i')=\conv (\bigcup_i A_i\cap H)=\conv
  (\bigcup_i A_i)\cap H=A\cap H\]
  where the third equality follows since  $\bigcup_iA_i\subset H^-$.
  Now each $A_i'$ lies in $U_i\cap H$ which are cosets of $U\cap H$
  which is codimension one. Obviously the $A_i'$ are compact convex
  subsets. Also $x\in A_s'$ and, since $A'\subset A$, we see that $x$
  is an extreme point of $A'$. Finally, by our choice of $H$, at least one of
  the $A_{s'}'$ for $s'>s$ is non-empty. Hence the $A_i'$ satisfy the
  induction hypothesis. Thus, there exists $y\in A_t'$ with $t>s$ such
  that $[x,y]$ is an extreme line of $A'$.

  To complete the induction step, and thus the proof, we show that
  $[x,y]$ is extreme line of $A$. Indeed, suppose $z\in[x,y]$ is a
  convex combination of $s,t\in A$. Since $[x,y]\subset A'\subset H$
  and $A\subset H^-$ both $s,t$ must lie in $H$, and thus $s,t\in
  A'$. Since $[x,y]$ is an extreme line in $A'$ this shows that
  $s,t\in [x,y]$, and thus $[x,y]$ is an extreme line of $A$ as
  claimed.
\end{proof}

We use this result to deduce that there are `many'  extreme line directions.

\begin{corollary}\label{c:preserved-span}
  Suppose that $U$ is the maximal well-spanned subspace of $V$. Then
  the extreme line directions outside $U$ span $V$ over $U$.
\end{corollary}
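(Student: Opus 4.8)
The plan is to argue by contradiction, using Lemma~\ref{l:exists-preserved} to manufacture an extreme line whose direction points outside any proper subspace containing all the extreme line directions. Write $W=U+\spanof{E}$, where $E$ is the set of all extreme line directions of the unit ball $B$. Since every extreme line direction already lying in $U$ contributes nothing beyond $U$, we have $U+\spanof{E}=U+\spanof{E\setminus U}$, so it suffices to prove $W=V$. Suppose instead that $W$ is proper. Choose a codimension one subspace $H$ with $W\subseteq H$ and a vector $v\in V\setminus H$, so that $V=H\oplus\spanof{v}$; let $q$ be the linear functional with $q|_H=0$ and $q(v)=1$.

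The reason for insisting $H\supseteq W\supseteq U\supseteq U_0$ is that Corollary~\ref{c:coset-cover} then applies: the extreme points of $B$ lie in finitely many cosets of $U_0$, hence in finitely many cosets $H+\lambda_1 v,\dots,H+\lambda_k v$ of $H$, which I order so that $\lambda_1<\dots<\lambda_k$. Put $A_i=B\cap(H+\lambda_i v)$; each $A_i$ is a non-empty compact convex subset of the coset $H+\lambda_i v$, and since $\bigcup_i A_i\supseteq\ext(B)$ we get $\conv(\bigcup_i A_i)=B$. Moreover $k\ge 2$: if $k=1$ then $\ext(B)\subseteq H+\lambda_1 v$, but also $\ext(B)=-\ext(B)\subseteq H-\lambda_1 v$ (as $B=-B$), which forces $\lambda_1=0$ and hence $\ext(B)\subseteq H$, contradicting the fact that $\ext(B)$ spans $V$.

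Now I would invoke Lemma~\ref{l:exists-preserved} with the codimension one subspace $H$ in the role of its ``$U$'', with the compact convex sets $A_i$ above (so that the lemma's $A=\conv(\bigcup_i A_i)$ equals $B$), and with $s=1$: take $x$ to be any extreme point of $B$ lying in $A_1$ (one exists since $A_1\ne\emptyset$), which is then an extreme point of $A=B$ as required. The lemma produces an index $t>1$ and a point $y\in A_t$ such that $[x,y]$ is an extreme line of $B$. Since $x$ and $y$ lie in different cosets of $H$ we have $x\ne y$, so $x-y$ is an extreme line direction, and $q(x-y)=\lambda_1-\lambda_t\ne 0$, whence $x-y\notin H$ and in particular $x-y\notin W$ — contradicting the definition of $W$ as the span of $U$ together with \emph{all} extreme line directions. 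Hence $W=V$, which is the assertion.

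The geometric work is done entirely by Lemma~\ref{l:exists-preserved}, so I do not expect a genuine obstacle. The only two points that need care are the bookkeeping ones flagged above: that the hypothesis $H\supseteq U_0$ is exactly what lets Corollary~\ref{c:coset-cover} furnish the ``finitely many cosets'' needed to apply Lemma~\ref{l:exists-preserved}, and that the symmetry argument giving $k\ge 2$ is what guarantees a non-top coset $A_1$ from which to start building the extreme line. (Note that well-spannedness of $U$ itself is not needed here; only $U\supseteq U_0$ is used.)
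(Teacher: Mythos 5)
Your proof is correct and follows essentially the same route as the paper: both pass to a proper codimension-one subspace containing the relevant directions, cover $\ext(B)$ by finitely many cosets of it via Corollary~\ref{c:coset-cover}, and invoke Lemma~\ref{l:exists-preserved} to produce an extreme line crossing between cosets, contradicting the choice of that subspace. The only difference is bookkeeping: the paper fixes a single extreme point $x\notin U$ and builds its hyperplane around the directions of extreme lines through that particular $x$ (hence needs the replace-$v$-by-$-v$ step to put $x$ in a non-top coset), whereas you span all extreme line directions at once and start from any extreme point in the bottom coset, which is marginally cleaner.
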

\begin{proof}
  If $U=V$ then the statement is (rather vacuously) true so assume
  $U\not=V$.  Since $\ext(B)$ spans $V$ there is an extreme point
  $x\not\in U$. Let $y_i$, $i\in I$ be the endpoints of the extreme
  lines $[x,y_i]$ which have $x$ as the other endpoint. If $U$
  together with the vectors $x-y_i$ span $V$ then the result holds, so
  suppose they do not.

  Let $U'$ be a codimension one subspace containing $U$ and all the
  vectors $x-y_i$.  Fix $v\in V \setminus U'$ and let
  $U'_1,U_2',\dots,U'_k$ be the cosets of $U'$ covering the extreme
  points of $B$, where $U_i'=U+\lambda_iv$ are such that the
  $\lambda_i$ are increasing. By Corollary~\ref{c:coset-cover}, such a
  $k$ exists and, since $B$ is not contained in any codimension one
  affine hyperplane, $k\ge 2$.

  By replacing $v$ with $-v$ (and thus reversing the order of the
  $U_i'$) if necessary, we may assume $x\in U'_s$ for some $s<k$.  Now
  apply the previous lemma with $U'$, taking the set $A_i$ in $U_i'$
  to be $B\cap U'_i$ for each $i$. Note that, since all the extreme
  points of $B$ are contained in $\bigcup_i A_i$, we have
  $\conv(\bigcup_i A_i)=B$.
  
  This gives an extreme line $[x,y]$ of $B$ with $x-y$ not in $U'$
  contradicting the choice of $U'$.
\end{proof}

\begin{lemma}\label{l:ext-line=l_inf-line}
  Suppose that $v_1$ is an extreme line direction not in the maximal
  well-spanned subspace $U$. Then $v_1$ is an $\ell_\infty$-direction
  and $U$ is a subset of $U_1$ the corresponding subspace.
\end{lemma}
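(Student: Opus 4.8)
The plan is to show directly that $v_1$ (rescaled so that $\|v_1\|=1$) is an $\ell_\infty$-direction whose corresponding subspace contains $U$, by exhibiting that subspace explicitly. Let $v_1,\dots,v_k$ be \emph{all} the extreme line directions lying outside $U$. By Corollary~\ref{c:preserved-span} these span $V$ over $U$ and by Lemma~\ref{l:eld-li} they are linearly independent over $U$, so they form a basis of $V$ over $U$; in particular, setting $U_1:=U+\spanof{v_2,\dots,v_k}$, we get $V=\spanof{v_1}\oplus U_1$ and $U\subseteq U_1$. Unwinding the definition of an $\ell_\infty$-direction, the whole lemma reduces to the single geometric claim that $B$ is a prism over $B\cap U_1$ in the direction $v_1$, i.e.
\[
 B=[-v_1,v_1]+(B\cap U_1).
\]

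The first step — which I expect to be the crux, and where the hypothesis $v_1\notin U$ is really used — is to prove that every extreme point of $B$ lies on an extreme line of $B$ parallel to $v_1$. Let $\psi\colon V\to\R$ be the linear functional reading off the $v_1$-coordinate with respect to $V=\spanof{v_1}\oplus U_1$. Since $U_0\subseteq U\subseteq U_1=\ker\psi$, Corollary~\ref{c:coset-cover} shows $\ext(B)$ meets only finitely many cosets $U_1+\mu_i v_1$, say with $\mu_1<\dots<\mu_m$, and $m\ge2$ as $B$ spans no hyperplane. Apply Lemma~\ref{l:exists-preserved} with codimension-one subspace $U_1$, direction $v_1$, and the compact convex sets $A_i=B\cap(U_1+\mu_iv_1)$, which together contain $\ext(B)$ and hence have convex hull $\conv(\ext(B))=B$: for any extreme point $e$ with $\psi(e)<\mu_m$ it produces an extreme line $[e,e']$ of $B$ with $\psi(e'-e)>0$. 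Now $e'-e$ is an extreme line direction of $B$ with $\psi(e'-e)\ne0$, so it lies outside $U$ (as $U\subseteq\ker\psi$) and is therefore parallel to some $v_j$; but $\psi$ vanishes on $v_2,\dots,v_k$, forcing $j=1$. Thus $[e,e']$ is parallel to $v_1$. The symmetric argument applied to $-v_1$ deals with extreme points having $\psi(e)>\mu_1$, and since $m\ge2$ this covers all extreme points.

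What remains is bookkeeping with convexity. An extreme line $[p,q]$ is automatically a maximal segment of $B$ in its own direction: if $q+\delta\in B$ for a positive multiple $\delta$ of $q-p$, then $q$ is a proper convex combination of $p$ and $q+\delta$, so by extremality $q+\delta\in[p,q]$, which is absurd. Hence, for a $v_1$-edge $[p,q]$ with $\psi(p)<\psi(q)$, we cannot have $\psi(p)>\mu_1$ (a downward $v_1$-edge at $p$ would violate maximality) nor $\psi(q)<\mu_m$; so every $v_1$-edge runs from the coset $U_1+\mu_1v_1$ to $U_1+\mu_mv_1$, and they all have the common length $L:=\mu_m-\mu_1$. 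Writing a bottom endpoint as $\mu_1v_1+u$ with $u\in U_1$, its partner is $\mu_mv_1+u$, and since endpoints of extreme lines are extreme points, $\ext(B)=\{\,\mu_1v_1+u_\ell\,\}\cup\{\,\mu_mv_1+u_\ell\,\}$ for a common finite set $\{u_\ell\}\subseteq U_1$. Therefore
\[
 B=\conv(\ext(B))=[\mu_1v_1,\mu_mv_1]+\conv\{u_\ell\}.
\]
Using $B=-B$ and comparing $v_1$-extents gives $\mu_1=-\mu_m=-L/2$, so $B=[-(L/2)v_1,(L/2)v_1]+C$ with $C:=\conv\{u_\ell\}$; slicing at $\psi=0$ identifies $C$ with $B\cap U_1$. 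Finally $\|v_1\|=1$ forces $\{\,t:tv_1\in B\,\}=[-L/2,L/2]$ to equal $[-1,1]$, so $L=2$ and $B=[-v_1,v_1]+(B\cap U_1)$. Hence $v_1$ is an $\ell_\infty$-direction, and by uniqueness of the corresponding subspace its subspace is $U_1\supseteq U$, as required.
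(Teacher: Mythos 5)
Your proof is correct and follows essentially the same route as the paper: you take the same codimension-one complement $U_1=\spanof{U,v_2,\dots,v_k}$, apply Lemma~\ref{l:exists-preserved} in both directions to force a full-height extreme line in direction $v_1$ through every extreme point, and conclude that $B$ is a prism $[-v_1,v_1]+(B\cap U_1)$, exactly as in the paper's argument with $U'$, the two cosets, and $B_1=B_2$. One cosmetic slip: the set $\{u_\ell\}$ of $U_1$-components of extreme points need not be \emph{finite} (Corollary~\ref{c:coset-cover} only bounds the number of cosets of $U_0$ they occupy), but nothing in your argument uses finiteness, so this does not affect the proof.
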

\begin{proof}
  Let $v_2,\dots v_k$ be the other extreme line directions outside of
  $U$. We may assume that they all, and $v_1$, have norm one.  By
  Lemma~\ref{l:eld-li}, the $v_i$ are linearly independent over $U$,
  and by Corollary~\ref{c:preserved-span} they span over $U$.

  Let $U'$ be the subspace spanned by $U$ and $v_2,v_3,\dots,v_k$.
  Since the $v_i$ are linearly independent and span over $U$, we see
  that $U'$ has codimension one.

  Suppose that $U'_1,U'_2,\dots,U'_t$ are finitely many cosets
  (Corollary~\ref{c:coset-cover}) of $U'$ that cover the extreme
  points of~$B$. Our first step is to show that, from every extreme
  point of $B$, we can either add or subtract a multiple of $v_1$ and
  stay in $B$.

  Write $U'_i=U'+\lambda_i v_1$, and we may assume that the $\lambda_i$
  are increasing. Define $A_1,A_2,\dots,A_t$ by $A_i=B\cap U_i'$. Note
  that $B=\conv(\bigcup_i A_i)$.

  For any extreme point $x$ of $B$ in some $A_i$ with $i<t$,
  Lemma~\ref{l:exists-preserved} shows that there exists $y$ in one of
  the $A_s$ with $s>i$ such that $x-y$ is an extreme line
  direction. Since $v_1$ is the only extreme line direction not in $U'$
  we must have that $x-y$ is in the same direction as $v_1$. Thus,
  $y=x+\lambda v_1$ for some $\lambda$, and since $s>i$ we see
  $\lambda>0$.

  By applying Lemma~\ref{l:exists-preserved} again, this time to the
  $A_i$ in reverse order, we see that any extreme point $x'$ of $B$ in
  any of the $A_i$ with $i>1$ there is also a $y'\in A_s$ for some
  $s<i$ with $x'-y'$ an extreme line direction. Again $x'-y'$ must be
  the same direction as $v_1$; i.e., $y'=x'+\lambda' v_1$. This time,
  since $s<i$ we see that $\lambda'<0$.

  Since the extreme points of $B$ span $V$ and $B$ is symmetric, we
  see that $\ext(B)$ is not a subset of $U'$ or any single coset of
  $U'$, and thus $t\ge 2$. Hence, for any extreme point of $B$, at
  least one of the two cases above applies; i.e., we have shown that
  from any extreme point of $B$ we can either add or subtract a
  multiple of $v_1$ and stay in $B$.

  It now follows that $t=2$; i.e., that the extreme points of $B$ are
  contained in two cosets of $U'$. Indeed, suppose $t\ge 3$. By
  applying the two cases above to any extreme point $x$ in $A_2$ we
  see that $x+\lambda v_1$ and $x+\lambda'v_1$ are both in $B$ for
  some $\lambda>0$ and $\lambda'<0$. But this contradicts $x$ being an
  extreme point of $B$.

  Since $B$ is symmetric we must have $U_1'=U'-\lambda v_1$ and
  $U_2'=U'+ \lambda v_1$ for some $\lambda>0$.  Let $B_1=A_1+ \lambda
  v_1$ and $B_2=A_2- \lambda v_1$ be the projections of $A_1,A_2$ onto
  $U$.

  We claim that $B_1=B_2$. For a contradiction suppose there is a
  point in $B_2\setminus B_1$. Then there must be an extreme point $z$
  of $B_2$ in $B_2\setminus B_1$. Obviously $z'=z+\lambda v_1\in A_2$
  is an extreme point of $B$. However, since $z\not \in B_1$ we see
  that we can not add or subtract any multiple of $v_1$ to $z'$ and
  stay in $B$ which is a contradiction.

  Now since $v_1\in B$ (recall we assumed $\|v_1\|=1$) we see $\lambda\ge
  1$. Also for any $z\in A_1$ the vector $z+2\lambda v_1\in A_2$, so
  $z$ and $z+2\lambda v_1$ are both in $B$; in particular $\lambda\le
  1$. Thus $\lambda=1$.

  Combining this we see that $B=\conv(B_1+v_1,B_1-v_1)$. We use this
  to show that $v_1$ is an $\ell_\infty$-direction.  Given any $v\in
  V$ write $v=\alpha v_1+\beta u_1$ for some $\alpha,\beta\in \R$ and
  $u_1\in U'$ with $\|u_1\|=1$. Observe that the description of $B$
  above shows that $B\cap U'=B_1$. Thus, since $\|u_1\|=1$ we see that
  $u_1\in B$ so $u_1\in B_1$. 
  
  Now
  \[
  \|v\|=\inf\{\lambda: v/\lambda\in B\}=\max(\alpha,\beta)=\max(\alpha,\|\beta u_1\|).
  \]
  Since $U\subseteq U'$ the result follows.
\end{proof}

\begin{lemma}\label{l:l-inf=ext-line}
  Suppose that $x$ is an $\ell_\infty$-direction with corresponding
  subspace~$W$. Then $x$ is an extreme line direction, and the maximal
  well-spanned subspace, $U$, is contained in $W$.
\end{lemma}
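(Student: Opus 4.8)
The plan is to prove the two assertions in turn; the first is short and explicit, while the second is the substantial part and uses the well-spanned machinery. Write $B_W$ for the unit ball $B\cap W$ of $W$. Since $V=(\spanof{x}\oplus W)_\infty$, the ball $B$ consists exactly of the vectors $\alpha x+w$ with $|\alpha|\le 1$ and $w\in B_W$. For the first assertion I would pick an extreme point $w_0$ of the compact convex set $B_W$ (taking $w_0=0$ if $W=\{0\}$) and check that the segment $L=\{\alpha x+w_0:-1\le\alpha\le1\}$, from $w_0-x$ to $w_0+x$, is an extreme line of $B$: every point of $L$ has norm $\max(|\alpha|,1)=1$, and if a point of $L$ is a nontrivial convex combination $\lambda s+(1-\lambda)t$ with $s=\beta_sx+v_s$ and $t=\beta_tx+v_t$ in $B$, then looking at $W$-components gives $w_0=\lambda v_s+(1-\lambda)v_t$ with $v_s,v_t\in B_W$, so $v_s=v_t=w_0$ by extremality and hence $s,t\in L$. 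Since $L$ has direction $2x$, $x$ is an extreme line direction.

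For the second assertion I would first record two facts. The first is that any extreme line direction of $B$ not proportional to $x$ lies in $W$: writing an extreme line as $[a,b]$ with $a=\alpha x+u$ and $b=\beta x+w$, if $u\neq w$ and $\alpha\neq\beta$ then the points $\alpha x+(u+w)/2$ and $\beta x+(u+w)/2$ both lie in $B$ and have the midpoint of $[a,b]$ as their common midpoint, yet they cannot both lie on $[a,b]$ since the coefficient of $x$ is injective along $[a,b]$ — contradicting extremality; hence $\alpha=\beta$ (direction in $W$) or $u=w$ (direction proportional to $x$). The second is that the continuous subspace $U_0$ of $\overline\Lambda$ is contained in $W$: each extreme point of $B$ has $\spanof{x}$-component $\pm x$ (otherwise it is the average of two nearby points of $B$), so the projection $\pi\colon V\to\spanof{x}$ along $W$ sends $\ext(B)$ into $\{x,-x\}$, hence $\Lambda$ into $\Z x$ and, by continuity, $\overline\Lambda$ into $\Z x$; as $U_0\subseteq\overline\Lambda$ is a subspace, $\pi(U_0)$ is a subspace of the line $\spanof{x}$ lying inside $\Z x$, so $\pi(U_0)=\{0\}$ and $U_0\subseteq W$.

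To conclude I would invoke the definition of well-spanned: the extreme line directions lying in $U$ span $U$ over $U_0$, so it is enough to show each of them lies in $W$. If one of them, say $x_1$, were proportional to $x$, then $x\in U$, and $x\notin U_0$ since $U_0\subseteq W$ and $x\notin W$; so $x_1\in U\setminus U_0$, and by the third clause of well-spannedness $x_1$ is a linear combination over $U_0$ of the other extreme line directions in $U$, each of which is non-proportional to $x$ and hence lies in $W$ by the first fact — so, using $U_0\subseteq W$, we would get $x\in W$, a contradiction. Therefore every extreme line direction in $U$ lies in $W$, and with $U_0\subseteq W$ this yields $U\subseteq W$.

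The main obstacle is this last assertion, and the point needing most care is the final step, where three conventions must be juggled together: that an extreme line direction is only specified up to a nonzero scalar, so that among the extreme line directions lying in $U$ at most one can be proportional to $x$; that well-spannedness lets one re-express an extreme line direction of $U\setminus U_0$ through the others modulo $U_0$; and that $U_0$ already lies in $W$. The two facts above, and the degenerate case $W=\{0\}$ (where $V$ is a line, $B$ is itself the extreme line, and $U=\{0\}$), are routine by comparison.
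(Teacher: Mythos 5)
Your proof is correct and follows essentially the same route as the paper: both describe $B$ via the decomposition $\|\alpha x+w\|=\max(|\alpha|,\|w\|)$, obtain the extreme line $[w_0-x,w_0+x]$ from an extreme point of $B\cap W$, show that $U_0\subseteq W$ and that every extreme line direction other than $x$ lies in $W$, and then conclude via the definition of well-spannedness. The only differences are cosmetic (your midpoint argument for the direction claim, and the projection argument for $U_0\subseteq W$, versus the paper's slightly different phrasings of the same steps).
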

\begin{proof}
  Let $B_{W}=B\cap W$ be the unit ball in $W$. We claim that
  $B=\conv(B_{W}+x,B_{W}-x)$. Suppose $v\in V$. Then, since $x$ is an
  $\ell_\infty$-direction, we can write $v=w+\lambda x$, and we have
  $\|v\|=\|w+\lambda x\|=\max(\|w\|,|\lambda|)$. This implies that, if
  $\|v\|\le 1$, then $v$ is a convex combination of $w+x$ and $w-x$,
  for some $w\in B_W$, i.e. $B\subseteq \conv (B_{W}+x,B_{W}-x)$; and,
  conversely, it implies that $B_{W}+x\subset B$ and $B_{W}-x\subset
  B$, so $\conv (B_{W}+x,B_{W}-x)\subseteq B$. This completes the
  proof of the claim.

  It is immediate that $x$ is an extreme line direction: indeed, for
  any $w\in \ext (B_W)$, $[w-x,w+x]$ is an extreme line.

  We also see that $\ext(B)\subset (W+x)\cup (W-x)$ so, in particular,
  the continuous subspace $U_0$ is contained in $W$. Moreover, the
  only extreme line direction outside $W$ is $x$. Indeed, suppose
  $[y_1,y_2]$ is an extreme line. If $y_1,y_2$ are both contained in
  $W+x$ or both in $W-x$ then $y_2-y_1\in W$. Thus assume $y_1\in
  W-x$ and $y_2\in W+x$. Write $y_1=z_1-x$ and $y_2=z_2+x$, so
  $z_1,z_2\in B_{W}$.  The point $\frac12(z_1+z_2)\in [y_1,y_2]$ is a
  convex combination of $z_1+x,z_2-x$. Since $[y_1,y_2]$ is an extreme
  line this shows that $z_1+x,z_2-x\in [y_1,y_2]$, and thus $z_1=z_2$
  and $y_2-y_1=2x$; i.e., the extreme line $[y_1,y_2]$ has direction
  $x$.

  In particular, this shows that $x$ is not a linear combination of
  other extreme line directions over $U_0$: i.e., $x$ is not in any
  well-spanned subspace. Moreover, since all other extreme line
  directions, and the continuous subspace, lie in $W$ we see that the
  maximal well-spanned subspace is contained in $W$.
\end{proof}
Finally, we show that the well spanned subspace is actually the
non-$\lid$-component in the $\ell_\infty$-decomposition.
\begin{proposition}\label{p:well-spanned=non-l-inf}
  Suppose that $V$ is a normed space with $\ell_\infty$-decomposition
  $(W\oplus \ell_\infty^d)_\infty$, and that $U$ is the maximal
  well-spanned subspace. Then $U=W$.
\end{proposition}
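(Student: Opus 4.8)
The plan is to establish the two inclusions $U\subseteq W$ and $W\subseteq U$ separately; the real geometric content needed for both has already been extracted in Lemmas~\ref{l:ext-line=l_inf-line} and~\ref{l:l-inf=ext-line} and in Corollary~\ref{c:preserved-span}, so what remains is essentially bookkeeping. To set up notation, let $v_1,v_2,\dots,v_d$ be all the $\ell_\infty$-directions of $V$ and $U_1,U_2,\dots,U_d$ their corresponding subspaces, so that by Proposition~\ref{p:l-inf-decomposition} the $\ell_\infty$-component $\ell_\infty^d$ equals $\spanof{v_1,\dots,v_d}$ and the non-$\ell_\infty$-component $W$ equals $\bigcap_{i=1}^d U_i$.

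For the inclusion $U\subseteq W$, I would apply Lemma~\ref{l:l-inf=ext-line} with $x=v_i$ for each $i$: it tells us that the maximal well-spanned subspace $U$ is contained in $U_i$. Intersecting over $i$ yields $U\subseteq\bigcap_i U_i=W$; this is vacuous if $d=0$, since then $W=V$. For the reverse inclusion $W\subseteq U$, I would start from Corollary~\ref{c:preserved-span}, which says the extreme line directions lying outside $U$ span $V$ over $U$. By Lemma~\ref{l:ext-line=l_inf-line} every such direction is an $\ell_\infty$-direction, hence a scalar multiple of some $v_i$ and in particular a vector of $\ell_\infty^d$. Thus $V=U+\ell_\infty^d$. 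Now for any $w\in W$, write $w=u+x$ with $u\in U$ and $x\in\ell_\infty^d$; since $u\in U\subseteq W$ by the first part, we get $x=w-u\in W\cap\ell_\infty^d=\{0\}$, so $w=u\in U$. Hence $W\subseteq U$, and combining with the first inclusion gives $U=W$. In the degenerate case $d=0$ the same chain of reasoning shows $V=U$ directly, since then there are no extreme line directions outside $U$ and so $U$ already spans $V$.

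I do not anticipate a genuine obstacle here: all the difficulty has been front-loaded into the earlier lemmas. The only two points requiring care are essentially notational or edge-case. First, the letter $W$ in this proposition denotes the non-$\ell_\infty$-component, whereas in Lemma~\ref{l:l-inf=ext-line} the same letter denoted the subspace corresponding to a single $\ell_\infty$-direction, so one must translate carefully between the two usages when invoking that lemma. Second, the case $d=0$ (equivalently $W=V$) should be checked separately, as the parenthetical remarks above indicate; the argument still goes through, but the inclusions become degenerate and it is worth spelling that out so the proof is complete.
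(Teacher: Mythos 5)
Your proof is correct and follows essentially the same route as the paper's: both obtain $U\subseteq W$ from Lemma~\ref{l:l-inf=ext-line} applied to each $\ell_\infty$-direction, and both get the reverse containment from Corollary~\ref{c:preserved-span} together with Lemma~\ref{l:ext-line=l_inf-line}. The only cosmetic difference is that the paper closes with a directness/dimension argument ($U\subseteq W$, $U'\subseteq W'$, $U+U'=V=W\oplus W'$ forces equality) where you argue element-wise via $W\cap\ell_\infty^d=\{0\}$; both are fine.
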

\begin{proof}
  Let $u_1,u_2,\dots,u_k$ be the extreme line directions outside $U$,
  and let $U'=\spanof{u_1,u_2,\dots,u_k}$. Let $w_1,w_2,\dots,w_d$ be
  all the $\ell_\infty$-directions, with corresponding subspaces
  $W_i$, and let $W'=\spanof{w_1,w_2,\dots, w_d}$ (i.e., $W'$ is
  the $\lid$-component in the $\ell_\infty$-decomposition).

  By Lemma~\ref{l:ext-line=l_inf-line}, each $u_i$ is an
  $\ell_\infty$-direction, so $U'\subseteq W'$.  Also, by
  Lemma~\ref{l:l-inf=ext-line}, $U\subset W_i$ for each $i$, so
  $U\subset \bigcap_{i=1}^d W_i=W$
  (Proposition~\ref{p:l-inf-decomposition}).  Since the sum $V=W\oplus
  W'$ is direct we must have $U=W$ (and $U'=W'$).
\end{proof}

\section{Proof of Theorem~\ref{t:step-isometry}}\label{s:step-isom}
Finally we are in a position to prove
Theorem~\ref{t:step-isometry}. We prove it first for the case when $f$
fixes $\Lambda$ pointwise.
\begin{lemma}\label{l:step-isometry-fixed-lambda}
  Suppose that $V$ is a normed space with $\ell_\infty$-decomposition
  $V=(U\oplus \ell_\infty^d)_\infty$ and that $f$ is a step-isometry
  fixing $\Lambda$ pointwise. Then $f$ factorises over the
  decomposition as $f_U\oplus f_{\ell_\infty^d}$ where $f_U$ is the
  identity on $U$ and $f_\lid$ is a step-isometry on
    $\ell_\infty^d$.
\end{lemma}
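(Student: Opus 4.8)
The plan is to exploit the two structural facts established in Sections~\ref{s:extreme-lines}--\ref{s:complement=well-spanned}: that any step-isometry $f$ fixing $\Lambda$ pointwise strongly fixes the maximal well-spanned subspace (Corollary~\ref{c:well-spanned-fixed}), and that this well-spanned subspace is precisely the non-$\ell_\infty$-component $U$ (Proposition~\ref{p:well-spanned=non-l-inf}). Combining these, $U$ is strongly fixed by $f$; that is, $f(u+v)=u+f(v)$ for all $u\in U$ and $v\in V$. So the first step is simply to invoke these results to conclude that $U$ is strongly fixed, which immediately gives the candidate $f_U=\mathrm{id}_U$.

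Next I would show that $f$ maps $\ell_\infty^d$ into itself, so that the factorisation makes sense. Take $w\in\ell_\infty^d$ and write $f(w)=u'+w'$ with $u'\in U$ and $w'\in\ell_\infty^d$; the goal is to show $u'=0$. Here I would use that $f$, being a step-isometry fixing $\Lambda$, is an isometry on $\Lambda$ and in fact fixes all of $\Lambda$ — in particular it fixes the lattice points in the $\ell_\infty^d$-direction, e.g.\ the integer combinations of the standard basis vectors $e_1,\dots,e_d$ of $\ell_\infty^d$ (each $e_i$ is an $\ell_\infty$-direction, hence an extreme line direction by Lemma~\ref{l:l-inf=ext-line}, hence in $\Lambda$). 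Now for $w\in\ell_\infty^d$ and a large integer $N$, the point $w$ lies at bounded distance (namely $\|w\|<$ roughly $\max_i|w_i|$, but in any case bounded) from the lattice point $m=\sum_i\lfloor w_i\rfloor e_i\in\Lambda$, which $f$ fixes. Since $f$ is a step-isometry and $\|w-m\|$ is bounded, $\|f(w)-m\|$ is bounded. But then, using strong fixing of $U$, $f(Nw)=f((Nw - \text{its }U\text{-part}) + \text{its }U\text{-part})$ — here I would instead argue via the scaled norm: $f$ is a step-isometry in $\tfrac1N\|\cdot\|$ too, $Nw$ is at bounded $\tfrac1N$-distance from $Nm\in\Lambda$ (which $f$ fixes), so $f(Nw)$ is at bounded distance from $Nm$, forcing the $U$-component of $f(Nw)$ to be bounded uniformly in $N$. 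Combined with $f(Nw)=f(Nw)$ and a homogeneity argument (if the $U$-component of $f(w)$ were a nonzero $u'$, one expects it to grow linearly), this forces $u'=0$. The cleanest route is: since $U$ is strongly fixed, $f(u+w)=u+f(w)$, so $f$ descends to a well-defined map $\bar f$ on the quotient $V/U\cong\ell_\infty^d$; showing $f(\ell_\infty^d)\subseteq\ell_\infty^d$ is then the statement that this descended map, applied to the image of $\ell_\infty^d$, stays in $\ell_\infty^d$, and one checks the $U$-component of $f(w)$ vanishes using the distance-to-$\Lambda$ bound above.

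Once $f(\ell_\infty^d)\subseteq\ell_\infty^d$ is established, the factorisation $f=f_U\oplus f_{\ell_\infty^d}$ with $f_U=\mathrm{id}_U$ follows: for $u\in U$, $w\in\ell_\infty^d$, strong fixing gives $f(u+w)=u+f(w)=f_U(u)+f_{\ell_\infty^d}(w)$ where $f_{\ell_\infty^d}:=f|_{\ell_\infty^d}$. It remains to check $f_{\ell_\infty^d}$ is a step-isometry of $\ell_\infty^d$. Bijectivity: $f$ is bijective (a step-isometry of $V$ is, by Corollary~\ref{c:closed-balls} and the inverse being a step-isometry), $f$ fixes $U$ setwise as a strongly fixed subspace and maps $\ell_\infty^d$ into itself, and by finite-dimensionality an injective linear-complement-respecting map is onto $\ell_\infty^d$. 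The step-isometry property is inherited directly: for $w,w'\in\ell_\infty^d$, $\lfloor\|w-w'\|\rfloor=\lfloor\|f(w)-f(w')\|\rfloor=\lfloor\|f_{\ell_\infty^d}(w)-f_{\ell_\infty^d}(w')\|\rfloor$ since $f$ is a step-isometry on $V$.

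The main obstacle I anticipate is the second step — proving $f(\ell_\infty^d)\subseteq\ell_\infty^d$, i.e.\ that $f$ does not mix the two summands by pushing $\ell_\infty^d$-vectors partly into $U$. The essential tool is the boundedness argument via proximity to the fixed lattice $\Lambda$ together with the homogeneity obtained from the scaled-norm trick; once one sees that the $U$-component of $f(Nw)$ stays bounded while $N\to\infty$, linearity-type homogeneity of this component forces it to be zero. Everything else — invoking strong fixing, inheriting the step-isometry property, bijectivity — is essentially bookkeeping given the earlier sections.
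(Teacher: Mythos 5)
Your overall frame matches the paper's: invoke Proposition~\ref{p:well-spanned=non-l-inf} and Corollary~\ref{c:well-spanned-fixed} to get that $U$ is strongly fixed (hence $f_U=\mathrm{id}_U$), then show $f$ maps $\ell_\infty^d$ into itself, and finally observe that the restriction inherits bijectivity and the step-isometry property. The first and last steps are fine.

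The gap is in the middle step, which you yourself flag as the main obstacle. Your proximity-to-$\Lambda$ argument correctly shows that $\|f(w)-w\|$ is bounded by an absolute constant (indeed $\dim V/2+1$ via Lemma~\ref{l:lambda-cover}), hence that the $U$-component of $f(w)$ is uniformly bounded over all $w\in\ell_\infty^d$ --- but bounded is not zero, and the ``homogeneity'' you need to upgrade one to the other is not available. Nothing proved so far gives $f(Nw)=Nf(w)$, or that the $U$-component of $f(Nw)$ is $N$ times that of $f(w)$, for a \emph{general} $w\in\ell_\infty^d$: homogeneity is established only for extreme points (Lemma~\ref{l:multiple-extreme}), lattice points (Corollary~\ref{c:Lambda-preserved}), and, inside Lemma~\ref{l:min-dep}, for preserved directions lying in a minimal linear dependence --- and in that last case the proof of homogeneity is itself the hard part, done by an induction exploiting the dependence, after which the boundedness trick is applied. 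So replaying the scaled-norm/boundedness computation for $Nw$ yields nothing beyond what you already had at $N=1$, and the quotient-map reformulation at the end of your second step is circular: the descended map on $V/U$ is automatically well defined by strong fixing, but says nothing about whether the $U$-component of $f(w)$ vanishes. The paper closes this step quite differently and more cheaply: each basis vector $v_i$ of the $\ell_\infty^d$-component is an $\ell_\infty$-direction, hence an extreme line direction by Lemma~\ref{l:l-inf=ext-line}, hence a \emph{preserved direction} by Proposition~\ref{p:ext-line=preserved-line}; writing $v=\sum_i\lambda_iv_i$ and moving from $0$ to $v$ one coordinate at a time, each increment of $f$ is parallel to some $v_i$, so $f(v)\in\spanof{v_1,\dots,v_d}=\ell_\infty^d$. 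You should replace your boundedness/homogeneity argument with this preserved-direction walk (or supply an actual proof of the homogeneity you invoke, which would essentially reprove that machinery).
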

\begin{proof}
  Let $f_U$ be the identity on $U$ and define
  $f_{\ell_\infty^d}=f|_\lid$. We show that this is a factorisation of
  $f$ over the decomposition $U\oplus\lid$. By
  Proposition~\ref{p:well-spanned=non-l-inf}, $U$ is the maximal
  well-spanned subspace so, by Corollary~\ref{c:well-spanned-fixed},
  it is strongly fixed by $f$. Thus $f=f_U\oplus f_\lid$. Obviously
  $f_U$ maps $U$ to itself, so it remains to show that $f_\lid$ maps
  $\lid$ to itself.

  Let $v_1,v_2,\dots, v_d$ be the $\ell_\infty$-directions (i.e., the
  natural basis of the $\lid$-component). By
  Lemma~\ref{l:l-inf=ext-line}, each $v_i$ is an extreme line
  direction so, by Proposition~\ref{p:ext-line=preserved-line} a
  preserved direction. Suppose that
  $v=\sum_{i=1}^d\lambda_iv_i$. Then, inductively using the fact that
  each $v_i$ is a preserved direction, we have
  \[
  f_\lid(v)=f(v)=f(\sum_{i=1}^d\lambda_iv_i)=\sum_{i=1}^d\lambda_i'v_i
  \]
  for some $\lambda_i'$; i.e., $f_\lid$ does map $\lid$ to itself.

  It is easy to see that the factors in any factorisation of a
  bijection are also bijections. Thus, since $f_{\ell_\infty^d}$ is just
  the restriction of $f$ to $\lid$, we see that $f_\lid$ is a
  step-isometry as claimed.
  \end{proof}

\begin{proof}[Proof of Theorem~\ref{t:step-isometry}]
  We have that $f$ is any step-isometry on $V$. Define $\hat
  f=f-f(0)$. Then $\hat f$ is a step-isometry that fixes zero. By
  Corollary~\ref{c:isometry}, there is a linear isometry $Q$ of $V$
  such that $Q\circ \hat f$ is a step-isometry fixing $\Lambda$. Let
  $g=Q\circ \hat f$.

  By Lemma~\ref{l:step-isometry-fixed-lambda}, $g$ factorises over the
  $\ell_\infty$-decomposition $V=(U\oplus\ell_\infty^d)_\infty$ as $g_U\oplus
  g_\lid$ where $g_U$ is the identity on $U$, and $g_\lid$ is a
  step-isometry on $\lid$.

  Obviously $Q^{-1}$ is a linear isometry of $V$, so, by
  Corollary~\ref{c:isom-decomp}, it factorises as $q_U\oplus q_\lid$
  over $U\oplus \lid$ and is a isometry on each part.  Note that,
  $q_u$ and $q_\lid$ are both bijective (either immediate from
  linearity, or from Corollary~\ref{c:fin-dim-mazur-ulam}).

  Define $f_U=q_U\circ g_U$ and $f_\lid=q_\lid\circ g_\lid$. By
  definition $f_U$ maps $U$ to itself isometrically, and $f_\lid$ maps
  $\lid$ to itself as a step-isometry.
  Furthermore, 
  \begin{align*}
    f(u+w)&=Q^{-1}(g(u+w))=Q^{-1}(g_U(u)+g_\lid(w))\\
    &=q_u(g_U(u))+q_\lid(g_\lid(w))\\
    &=f_u(u)+f_\lid(w),
  \end{align*}
  i.e., $f=f_U\oplus f_\lid$ is a factorisation of $f$ over $V=U\oplus
  \lid$. This completes the proof.
\end{proof}

\section{Proof of Theorem~\ref{t:l_infinity}}\label{s:l-inf-isom}
In this section we use the results we have proved to deduce
Theorem~\ref{t:l_infinity}.  We prove it first for the case $d=1$:
i.e., $V=\R$.
\begin{lemma}\label{l:step-isom-R}
  Suppose $f$ is a step isometry of $\R$. Then there exists
  an isometry $Q$ of $\R$ and a continuous increasing bijection
  $g\colon[0,1)\to[0,1)$ such that
\[Q\circ f(x)=\lfloor x\rfloor +g(x-\lfloor x\rfloor).\]
\end{lemma}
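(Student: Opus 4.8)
The statement is the one-dimensional case $V=\R$ of Theorem~\ref{t:l_infinity}, and the whole machinery built up in Sections~\ref{s:extreme}--\ref{s:complement=well-spanned} is designed to feed into exactly this. The plan is to reduce first to a step-isometry that fixes $\Lambda$, then analyse directly what a $\Lambda$-fixing step-isometry of $\R$ must look like.

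First I would normalise. Given the step-isometry $f$ of $\R$, set $\hat f(x)=f(x)-f(0)$, which is a step-isometry fixing $0$. In $\R$ the unit ball is $[-1,1]$, its extreme points are $\pm1$, and the lattice $\Lambda$ generated by them is exactly $\Z$. By Corollary~\ref{c:extreme-to-extreme}, $\hat f$ maps $\{\pm1\}$ to itself; by Corollary~\ref{c:neg-of-ext}, $\hat f(-1)=-\hat f(1)$, so either $\hat f(1)=1$ or $\hat f(1)=-1$. In the first case $\hat f$ already fixes $\Z$ pointwise (by Corollary~\ref{c:Lambda-preserved}); in the second, compose with the reflection $x\mapsto -x$ so that it does. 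More cleanly, Corollary~\ref{c:isometry} gives directly an isometry $Q$ of $\R$ (necessarily $x\mapsto \pm x+c$, and here $c=0$) with $Q\circ\hat f$ fixing $\Z$ pointwise. Absorbing $Q$ and the translation into a single isometry of $\R$, it suffices to prove: if $g$ is a step-isometry of $\R$ fixing $\Z$ pointwise, then $g(x)=\lfloor x\rfloor + g_0(x-\lfloor x\rfloor)$ for some continuous increasing bijection $g_0\colon[0,1)\to[0,1)$.

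Now I would pin down such a $g$. Since $g$ fixes every integer, Corollary~\ref{c:closed-balls} (applied with the integer radius $1$) gives $g(B(n,1))=B(n,1)=[n-1,n+1]$ for each $n\in\Z$; intersecting consecutive such balls, $g$ maps $[n,n+1]$ onto itself, and maps the open interval $(n,n+1)$ onto itself (and hence fixes each integer's "fractional companions"). By Corollary~\ref{c:bijective-V}, $g$ is continuous; being a continuous bijection of $[n,n+1]$ onto itself fixing both endpoints, it is strictly increasing there. So $g$ is a strictly increasing continuous bijection of $\R$ fixing $\Z$, and writing $g_0$ for the restriction of $x\mapsto g(x)$ to $[0,1)$ translated to $[0,1)$ we get $g(x)=\lfloor x\rfloor + g_0(x-\lfloor x\rfloor)$ with $g_0\colon[0,1)\to[0,1)$ a continuous increasing bijection. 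One should double-check consistency of the step-isometry condition across intervals — i.e. that this form genuinely is a step-isometry — but that is the routine direction noted already in the introduction's $\R$ example.

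**Main obstacle.** There is no deep obstacle in dimension one; the content was front-loaded into Corollaries~\ref{c:extreme-to-extreme}, \ref{c:isometry} and \ref{c:closed-balls}. The one point needing a little care is the bookkeeping when $\hat f(1)=-1$: one must check that the reflection $x\mapsto -x$ composed with $\hat f$ genuinely fixes $\Z$ pointwise and not merely setwise, which follows because $\hat f(n)=n\hat f(1)=-n$ for $n\in\Z$ by Lemma~\ref{l:multiple-extreme} applied to the extreme point $1$ (and its negative via Corollary~\ref{c:neg-of-ext}), so the reflection sends $-n$ back to $n$. The only other thing to be careful about is that $Q$ in Corollary~\ref{c:isometry} is linear, hence here literally $\pm\mathrm{id}$, so the translation I need to allow in the statement is exactly the one coming from $f(0)$; tracking that $Q\circ f(x)$ rather than $Q\circ\hat f(x)$ has the claimed form is then a matter of noting $Q$ is linear so $Q\circ f = Q\circ\hat f + Q(f(0))$ and folding $Q(f(0))$ into the statement's implicit freedom, or simply choosing $Q$ to already incorporate the needed shift.
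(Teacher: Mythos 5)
Your overall route is the same as the paper's: use Corollary~\ref{c:isometry} to reduce to a step-isometry fixing $\Z$ pointwise, then read off $g$ from the restriction to $[0,1)$. Your monotonicity argument (a continuous bijection of $[n,n+1]$ onto itself fixing both endpoints is strictly increasing) differs from the paper's, which instead picks $z\in(1+x,1+y)$ and applies the step-isometry condition to the pairs $(z,x)$ and $(z,y)$; both are valid, and yours is arguably cleaner given that continuity is already available from Corollary~\ref{c:bijective-V}.

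There is, however, one genuine gap at the final step. From the properties you establish --- $g$ fixes $\Z$, maps each interval $(n,n+1)$ onto itself, and is continuous and strictly increasing --- you cannot conclude that $g(x)=\lfloor x\rfloor+g_0(x-\lfloor x\rfloor)$ for a \emph{single} function $g_0$: those properties permit $g$ to act by a different increasing bijection of the fractional part on each interval $[n,n+1)$. What is needed is the equivariance $g(x+k)=g(x)+k$ for all $k\in\Z$, which does not follow from ``fixes $\Z$ pointwise'' together with interval preservation, but is exactly the ``moreover'' clause of Corollary~\ref{c:Lambda-preserved} (with $\Lambda=\Z$ and $g(k)=k$); this is the identity the paper records as $(*)$ and from which it derives the formula. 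You cite Corollary~\ref{c:Lambda-preserved} earlier, but only to upgrade $\hat f(1)=1$ to $\hat f(n)=n$ for all $n$, and you never invoke the additive form at the point where it is actually needed. (Alternatively one can recover the equivariance directly from the step-isometry condition by comparing a point $x\in(0,1)$ with points of $(1,2)$ just below and just above $x+1$, but some such argument must be supplied.) Once that one line is added, the proof is complete.
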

\begin{proof}
  Trivially, the lattice $\Lambda$ generated by the unit ball is just
  the set $\Z$. Thus, by Corollary~\ref{c:isometry}, there exists an
  isometry $Q$ such that $Q\circ f$ fixes $\Z$ and, by
  Corollary~\ref{c:Lambda-preserved}, \[Q\circ f(x+k)=Q\circ
  f(x)+k\tag{$*$}\] for any $x\in \R$ and $k\in \Z$. Let $\hat
  f=Q\circ f$.  Since $\hat f$ is a step isometry and fixes both $0$
  and $1$, it must map $(0,1)$ to $(0,1)$, as must $\hat
  f^{-1}$. Hence, defining $g=\hat f|_{[0,1)}$ we see that $g$ maps
  $[0,1)$ to $[0,1)$ bijectively.  From $(*)$ we see that \[ Q\circ
  f(x)=\lfloor x\rfloor +g(x-\lfloor x\rfloor).\]

  It is immediate that $g$ is continuous (it a restriction of the
  continuous function $\hat f$), so, to complete the proof, we just
  need to show that $g$ is increasing.  Suppose that $0\le x<y<
  1$. Pick $z\in (1+x,1+y)$. We showed above that $\hat f$ maps
  $(0,1)$ to itself and similarly it also maps $(1,2)$ to itself; in
  particular, $\hat f(z)\in (1,2)$. Thus, since $\hat f$ is a step
  isometry, we have $\hat f(z)>1+\hat f(x)=1+g(x)$ and $\hat
  f(z)<1+\hat f(y)=1+g(y)$, which shows $g(x)<g(y)$ as claimed.
\end{proof}
\begin{proof}[Proof of Theorem~\ref{t:l_infinity}] By
  Corollary~\ref{c:isometry} there is an isometry $Q$ such that
  $Q\circ f$ is a step isometry fixing $\Lambda$ the lattice generated
  by the extreme points of $B$.

  By Proposition~\ref{p:ext-line=preserved-line}, the step isometry
  $Q\circ f$ preserves extreme line directions. It is obvious that the
  points $\sum_{i=1}^d e_i$ and $\sum_{i=1}^d e_i -2e_j$ are endpoints
  of an extreme line with direction $e_j$. Thus, each coordinate
  direction $e_j$ is preserved, and we see that $Q\circ f$ decomposes
  into independent actions on each coordinate direction. Each of these
  has the form specified by Lemma~\ref{l:step-isom-R}. Since $Q$ has
  the form given by Corollary~\ref{c:l-inf-isom} the result follows.
\end{proof}

\section{The Back and Forth Method in Our Setting}\label{s:bf}
A standard technique for proving infinite graphs are isomorphic is the
`back and forth' method.  As we shall use it several times in the
proof of Theorem~\ref{t:precise-main-theorem} from
Theorem~\ref{t:step-isometry}, we collect precisely what we need
here.

\begin{lemma}\label{l:bf}
  Let $V=(U\oplus\R)_\infty$ and let $S_U$ be a countable dense subset
  of $U$. Suppose that $S$ is a countable dense subset of $V$ such
  that, for each $s\in S_U$, $S\cap (\{s\}\times\R)$ is dense in
  $\{s\}\times\R$, and no two points in $S$ differ by an integer in
  their $\R$-component. Then $S$ is Rado.

Further suppose $S_0$ is any finite set
  of points in $V$ with no two points, one from $S$ and one from
  $S_0$, differing by an integer in their $\R$-components. Then, for
  two graphs $G,G'$ in $\cG_p(S\cup S_0)$, we have
  \[\Prb(G\equiv G'\ |\ G[S_0]=G'[S_0])=1
  \]
(where, as usual, $G[S_0]$ denotes the supgraph of $G$ restricted to $S_0$).
\end{lemma}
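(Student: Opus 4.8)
The plan is to run a back-and-forth argument between two generic graphs $G,G'\in\cG_p(S\cup S_0)$, building an isomorphism as an increasing union of finite partial isomorphisms. The key structural input is Theorem~\ref{t:step-isometry}: any step-isometry of $V=(U\oplus\R)_\infty$ factorises as $f_U\oplus f_\R$ with $f_U$ an isometry of $U$ and $f_\R$ a step-isometry of $\R$; moreover, by Lemma~\ref{l:bonato-janssen}, almost surely any graph isomorphism is a step-isometry on the vertex set, so it suffices to construct the isomorphism vertex-by-vertex while only controlling the \emph{integer parts} of pairwise distances (together with the $<1$ edge condition). The hypothesis that no two relevant points differ by an integer in their $\R$-component means that the floor of the distance between two points $v=(a,s)$ and $v'=(a',s')$ is determined by $\max(\|a-a'\|,|s-s'|)$, and crucially the $\R$-coordinates never land exactly on a ``boundary'' value $|s-s'|\in\Z$, so floors are robust under small perturbation.

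First I would set up the standard back-and-forth bookkeeping: enumerate $S\cup S_0$, maintain a finite partial map $\phi$ that is a bijection between finite subsets of $V$ and is a \emph{partial step-isometry} (preserves floors of pairwise norms, and the $\|\cdot\|<1$ relation), with $\phi$ the identity on $S_0$ initially (this is where $G[S_0]=G'[S_0]$ enters — it lets us start from $S_0$). At a ``forth'' step we are given a new point $v=(a,s)\in S$ to add to the domain, and must find $w\in S$ (or the appropriate target set) extending $\phi$ as a partial step-isometry. The core geometric claim is: for any finite partial step-isometry $\phi$ and any new point $v$, the set of valid images $w$ is \emph{dense} in $V$ (intersected with the relevant coordinate slices where $S$ is required to be dense). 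To see this, split the requirement: the $U$-coordinate $a$ must map to some $a'\in U$ with $\lfloor\|a'-\phi_U(b_i)\|\rfloor=\lfloor\|a-b_i\|\rfloor$ for the existing domain points $(b_i,t_i)$ — since $\phi$ already acts on $U$ as a restriction of a partial isometry and $U$-components are generic (no integer coincidences), the admissible $a'$ form an open set, hence meet the dense trace of $S$ in the needed slice; and the $\R$-coordinate $s$ must map to some $s'$ with $\lfloor\max(\|a'-\phi_U(b_i)\|,|s'-t_i'|)\rfloor$ matching — because the $\R$-components avoid integer gaps, each constraint $|s'-t_i'|\in[k_i,k_i+1)$ is an open condition, a finite intersection of which is a nonempty open subset of $\R$, and $S$ is dense there by hypothesis. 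The ``back'' step is symmetric, using that $\phi^{-1}$ is also a partial step-isometry and $S$ is dense with the same slice property. Iterating, the union $f=\bigcup\phi$ is a step-isometry bijection $S\cup S_0\to S\cup S_0$, which one checks is a graph isomorphism $G\to G'$ (here we need that the random graphs a.s.\ satisfy the conclusion of Lemma~\ref{l:bonato-janssen} in \emph{both} directions, so that ``$\|x-y\|<1$'' can be recovered from a larger event — more precisely we only ever need $f$ to preserve the $<1$ relation to conclude $G\cong G'$, but the step-isometry property plus genericity handles the $k\ge 2$ cases automatically and the $<1$ case is built into the partial-map definition).

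For the first statement ($S$ is Rado with no $S_0$), I would simply apply the second statement with $S_0=\emptyset$: then $G[S_0]=G'[S_0]$ is vacuous, so $\Prb(G\cong G')=1$, which is exactly the Rado property. (Alternatively, run the same back-and-forth from the empty partial map.)

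The main obstacle I expect is not the combinatorics of back-and-forth but verifying the \emph{nonemptiness and density} of the admissible-image set at each step — specifically, handling the $U$-component and $\R$-component simultaneously via the $\ell_\infty$ structure $\|a+s\|=\max(\|a\|,|s|)$, and making sure the genericity hypotheses (no integer differences in $\R$-components among $S\cup S_0$, $S$ dense in each slice $\{s\}\times\R$ for $s\in S_U$) are \emph{exactly} what is needed to keep all the floor-matching constraints open rather than closed. One must be slightly careful that when we choose the image $a'$ of the $U$-component we land on a point of $S_U$ (or a limit of such) so that the required density of $S$ in the fibre $\{a'\}\times\R$ is available; this is arranged by first choosing $a'\in S_U$ in an open admissible set (possible since $S_U$ is dense in $U$), then choosing $s'$ in the fibre. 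The rest is routine.
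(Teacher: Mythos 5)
There is a genuine gap at the heart of your plan: you only build a \emph{step-isometry} of $S\cup S_0$, and then assert that ``we only ever need $f$ to preserve the $<1$ relation to conclude $G\cong G'$''. That is false. Preserving the $<1$ relation makes $f$ an isomorphism of the deterministic unit-distance graph $\Gzero$, but $G$ and $G'$ are independent random subgraphs of $\Gzero$: a fixed step-isometry carries each potential edge of $G$ to a potential edge of $G'$, and the two graphs agree on a given potential edge with probability $p^2+(1-p)^2<1$, independently over infinitely many pairs, so almost surely such an $f$ is \emph{not} an isomorphism from $G$ to $G'$. (This is exactly why most dense sets are strongly non-Rado even though they admit many step-isometries.) The back-and-forth must therefore choose each new image so that its adjacency \emph{in $G'$} to the already-placed vertices matches the adjacency of the new vertex in $G$; the metric constraints you analyse only cut the candidate set down to a dense subset of an open fibre interval, and you still need a probabilistic extension property saying that, almost surely, for \emph{every} $s'\in S_U$, every open $A\subseteq\R$, and every pair of disjoint finite sets $T_1,T_2\subset S$ with $\{s'\}\times A\subset\bigcap_{x\in T_1\cup T_2}B^\circ(x,1)$, infinitely many points of $S\cap(\{s'\}\times A)$ are joined to all of $T_1$ and none of $T_2$. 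This is established by quantifying over a countable base of intervals so that only countably many configurations need to be handled almost surely; it is the essential ingredient, and Lemma~\ref{l:bonato-janssen} (which compares graph distance with norm distance for $k\ge 2$) does not supply it.

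A secondary issue concerns the $U$-component. The intended construction keeps it \emph{fixed} ($f_U=\mathrm{id}$), so each new point $u+w$ is sent into its own fibre $\{u\}\times\R$, where the hypothesis gives density of $S$; moreover $w$ then lies strictly between two consecutive points of the set $\bigcup_i(w_i+\Z)$ of already-used $\R$-values (this is where the no-integer-difference hypothesis enters), and the image is chosen in the corresponding open image interval, which keeps $f_\R$ monotone and consistent with $f_\R(x)=\lfloor x\rfloor+f_\R(x-\lfloor x\rfloor)$. Your plan to move the $U$-component to some other $a'\in S_U$ makes the floor-matching constraints genuinely two-dimensional and does not obviously keep the partial map extendable: openness of the admissible set does not give nonemptiness or density, and nothing in your scheme forces the limit of the partial $U$-maps to be an isometry of $U$, which Theorem~\ref{t:step-isometry} says any step-isometry of $V$ must induce. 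Fixing $U$ pointwise avoids all of this. Your reduction of the first statement to the case $S_0=\emptyset$ is fine (the paper runs the implication in the other direction, seeding the back-and-forth with the identity on $S_0$).
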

\begin{remark}
  Note, we do not require this to be the $\ell_\infty$-decomposition:
  for example, it also holds for
  $V=\ell_\infty^d=(\ell_\infty^{d-1}\oplus \R)_\infty$ itself. Indeed,
  we do not even need $U$ to be non-trivial: i.e., it works for
  $V=\R$.
\end{remark}
\begin{proof}
  We start by showing that almost all graphs $G$ in $\cG_p(V,S)$ have
  the following property $P$: for every point $s'\in S_U$, every open
  subset $A$ of $\R$, and every pair of disjoint finite sets
  $T_1,T_2\subset S$ such that $\{s'\}\times A\subset \bigcap_{x\in T_1\cup
    T_2}B^\circ(x,1)$, there exist infinitely many $s \in (\{s'\}\times A)\cap
  S$ such that $st\in E(G)$ for all $t\in T_1$ and $st\not\in E(G)$
  for all $t\in T_2$.

  It is obviously sufficient to prove the claim for all open sets in
  any base for $\R$. In particular, if we take a countable base, there
  are only countably many choices for $A$, $s'$, $T_1$ and $T_2$. For
  each choice there are infinitely many points in $(\{s'\}\times A)\cap
  S$. Since, each of these points has distance strictly less than one
  to each point of $T_1\cup T_2$, each such point has a positive
  probability of having the required adjacency.  Thus, almost surely,
  infinitely many of them do have the required adjacency. The claim
  follows.

  To complete the proof we show that, if $G$ and $G'$ are two graphs in
  $\cG_p(V,S)$ both having property $P$, then $G$ and $G'$ are
  isomorphic.

  Indeed, we construct our isomorphism guaranteeing that it factorises
  over $U\oplus \R$ as $f_U\oplus f_\R$ and that $f_U$ is actually the
  identity on $U$. In other words $f(u+w)=u+f_\R(w)$. Further, we
  insist that $f_\R$ is monotone and satisfies
  \[ f_\R=\lfloor x\rfloor +f_\R(x-\lfloor x\rfloor)\tag{$**$} \] (in
  fact this is essentially forced if $f_\R$ is to be a step-isometry).

  For the rest of the proof fix an enumeration $s_1,s_2,s_3,\dots$ of
  $S$. We use the back and forth method to construct the desired
  isomorphism. Start the process by mapping $s_1=u_1+w_1$ to
  itself. In particular, this defines $f_\R(w_1)=w_1$, so, by our
  requirement on $f_\R$, this defines $f_\R$ on $w_1+\Z$ by
  $f_\R(w_1+k)=w_1+k$, for all $k\in \Z$.

  Suppose that $v=u+w$ is the first point in the enumeration for which
  $f$ has not already been defined, and that $v_i=u_i+w_i$, for $1\le
  i\le n$, are the points for which $f$ has already been defined. Let
  $v_i'=f(v_i)$ for each~$i$. Consider the set of points for which we
  have already defined $f_\R$, namely $\bigcup_i (w_i+\Z)$. The
  point $w$ must lie between two consecutive of these points, say $x$
  and $y$. (It is not one of these points since we have assumed there
  are no two points differ by an integer in their $\R$-component.)

  Let $A$ be the open interval $(f_\R(x),f_\R(y))$, and let $T$ be the
  subset of the $v_i'$ that have distance strictly less than one from
  any point (equivalently, all points) of $\{u\}\times A$, and
  partition $T$ into $T_1$ and $T_2$ according to whether $v_i$ is
  joined to $v$ in $G$ or not.

  Since $G'$ has property $P$, there are infinitely many points $v'\in
  \{u\}\times A$ which are joined to everything in $T_1$ and nothing
  in $T_2$. Let $v'$ be any such point that has not already been used
  -- i.e., is not in $\{v_1',v_2',\dots,v_n'\}$ -- and set
  $f(v)=v'$. Let $w'\in \R$ be the $W$-component of $v'$ (so
  $v'=u+w'$). By our requirement on $f_\R$ this defines $f_\R$ on
  $w+\Z$ by $f_\R(w+k)=w'+k$, for all $k\in \Z$. By our choice of $v'$
  we see that $f_\R$ is still a monotone and increasing and satisfies
  $(**)$.

  We repeat this argument, but this time mapping from $G'$ to
  $G$. That is, we take the first point $v'$ in our enumeration of $S$
  that is not one of the $v_i'$ and define $f^{-1}(v')=v$ for a
  suitable point $v$ found as above but working with $f^{-1}$. 

  Thus, since as we alternate back and forth the process takes the
  first point not yet defined in $G$ or $G'$ at each stage, this
  process creates a bijection. Since we maintain the isomorphism and
  the step-isometry at each stage this bijection is an isomorphism
  (and a step isometry) as claimed.

  To prove the final part just start the process with the map $f\colon
  S_0\to S_0$ defined to be the identity which, since we are
  conditioning on $G[S_0]=G'[S_0]$, is an isomorphism.
\end{proof}

\section{Proof of Theorems~\ref{t:main-theorem}
  and~\ref{t:precise-main-theorem} from
  Theorem~\ref{t:step-isometry}}\label{s:main-theorem}

In this section we prove Theorem~\ref{t:precise-main-theorem} (which
includes Theorem~\ref{t:main-theorem}).
\begin{lemma}\label{l:countable=not-isomorphic}
  Let $V$ be a finite-dimensional normed space and $\cds$ be a
  countable dense set in $V$.

  \begin{enumerate}
    \item Suppose that there are only countably many step-isometries
    on $S$. Then $S$ is strongly non-Rado.
    \item Instead, suppose that $S$ contains a subset $T$ which
    contains infinitely many pairs of points at distance less than
    one, and the step-isometries on $S$ induce only countably many
    distinct mappings of $T$, then $S$ is strongly non-Rado.
  \end{enumerate}
\end{lemma}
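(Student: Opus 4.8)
The plan is to use the Bonato--Janssen lemma (Lemma~\ref{l:bonato-janssen}) together with a Borel--Cantelli style argument. Recall that by Lemma~\ref{l:bonato-janssen}, for almost all pairs of graphs $G,G'\in\cG_p(V,S)$, any graph isomorphism $f\colon G\to G'$ must be a step-isometry of $S$. So it suffices to show that, under either hypothesis, almost surely no step-isometry of $S$ gives an isomorphism between two independent random graphs $G,G'$.

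For part~(1), suppose $f$ is one of the countably many step-isometries of $S$. Pick a countably infinite set of pairs $\{x_i,y_i\}$ in $S$ at distance less than one (such pairs exist since $S$ is dense), and, by passing to a subsequence, arrange that the pairs $\{x_i,y_i\}$ and $\{f(x_i),f(y_i)\}$ together involve only distinct vertices -- i.e., we thin out to get an infinite \emph{matching} $M$ in $\widehat G(V,S)$ whose image under $f$ is also a matching on disjoint vertices. For a single pair, the event ``$x_iy_i\in E(G)$ iff $f(x_i)f(y_i)\in E(G')$'' has probability $p^2+(1-p)^2<1$, and over the thinned subsequence these events are independent (the edges involved are all distinct and $G,G'$ are independent). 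Hence, almost surely, for infinitely many $i$ the edge $x_iy_i$ is present in $G$ but $f(x_i)f(y_i)$ is absent in $G'$ (or vice versa), so $f$ is not an isomorphism $G\to G'$. Since there are only countably many step-isometries $f$, a countable union of null events is null, so almost surely no step-isometry is an isomorphism; combined with Lemma~\ref{l:bonato-janssen}, $G\not\equiv G'$ almost surely, i.e.\ $S$ is strongly non-Rado.

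For part~(2), the argument is essentially identical, except that we only need to find the obstruction among pairs inside $T$. By hypothesis $T$ contains infinitely many pairs at distance less than one; thin these as before to an infinite matching $M\subseteq T$ whose image under the given mapping is also a vertex-disjoint matching. The key point is that the restriction $f|_T$ of any step-isometry takes only countably many values, so although there may be uncountably many step-isometries of $S$, the relevant data (where the pairs of $M$ go) ranges over a countable set. For each of the countably many possible restrictions $g = f|_T$, the event that $G$ and $G'$ agree on the edges of $M$ under $g$ has probability zero by the same independence/Borel--Cantelli argument; take a countable union. Thus almost surely, for \emph{every} step-isometry $f$ of $S$, the induced map on $T$ fails to match up the edges of $M$ between $G$ and $G'$, so $f$ is not an isomorphism. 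Again invoking Lemma~\ref{l:bonato-janssen} gives $G\not\equiv G'$ almost surely.

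The only mildly delicate point -- and the main thing to get right -- is the thinning step: we must select an infinite subset of the pairs so that \emph{all} the vertices occurring (both the original pairs and their images) are distinct, so that the corresponding edge-presence indicator variables are genuinely mutually independent. Since each vertex lies in at most finitely many... actually since we simply need disjointness of an infinite subfamily drawn from a countable family of pairs, a greedy selection works: having chosen finitely many pairs, only finitely many vertices are excluded, and their images under the (at most countably many, in case~(1)) maps or under the single map on $T$ (case~(2)) exclude only finitely many further pairs, leaving infinitely many to choose from. In case~(1) one does this thinning once per step-isometry $f$; in case~(2) once per value of $f|_T$. Everything else is the standard second-Borel--Cantelli observation that independent events of probability bounded below by $1-(p^2+(1-p)^2)>0$ occur infinitely often almost surely.
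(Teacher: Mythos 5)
Your overall strategy is the same as the paper's: reduce via Lemma~\ref{l:bonato-janssen} to showing that no step-isometry can be an isomorphism, then kill each of the countably many candidate maps (or induced maps on $T$) by observing that agreement on infinitely many independent potential edges is a probability-zero event, and finish with a countable union. The paper phrases the final step via Fubini and a measurability remark, whereas you bound the bad event directly by a countable union of measurable null sets; that difference is cosmetic.

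There is, however, one step that does not survive scrutiny: the ``thinning to a vertex-disjoint matching.'' First, it is unnecessary. In this model edges are present independently \emph{per pair}, so the indicator variables $\mathbf{1}_{x_iy_i\in E(G)}$ and $\mathbf{1}_{g(x_j)g(y_j)\in E(G')}$ are already mutually independent provided only that the pairs $\{x_i,y_i\}$ are distinct as pairs and the pairs $\{g(x_i),g(y_i)\}$ are distinct as pairs --- both automatic since $g$ is injective --- and since $G$ and $G'$ are independent graphs. Vertex-disjointness buys nothing. Second, and more seriously, in case~(2) the thinning you describe can be \emph{impossible}: the hypothesis only guarantees that $T$ contains infinitely many pairs at distance less than one, and these could all share a common vertex (e.g.\ $T=\{x\}\cup\{y_1,y_2,\dots\}$ with $\|x-y_i\|<1$ but $\|y_i-y_j\|\ge 1$), in which case no two of the pairs are vertex-disjoint and your greedy selection finds nothing. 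Your justification (``only finitely many further pairs are excluded'') tacitly assumes each vertex lies in only finitely many of the chosen pairs, which need not hold. Since the independence you actually need holds without any thinning, the proof is repaired by simply deleting that step; but as written, the step you single out as ``the main thing to get right'' is both wrong in its rationale and unachievable for some $T$ allowed by the hypotheses.
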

\begin{proof}
Obviously the second statement gives the first statement, so it
suffices to prove that.

  Let $P$ be the property that for every pair of points $x,y\in S$ and
  every $k\in \N$ with $k\ge 2$ we have $\|x-y\|<k$ if and only
  $d_G(x,y)\le k$.  Let $\cG_0$ be the set of graphs for which
  property $P$ fails. By Lemma~\ref{l:bonato-janssen}, $\cG_0$ has
  measure zero. Any $G\not\in \cG_0$ can only be isomorphic to graphs
  in $\cG_0$ or to a graph $f(G)$ where $f$ is step-isometry of
  $S$. Obviously, if $f$ is an isomorphism between $G$ and $G'$, then
  $f|_T$ is an isomorphism between $G[T]$ and $G'[f(T)]$. Since $T$
  has infinitely many pairs of points at distance less than one, it
  has infinitely many potential edges, and the probability any
  particular mapping $f|_T$ is an isomorphism is zero. By hypothesis
  there are only countably many such mappings so the probability that
  any such mapping is an isomorphism is zero.

  To sum up, almost every $G$ is isomorphic to almost no graphs. Thus,
  by Fubini's theorem, two independent random graphs are almost surely
  not isomorphic. (The event that two graphs are isomorphic, although
  not Borel, is product measurable because it is analytic -- see
  e.g.,~\cite{MR2526093}.)
\end{proof}

Throughout the proof of Theorem~\ref{t:precise-main-theorem} we shall
use the $\ell_\infty$-decomposition. We make the following definition.
\begin{defn}
  Suppose $V$ is a normed space with $\ell_\infty$-decomposition
  $V=(U\oplus \ell^d_\infty)_\infty$. Then, for any $u\in U$, the
  \emph{fibre} over $u$ is the set $\{u+w:w\in\ell_\infty^d\}$.
\end{defn}

\begin{proof}[Proof of Theorem~\ref{t:precise-main-theorem}(\ref{e:ii})]
  Suppose $f$ is a step-isometry of $S$. By
  Proposition~\ref{p:extends}, $f$ extends to a step-isometry of
  $V$. Since $U=V$ in the $\ell_\infty$-decomposition,
  Theorem~\ref{t:step-isometry} shows that $f=f_U$ must be a
  (bijective) isometry on the whole of $V$.  By the Mazur-Ulam theorem
  this isometry is an affine map.

  Let $S'\subset S$ be an affine basis of $V$, i.e., a linear basis
  together with any one point not in its affine span.  Then, the
  affine map, $f$, is determined by its action on $S'$. Since $f$ maps
  $\cds$ to $\cds$, there are only countable many choices for the
  images of the points of $S'$. Hence, the number of such isometries
  is countable.

  This shows that the number of step-isometries on $S$ is
  countable so, by Lemma~\ref{l:countable=not-isomorphic}, $\cds$ is
  strongly non-Rado.
\end{proof}
\begin{proof}[Proof of Theorem~\ref{t:precise-main-theorem}(\ref{e:iii})]
  First suppose that no two (distinct) points $u+w, u'+w'\in S$ have
  $u=u'$ (i.e. each fibre over $U$ contains zero or one
  point). Obviously, almost all countable dense sets have this
  property. Again suppose that $f$ is a step-isometry of $S$. As
  before, it extends to a step-isometry of $V$. By
  Theorem~\ref{t:step-isometry}, $f$ factorises as $f=f_U\oplus
  f_{\ell_\infty^d}$, where $f_U$ is a (bijective) isometry
  on~$U$. Thus, by the Mazur-Ulam theorem again, $f_U$ is an affine
  map.

  Let $S'\subset S$ be a set of points $u_1+w_1,u_2+w_2,\dots,u_k+w_k$ where
  $u_i\in U$ and $w_i\in \ell_\infty^d$ for each $i$, and
  $u_1,u_2,\dots, u_k$ form an affine basis of $U$.  The map $f_U$ is
  determined by its action on $u_1,u_2,\dots,u_k$, so is determined by
  $f$'s action on $S'$. As in Part~(\ref{e:ii}), $f$ maps $\cds$ to
  $\cds$ so there are only countably many choices for the images of
  the points of $S'$. Thus the number of possible $f_U$ is countable.

  However, $f_U$ determines $f$ since, once we know the $U$-component
  of $f(s)$, the fact that $f(s)\in S$ determines the point uniquely
  (there may be no possible point but that only helps us since it
  reduces the number of potential step-isometries).  Hence, exactly as
  in the proof of Part~(\ref{e:ii}), this means there are only
  countably many such step-isometries so, again by
  Lemma~\ref{l:countable=not-isomorphic}, $S$ is strongly non-Rado.

  The fact that there are some sets $S$ that have atypical behaviour
  is immediate from Lemma~\ref{l:bf}. Indeed, write $V=(U'\oplus
  \R)_\infty$ where $U'=(U\oplus \ell_\infty^{d-1})_\infty$ then any
  $S$ of the form required by that lemma is Rado. We remark that this
  construction also works in the case $V=\ell_\infty^d$, but is not
  atypical there.

  Since our construction of sets for which the probability the graphs
  are isomorphic has probability strictly between $0$ and $1$ works
  for both Parts~(\ref{e:i}) and~(\ref{e:iii}) of the theorem we defer
  it until after our proof of Part~(\ref{e:i}).
\end{proof}

\begin{proof}[Proof of Theorem~\ref{t:precise-main-theorem}(\ref{e:i})]
  The `almost all' statement of Part~(\ref{e:i}) was proved by Bonato
  and Janssen. They showed that all countable dense sets that do not
  contain any two points differing by an integer in any coordinate are
  Rado.  (In fact, they claimed the slightly stronger result that any
  set which does not contain two points an integer distance apart is
  Rado -- but this is not true. Indeed, it is easy to construct
  counterexamples along the lines of the examples given in the next
  section.)

  The following shows that there are countable dense sets $S$ which
  are strongly non-Rado. Let $S'$ be any
  countable dense set in $\R^{d-1}$. Let $S=S'\times \Q$ in $\R^d$,
  and fix $s'\in S'$. Suppose $f$ is a step-isometry mapping on
  $S$. As usual $f$ extends to a step-isometry of $V$. Consider the
  action of $f$ on the subset $T=\{s'\}\times (\Z\cup \Z+\frac12)$ of
  the fibre $\{s'\}\times \Q$.  By Theorem~\ref{t:l_infinity} we see
  that this action is determined by the permutation $\sigma$ of the
  basis vectors, the vector $\eps$ of signs, together with the images
  $f(s',0)$ and $f(s',1/2)$. Since $f(s',0,),f(s',1/2)\in S$, there
  are only countably many choices for the step-isometry's action on
  $T$. Thus, since $T$ contains infinitely many pairs of points with
  distance less than one, Lemma~\ref{l:countable=not-isomorphic} shows
  that $S$ is strongly non-Rado.

  We deal with the case of sets where the probability that two graphs
  are isomorphic is strictly between zero and one in the following
  proposition.
\end{proof}

Finally we complete the proof of Theorem~\ref{t:precise-main-theorem}
by proving that there exist sets which are neither Rado nor strongly
non-Rado: i.e., for which the probability two graphs are isomorphic
lies strictly between zero and one.
\begin{proposition}
  Let $V=(U\oplus\R)_\infty$. Then there exist countable dense sets
  $S$ such that the probability that two random graphs taken from
  $\cG_p(V,S)$ are isomorphic lies strictly between zero and one.
\end{proposition}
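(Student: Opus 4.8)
The plan is to realise $S$ as the disjoint union of a ``generic'' Rado‐type core $R$ of the kind required by Lemma~\ref{l:bf} together with a tiny rigid gadget $F$ carrying exactly one potential edge; the isomorphism class of the random graph will then turn entirely on whether that single edge is present. Concretely, write $V=(U\oplus\R)_\infty$ and points as $(u,t)$, fix $u_0\in U$, and take
\[F=\bigl\{\,p=(u_0,0),\ q=(u_0,\tfrac12),\ a=(u_0,-1),\ b=(u_0,\tfrac52)\,\bigr\}.\]
Then $\|p-q\|=\tfrac12$, $\|p-a\|=1$, $\|q-b\|=2$, while $\|q-a\|=\tfrac32$, $\|p-b\|=\tfrac52$, $\|a-b\|=\tfrac72$; so inside $F$ the pair $\{p,q\}$ is the unique one at distance $<1$, $\{p,a\}$ the unique one at distance exactly $1$, and $\{q,b\}$ the unique one at distance exactly $2$. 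For the core I would take a countable dense $R\subseteq V$ of the form used in Lemma~\ref{l:bf} --- dense in each fibre $\{u\}\times\R$ over a countable dense $S_U\subseteq U$, with no two of its points differing by an integer in the $\R$-component --- chosen generically so that in addition $u_0\notin S_U$, no point of $R$ has $\R$-component in $\tfrac12\Z$, and $S:=R\cup F$ has no integer distances beyond those inside $F$ just listed. Since each extra requirement excludes only a meagre set (the relevant spheres and affine hyperplanes being nowhere dense), the usual alternating construction produces such an $R$.

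For the upper bound $\Prb(G\equiv G')<1$: by Lemma~\ref{l:bonato-janssen} any isomorphism $G\to G'$ is almost surely a step-isometry of $S$, which by Proposition~\ref{p:extends} extends to a step-isometry $\bar\phi$ of $V$ with $\bar\phi(S)=S$ that preserves integer distances. As $\{p,a\}$ and $\{q,b\}$ are the unique pairs of $S$ at distances $1$ and $2$, $\bar\phi$ fixes each of them setwise; a one-line check --- if $\bar\phi(p)=a$ then $\bar\phi(q)\in\{q,b\}$ would have to lie within distance $1$ of $a$, contradicting $\|q-a\|=\tfrac32$ and $\|b-a\|=\tfrac72$, and $\bar\phi(q)=b$ would force $\|p-b\|<1$ --- then forces $\bar\phi(p)=p$ and $\bar\phi(q)=q$. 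Hence $pq\in E(G)\iff pq\in E(G')$ whenever $G\equiv G'$; since $pq$ is the only potential edge inside $F$, this is exactly the event $G[F]=G'[F]$, of probability $p^2+(1-p)^2$, so $\Prb(G\equiv G')\le p^2+(1-p)^2<1$.

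For the lower bound $\Prb(G\equiv G')>0$ I would invoke the second part of Lemma~\ref{l:bf} with $R$ in the role of ``$S$'' and $F$ in the role of ``$S_0$'' --- legitimate because no point of $R$ differs from a point of $F$ by an integer in the $\R$-component --- which gives $\Prb(G\equiv G'\mid G[F]=G'[F])=1$ for $G,G'\in\cG_p(V,S)$, while $\Prb(G[F]=G'[F])=p^2+(1-p)^2>0$. Combining the two estimates in fact pins the value down: $\Prb(G\equiv G')=p^2+(1-p)^2\in(0,1)$. (As in Lemma~\ref{l:countable=not-isomorphic}, the event $\{G\equiv G'\}$ is analytic, hence product measurable, so these manipulations are legitimate.)

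The only genuine obstacle is the rigidity input in the second paragraph: one must arrange the generic choice of $R$ so that $\{p,a\}$ and $\{q,b\}$ really are the \emph{unique} integer-distance pairs of $S$ of their respective lengths, while simultaneously keeping $R$ in exactly the shape that Lemma~\ref{l:bf} demands. Once that coexistence of constraints is secured, the remainder is the short case-check above. It is worth noting that, unlike the proofs of the other parts of Theorem~\ref{t:precise-main-theorem}, this argument does not use Theorem~\ref{t:step-isometry} at all --- only preservation of integer distances and the back-and-forth lemma.
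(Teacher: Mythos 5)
Your proof is correct and follows essentially the same route as the paper's: a four-point gadget on a single fibre carrying one potential edge, rigidified by uniquely-occurring integer distances so that every step-isometry preserves that edge, glued to a Lemma~\ref{l:bf}-type core whose second part supplies the conditional isomorphism, yielding probability exactly $p^2+(1-p)^2$. The only (immaterial) difference is that the paper's gadget $\{0,u,3u/2,5u/2\}$ uses two unit-distance pairs to fix the edge setwise, whereas yours uses a distance-$1$ and a distance-$2$ pair to fix its endpoints individually.
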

\begin{remark}
  Again, we do not require this to be the $\ell_\infty$-decomposition:
  for example, it holds for
  $V=\ell_\infty^d=(\ell_\infty^{d-1}\oplus \R)_\infty$ and for $V=\R$.
\end{remark}

\begin{proof}
  The key idea is to find a set $S$ with some finite subset $S_0$ such
  that all step-isometries map $S_0$ to $S_0$. If we do this, then an
  obvious necessary condition for two graphs $G$ and $G'$ to be
  isomorphic via a step-isometry is that $G[S_0]$ is isomorphic to
  $G'[S_0]$, which is an event with probability strictly between zero
  and one, provided $S_0$ contains at least one possible edge.

  Of course, that is just a necessary condition; to find a set $S$
  with the desired property we wish to make this a sufficient
  condition for the existence of such an isomorphism.

  One natural possibility is to let $S_0$ be two points that are the
  unique pair of points at unit distance in $S$. Since step-isometries
  preserve integer distances any step-isometry must map $S_0$ to
  $S_0$. However, $S_0$ does not contain any potential edge. Instead,
  fix a unit vector $u$ and let $S_0=\{0,u,3u/2,5u/2\}$. Provided
  $0,u$ and $3u/2,5u/2$ are the only pairs of points at unit distance
  in $S$, then $S_0$ must map to itself. Moreover, $S_0$ contains a
  unique possible edge (i.e., pair at distance strictly less than
  one): that between the points $u$ and $3u/2$, and we see that any
  step-isometry must map these two points to themselves.

  Having found our set $S_0$ we turn to defining $S$, which we do as
  in Lemma~\ref{l:bf} -- we just add the requirements that no point of
  $S$ is at unit distance from any point in $S\cup S_0$.

  As discussed above all step isometries map the set $\{u,3u/2\}$ to
  itself and, in particular, a necessary condition for $G$ and $G'$
  to be isomorphic via a step-isometry is that they agree on the
  potential edge $u,3u/2$. (As Lemma~\ref{l:bonato-janssen} shows that
  the probability two graphs are isomorphic via a function which is
  not a step-isometry is zero, we can ignore this possibility.)

  Conversely, if they agree on this edge then $G[S_0]=G'[S_0]$ so, by
  Lemma~\ref{l:bf} they are almost surely isomorphic.

  Thus, the probability that $G$ and $G'$ are isomorphic is the
  probability that they agree on the edge $u,3u/2$ which is
  $p^2+(1-p)^2$; in particular it is strictly between zero and one.
\end{proof}

\section{Further Results and Open Problems}

We have not completely classified the
behaviour of all countable dense sets in the cases~(\ref{e:i})
and~(\ref{e:iii}) above, and that is our main open question
\begin{question}
  Let $V$ be a normed space with $\ell_\infty$-decomposition
  $V=(U\oplus \lid)_\infty$ for some $d\ge 1$. Which countable dense
  sets are Rado?
\end{question}
It is easy to extend the argument for the typical case of
Part~(\ref{e:iii}) above to show that, in that setting, if each fibre
over $U$ contains a discrete set (rather than just zero or one points
as above), then the set is strongly non-Rado.  Thus, the open cases
include cases where a fibre is neither dense nor discrete.

However, since the behaviour when all fibres are discrete (strongly
non-Rado) is different from the case when all fibres are dense (Rado
-- assuming some no integer difference conditions) it is unsurprising
that sets with some fibres discrete and some fibres dense can give
either behaviour. We briefly outline two sets which look very similar
but have different behaviour. The examples we give are in in
$V=(U\oplus\R)_\infty$ but it is easy to generalise them to either
$(U\oplus\lid)_\infty$ or (with slightly more effort along the lines
of the proof of the atypical case of Part~(\ref{e:i}) above) to $\lid$.

Let $S_U$ be a dense set in $U$, and let $S$ be a set which is dense
in each fibre over $S_U$, and contains no two points differing by an
integer in their $\R$-component. (So far this is exactly the set used
in the atypical case of Part~(\ref{e:iii}) above.)

Now let $T_U$ be an infinite one separated family in $U$ disjoint from
$S_U$, and let $T$ be a set containing exactly one point from each
fibre over $T_U$, such that no two points in $S\cup T$ differ by an
integer in their $\R$-component.

We claim that, by choosing the single points in each fibre of $T$, we
can ensure that $S\cup T$ is Rado, or that it is strongly
non-Rado. Suppose that $T$ is the set $\{(t_1,r_1),(t_2,r_2),\dots\}$.

As usual, any step-isometry $f$ of $S\cup T$ extends to a
step-isometry of $V$, which factorises as $f_U\oplus f_\R$ where $f_U$
is an isometry and $f_\R$ is a step-isometry. Obviously $f_U$ maps $T$
to itself (as all other fibres contain either no points or infinitely
many points). Thus, once we know the $U$-component of the image $f(t)$
of a point $t\in T$, we know its $\R$-component; i.e., $f_U$
determines $f_\R(r_i)$ for each $i$. If the $r_i \mod 1$ are dense in
$[0,1]$ then, since $f_\R$ is a step-isometry this determines $f_\R$
entirely. As in our proofs above there are only countably many
step-isometries mapping $S\cup T$ to itself so, by
Lemma~\ref{l:countable=not-isomorphic}, $S\cup T$ is strongly
non-Rado.

On the other hand if the $r_n=n+1/n$ and no point of $S$ has integer
$\R$-component then $S\cup T$ is Rado. Indeed, we construct our map
fixing $U$ and use the `back and forth' argument as in Lemma~\ref{l:bf}
observing that the key property used there -- that for every point
$(u,w)\in S\cup T$ not yet mapped the point $w$ lies in an open
interval between consecutive previously defined points -- still holds
in this case.

The above discussion shows that the classification of exactly which
countable dense sets give a unique graph will be rather
complicated. Thus we have restricted ourselves to the `typical' case
and showing that the atypical cases can occur.

Finally, all our work in this paper has been finite-dimensional spaces
without consideration for the infinite-dimensional setting. It would
be interesting to know what happens there.
\begin{question}
  Suppose that $V$ is an infinite-dimensional normed space, and that
  $S$ is a countable dense subset. When is $S$ Rado?
\end{question}
 \bibliography{mybib}{}
\bibliographystyle{abbrv}
\end{document}